\newtheorem{theorem}{Theorem}[section]
\newtheorem{proposition}[theorem]{Proposition}
\newtheorem{lemma}[theorem]{Lemma}
\newtheorem{warning}[theorem]{Warning}
\newtheorem{corollary}[theorem]{Corollary}
\newtheorem{notation}[theorem]{Notation}
\theoremstyle{definition}
\newtheorem{definition}[theorem]{Definition}
\theoremstyle{remark}
\newtheorem{remark}[theorem]{Remark}
\newcommand{\RNum}[1]{\uppercase\expandafter{\romannumeral #1\relax}}
\newcommand{\ZZ}{\mathbb{Z}}
\newcommand{\NN}{\mathbb{N}}
\newcommand{\QQ}{\mathbb{Q}}
\newcommand{\RR}{\mathbb{R}}
\newcommand{\FF}{\mathbb{F}}
\newcommand{\diag}{\textup{diag}}
\newcommand{\Min}{\textup{Min}}
\newcommand{\BW}{\textup{BW}}
\newcommand{\SBW}{\textup{sBW}}
\newcommand{\Dcyc}{{\textup{D}}^{\textup{(cyc)}}}
\newcommand{\GL}{\textup{GL}}
\newcommand{\Aut}{\textup{Aut}}
\newcommand{\supp}{\textup{supp}}
\newcommand{\wt}{\textup{wt}}
\newcommand{\Aff}{\textup{Aff}}
\newcommand{\Spin}{\textup{Spin}}
\newcommand{\GU}{\Gamma \textup{U}}
\newcommand{\SO}{\textup{SO}}
\newcommand{\so}{\textup{so}}
\newcommand{\dist}{\textup{dist\,}}
\newcommand\nc\newcommand
\nc\bfa{{\boldsymbol a}}\nc\bfA{{\bf A}}\nc\cA{{\mathcal A}}
\nc\bfb{{\boldsymbol b}}\nc\bfB{{\bf B}}\nc\cB{{\mathcal B}}
\nc\bfc{{\boldsymbol c}}\nc\bfC{{\bf C}}\nc\cC{{\mathcal C}}
\nc\bfd{{\boldsymbol d}}\nc\bfD{{\bf D}}\nc\cD{{\mathcal D}}
\nc\bfe{{\boldsymbol e}}\nc\bfE{{\bf E}}\nc\cE{{\mathcal E}}
\nc\bff{{\boldsymbol f}}\nc\bfF{{\bf F}}\nc\cF{{\mathcal F}}
\nc\bfg{{\boldsymbol g}}\nc\bfG{{\bf G}}\nc\cG{{\mathcal G}}
\nc\bfh{{\boldsymbol h}}\nc\bfH{{\bf H}}\nc\cH{{\mathcal H}}
\nc\bfi{{\boldsymbol i}}\nc\bfI{{\bf I}}\nc\cI{{\mathcal I}}
\nc\bfj{{\boldsymbol j}}\nc\bfJ{{\bf J}}\nc\cJ{{\mathcal J}}
\nc\bfk{{\boldsymbol k}}\nc\bfK{{\bf K}}\nc\cK{{\mathcal K}}
\nc\bfl{{\boldsymbol l}}\nc\bfL{{\bf L}}\nc\cL{{\mathcal L}}
\nc\bfm{{\boldsymbol m}}\nc\bfM{{\bf M}}\nc\cM{{\mathcal M}}
\nc\bfn{{\boldsymbol n}}\nc\bfN{{\bf N}}\nc\cN{{\mathcal N}}
\nc\bfo{{\boldsymbol o}}\nc\bfO{{\bf O}}\nc\cO{{\mathcal O}}
\nc\bfp{{\boldsymbol p}}\nc\bfP{{\bf P}}\nc\cP{{\mathcal P}}
\nc\bfq{{\boldsymbol q}}\nc\bfQ{{\bf Q}}\nc\cQ{{\mathcal Q}}
\nc\bfr{{\boldsymbol r}}\nc\bfR{{\bf R}}\nc\cR{{\mathcal R}}
\nc\bfs{{\boldsymbol s}}\nc\bfS{{\bf S}}\nc\cS{{\mathcal S}}
\nc\bft{{\boldsymbol t}}\nc\bfT{{\bf T}}\nc\cT{{\mathcal T}}
\nc\bfu{{\boldsymbol u}}\nc\bfU{{\bf U}}\nc\cU{{\mathcal U}}
\nc\bfv{{\boldsymbol v}}\nc\bfV{{\bf V}}\nc\cV{{\mathcal V}}
\nc\bfw{{\boldsymbol w}}\nc\bfW{{\bf W}}\nc\cW{{\mathcal W}}\nc\sW{{\mathscr W}}
\nc\bfx{{\boldsymbol x}}\nc\bfX{{\boldsymbol X}}\nc\cX{{\mathcal X}}\nc\sX{{\mathscr X}}
\nc\bfy{{\boldsymbol y}}\nc\bfY{{\boldsymbol Y}}\nc\cY{{\mathcal Y}}\nc\sY{{\mathscr Y}}
\nc\bfz{{\boldsymbol z}}\nc\bfZ{{\boldsymbol Z}}\nc\cZ{{\mathcal Z}}\nc\sZ{{\mathscr Z}}
\title{Strongly perfect lattices sandwiched between Barnes-Wall lattices}
\author{Sihuang Hu \thanks{husihuang@gmail.com, Humboldt fellow supported by the AvH foundation.}, Gabriele Nebe
\thanks{nebe@math.rwth-aachen.de} }
\begin{document}

\maketitle

{\small 
{\sc Abstract.} 
New series of $2^{2m}$-dimensional universally strongly perfect lattices 
$\Lambda_I $ and $\Gamma_J $ are constructed
with
$$2\BW_{2m} ^{\#}  \subseteq \Gamma _J \subseteq \BW_{2m} \subseteq \Lambda _I \subseteq \BW _{2m}^{\#}  .$$
The lattices are found by 
restricting the spin representations of the automorphism group
of the Barnes-Wall lattice to its subgroup 
${\mathcal U}_m:={\mathcal C}_m (4^H_{\bf 1}) $. 
The group ${\mathcal U}_m$ is 
the Clifford-Weil group associated to the Hermitian self-dual 
codes over $\FF _4$ containing ${\bf 1}$, so the ring of
polynomial invariants of ${\mathcal U}_m$ is spanned by 
the genus-$m$ complete weight enumerators of such codes. 
This allows us to show that all the ${\mathcal U}_m$ invariant
lattices are universally strongly perfect.
We introduce a new construction, $\Dcyc $, 
for chains of (extended) cyclic codes to
obtain (bounds on) the minimum of the new lattices. 
}

\section{Introduction}

The famous Barnes-Wall lattices $\BW _{2m}$ of dimension $2^{2m}$ (with $m\in \NN$) form an 
important infinite family of even lattices. 
They have several constructions allowing to determine 
discriminant group and minimum
 $$\BW_{2m}^{\#} / \BW _{2m} \cong \FF_2 ^{2^{2m-1}} ,  \
\min (\BW_{2m}) = 2^{m} ,$$  and even the kissing number and the
shortest vectors in a very explicit way \cite{BarnesWall}, \cite{BroueEnguehard}. 
Also their automorphism groups 
$${\mathcal G}_{2m} := \Aut (\BW_{2m}) \cong 2^{1+4m}_+ . O_{4m}^+(2) $$ 
are of relevance in  various places: 
\\
The groups ${\mathcal G}_{2m} $ are maximal finite subgroups of 
$\GL _{2^{2m}} (\QQ )$ all of whose invariant lattices are 
scalar multiples of $\BW _{2m}$ and its dual $\BW_{2m}^{\#} $. 
The lattice $\BW_{2m}$ is 2-modular
in the sense of \cite{Quebbemann}, 
i.e. there is a  similarity $h$ of norm $1/2$ with 
$h(\BW_{2m} ) = \BW_{2m}^{\#} $. Then $h$ is in the 
normalize of ${\mathcal G}_{2m}$ in $\GL _{2^{2m}} (\QQ )$
(see \cite{norm}). 
The group ${\mathcal C}_{2m} := 
{\mathcal G}_{2m} . \langle \sqrt{2} h \rangle $
is the real Clifford group  (see \cite{CliffordNRS}) 
whose ring of invariant polynomials is spanned by the 
genus $2m$ complete weight enumerators of self-dual binary codes. 
This identification is used in \cite{Bachoc} to deduce that 
all layers of the Barnes-Wall lattices form spherical 6-designs,
showing that the Barnes-Wall lattices are universally strongly perfect lattices.
In particular $\BW_{2m}$ realizes a local maximum of the density function
on the space of all similarity classes of $2^{2m}$-dimensional lattices
(see \cite{Venkov}). 
In the present paper we construct new infinite series of lattices 
$\Lambda_I $ and $\Gamma_J $ with 
$$2\BW_{2m} ^{\#}  \subseteq \Gamma _J \subseteq \BW_{2m} \subseteq \Lambda _I \subseteq \BW _{2m}^{\#}  $$
for subsets $I,J\subseteq \{ 0,\ldots  , m \}$ such that 
 $m-i$ is odd and $m-j$ is even for all $i\in I$, $j\in J$.
We call them \emph{sandwiched} lattices, as 
they are sandwiched between two Barnes-Wall lattices. 
For $m\geq 3$ the densest of these lattices 
is $\Lambda _{I_0}$  for 
$I_0 :=  \{m-i \mid m\geq i\geq 3, i \mbox{ odd }  \} $, 
whose minimum is the same as $\min (\BW _{2m})$; in particular 
these lattices are denser than the Barnes-Wall lattices.

To find these lattices we consider the sandwiched lattices that 
are invariant under the subgroup 
$$ {\mathcal C}_{m}(4^{H}_{\bf 1}) =
2^{1+4m}_+ . \GU _{2m} (\FF _4) =: {\mathcal U}_{m} \leq {\mathcal G}_{2m} .$$
The group ${\mathcal U}_m$ is the genus-$m$ Clifford-Weil group 
${\mathcal C}_{m}(4^{H}_{\bf 1}) $
associated to the Type of Hermitian self-dual codes over $\FF_ 4$ that 
contain the all ones vector. 
As in \cite{Bachoc} the invariant theory of this Clifford-Weil group
allows to predict that all its invariant lattices are universally 
strongly perfect (see Section \ref{usp} for more details). 
To obtain some information about these lattices, we restrict the 
spin representations $\BW_{2m} ^{\#} / \BW_{2m }$ respectively 
$\BW_{2m} / 2\BW_{2m}^{\#} $ of the orthogonal group $O_{4m}^+(\FF _2)$ 
to its subgroup $\GU_{2m} (\FF _4)$. 
It turns out that these restrictions are both  multiplicity free and all their 
composition factors are absolutely irreducible self-dual modules, 
$Y_k$ ($k\in \{0,\ldots , m\}$, $m-k$ odd respectively even).
Theorem \ref{latuni} gives a parametrization of the 
${\mathcal U}_m$ invariant sandwiched lattices.  
In particular for $m=2$ we discover a new pair of universally strongly 
perfect lattices $\Gamma _{\{ 2 \}}$ and 
$2\Gamma _{\{2 \} }^{\#} = \Gamma _{\{ 0 \}} $ in dimension 16
thus adding the first new  entry to \cite[Tableau 19.1]{Venkov} which 
was created 20 years ago. 

One way to construct $\BW_{2m}$ is by applying Construction D
to a  suitable basis of a 
chain of Reed-Muller codes. The Reed-Muller codes are extended cyclic
codes for which the minimum distance is obtained by the well known BCH bound. 
This cyclic permutation, say  $\sigma $, plays the key role 
in constructing and identifying 
the ${\mathcal U}_m$ invariant sandwiched lattices. 
It defines an automorphism of all Reed-Muller codes of the given length 
and also of the Barnes-Wall lattices, more precisely 
$$\sigma \in {\mathcal U}_m \subseteq {\mathcal G}_{2m} .$$
The eigenvalues of $\sigma $ on the simple ${\mathcal U}_m$ modules $Y_k$ 
indicate which chains of extended cyclic overcodes 
of the Reed-Muller codes we need to take to  
obtain the ${\mathcal U}_m$ invariant sandwiched lattices from Theorem \ref{latuni}.

The main problem of Construction D is that it depends not only
on the chain of codes but also on the choice of suitable bases. 
For chains of (extended) cyclic codes over prime fields, however, 
there is a unique way, which we call Construction $\Dcyc $, to 
define a lattice that is again invariant under the cyclic permutation
(see Section \ref{constAD}).
This construction also yields (lower bounds on) the minimum of 
the lattices $\Gamma _J$ and $\Lambda _I$ (Theorems \ref{sandlatdetails} and
\ref{GammaequalsL}).

\section{Preliminaries} 

\subsection{Cyclic codes}\label{Sec:cyc} 

Let $q$ be a prime power and $n$ some positive integer prime to $q$.
Cyclic codes ${\mathcal C}$
are ideals in the finite ring ${\mathcal M}:=\FF _q[X]/(X^n-1)$.
We identify ${\mathcal M}$ with $\FF _q^n$ using the 
classes of $1,X,\ldots , X^{n-1} $ as a basis. Then 
the  multiplication by $X$ acts on ${\mathcal M}$ as a cyclic permutation 
$\sigma   $. 
In particular the eigenvalues of $\sigma   $ on ${\mathcal M}$ 
(or more precisely $ \overline{\FF _{q}} \otimes _{\FF _q} {\mathcal M} =: 
\overline{\FF _{q}} {\mathcal M}$) are all 
$n$-th roots of unity in the algebraic closure of $\FF _q$, say 
the elements of 
$ {\mathcal Z} := \{ \alpha ^u \mid 0\leq u < n \} $ 
for some primitive $n$-th root of unity $\alpha \in \overline{\FF _{q}} $. 

Based on these data there are (at least) three descriptions of
a given cyclic code ${\mathcal C}$. 
\begin{itemize}
	\item The \emph{generator polynomial} $p = p({\mathcal C})$ 
		which is the monic divisor of $X^n-1$ such that 
the classes of 
$ p, Xp , \ldots , X^{d-1} p $ form a basis of ${\mathcal C} $,
where $d$ is the degree of $(X^n-1)/p$. 
\item The \emph{zero set}  $Z({\mathcal C})$ 
	which is the subset of ${\mathcal Z} $ such that 
	$(c_0,\ldots , c_{n-1}) \in {\mathcal C}$, 
	if and only if $\sum_{i=0}^{n-1} c_i z^i = 0 $ 
	for all $z\in Z({\mathcal C}) $. 
	\item The \emph{eigenvalues} $\Theta ({\mathcal C}) $
		which is the set of eigenvalues of $\sigma   $
		in the $\overline{\FF_q} [\sigma   ] $-module 
		$\overline{\FF_q} {\mathcal C} \leq \overline{\FF_q} {\mathcal M} $.
\end{itemize} 

Clearly we may specify a cyclic code by either of the 
three data, which are related according to the following remark. 

\begin{remark} \label{gencyc}
	$\Theta (C)  = {\mathcal  Z} \setminus Z (C) $, $Z (C)  = {\mathcal Z}\setminus \Theta(C) $, and 
	$Z(C)  = \{ z\in {\mathcal Z} \mid p(z) = 0 \} $ 
	where  $p:= p({\mathcal C})$.
\end{remark}

One important feature of cyclic codes is the fact that 
one can read off a lower bound,
the so called BCH bound, 
on the minimum Hamming distance $\dist (\cC )$.

\begin{theorem} (see \cite[Chapter 7, Theorem 8]{MacWilliamsSloane})\label{BCH}
	Let ${\mathcal C} \leq \FF _q^n$ be a cyclic code.
	Assume that there is some primitive $n$-th root of unity $\alpha \in \overline{\FF _{q}} $ and some $b \geq 0$, $ n\geq \delta \geq 1$ such that 
	$$\{ \alpha ^b , \alpha ^{b+1} ,\ldots , \alpha ^{b+\delta-2} \} 
	\subseteq Z({\mathcal C}) .$$
	Then the minimum Hamming distance $\dist({\mathcal C} )$ 
	of ${\mathcal C}$ is 
	at least $ \delta $. 
\end{theorem}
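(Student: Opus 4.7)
The plan is to prove the contrapositive via a Vandermonde-type argument, which is the classical route to the BCH bound.

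First I would suppose, for contradiction, that there exists a nonzero codeword $c = (c_0,\ldots,c_{n-1}) \in {\mathcal C}$ whose Hamming weight $w = |\supp(c)|$ satisfies $w \leq \delta - 1$. Write $\supp(c) = \{i_1 < i_2 < \cdots < i_w\} \subseteq \{0,1,\ldots,n-1\}$. By the definition of the zero set, we have $\sum_{i=0}^{n-1} c_i z^i = 0$ for every $z \in Z({\mathcal C})$, and in particular for each of the $\delta - 1 \geq w$ elements $z = \alpha^b, \alpha^{b+1}, \ldots, \alpha^{b+\delta-2}$.

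Next I would package the first $w$ of these vanishing conditions into a linear system in the unknowns $c_{i_1}, \ldots, c_{i_w}$, namely
\[
\sum_{k=1}^{w} c_{i_k} (\alpha^{b+j})^{i_k} = 0, \qquad j = 0, 1, \ldots, w-1.
\]
The coefficient matrix $M = \bigl((\alpha^{b+j})^{i_k}\bigr)_{j,k}$ factors as $M = V \cdot D$, where $D = \diag(\alpha^{b \cdot i_1}, \ldots, \alpha^{b \cdot i_w})$ is an invertible diagonal matrix (since $\alpha$ has order $n$) and $V = \bigl((\alpha^{i_k})^{j}\bigr)_{j,k}$ is a Vandermonde matrix in the nodes $\alpha^{i_1}, \ldots, \alpha^{i_w}$.

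The key step is then to observe that the nodes $\alpha^{i_1},\ldots, \alpha^{i_w}$ are pairwise distinct: this follows because $\alpha$ is a primitive $n$-th root of unity and $0 \le i_1 < \cdots < i_w < n$. Hence $\det V = \prod_{k<\ell}(\alpha^{i_\ell} - \alpha^{i_k}) \neq 0$ in $\overline{\FF_q}$, so $M$ is invertible. This forces $c_{i_1} = \cdots = c_{i_w} = 0$, contradicting the assumption that $c$ was nonzero. Therefore every nonzero codeword has weight at least $\delta$, i.e.\ $\dist({\mathcal C}) \geq \delta$.

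The only real subtlety is making sure the nodes are distinct over $\overline{\FF_q}$, which is exactly where primitivity of $\alpha$ and the fact that $\gcd(n,q)=1$ (so $X^n-1$ has $n$ distinct roots) enter; the rest is pure linear algebra over $\overline{\FF_q}$. I do not anticipate any real obstacle — this is essentially the textbook argument, reproduced for completeness.
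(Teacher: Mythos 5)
Your argument is correct and is the standard Vandermonde proof of the BCH bound; the factorization $M = VD$, the distinctness of the nodes $\alpha^{i_1},\dots,\alpha^{i_w}$ from primitivity of $\alpha$ and $0\le i_1<\cdots<i_w<n$, and the resulting contradiction are all in order. The paper itself offers no proof of this theorem, citing \cite[Chapter 7, Theorem 8]{MacWilliamsSloane} instead, so there is nothing to compare against; your write-up reproduces precisely the textbook argument that reference contains. (One cosmetic remark: the invertibility of $D$ needs only $\alpha\neq 0$, not that $\alpha$ has order $n$; primitivity is what makes the Vandermonde nodes distinct, which you correctly invoke in the next step.)
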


For any ring $R$ 
the \emph{extended code} of a code ${\mathcal C} \leq R^n $ 
is defined as the code 
$$\{ (c_1,\ldots ,c_n , -\sum _{i=1}^n c_i ) \mid (c_1,\ldots , c_n) \in 
{\mathcal C} \} \leq R^{n+1} .$$
The projection on the first $n$ coordinates is an isomorphism 
between the extended code and the code. 
For cyclic codes, 
one extends  the action of $\sigma   $ to the $n+1$ coordinates by 
$\sigma   (n+1) = n+1 $; then the isomorphism above 
is an $R[\sigma ]$-module isomorphism, in particular 
for codes over fields, the eigenvalues of $\sigma   $ on 
${\mathcal C}$ and its extended code coincide.

\subsection{Chains of cyclic codes and cyclic codes over chain rings}\label{ccc}

Let $q=p^f$ be some power of a prime $p$, $m\in \NN $ and 
$R:= GR(p^m,f) $ denote the Galois ring with $R/pR \cong \FF_q$ 
and characteristic $p^m$. 
Let $n\in \NN $ be not divisible by $p$. 
Then the polynomial 
$$X^n-1  = f_1 f_2 \cdots f_s $$ is a 
product of pairwise distinct monic irreducible polynomials $f_j \in \FF _q[X]$. 
By Hensel's lemma (see also \cite{KL} for a more specific reference)
there are unique monic irreducible 
polynomials $F_j \in R [X ]$ such that 
$$X^n-1 = F_1 F_2 \cdots F_s \in R[X] \mbox{ and } F_j \pmod{p} = f_j .$$
Any chain 
$$(\cC_\star ) \ :  \ \cC _0 = (p_0 ) \subseteq \cC _1 = (p_1) \subseteq \ldots \subseteq 
\cC _{m-1} = (p_{m-1}) \leq \FF_q[X]/(X^n-1) \cong \FF _q^n $$
of cyclic codes is given by a sequence of generator polynomials
$$p_{m-1} \mid  p_{m-2} \mid \ldots \mid p_{1} \mid p_0 \mid (X^n-1) \in \FF _q[X] .$$
Let $P_j \in R[X] $ be the monic divisor of $X^n-1$ that lifts $p_j$. 
Then we define the lift of $(\cC _\star )$ to be the ideal 
$$\widehat{(\cC _\star )} := ( p^j P_j \mid j=0,\ldots , m-1 ) \leq R[X]/(X^n-1) \cong R^n.$$
We can recover the sequence $(\cC_\star )$ from $\widehat{(\cC _\star )} $ by defining 
$\widehat{(\cC _\star )} _j := \widehat{(\cC _\star )} \cap p^j R^n .$
Then
\begin{align} \label{quotC}
\cC _j = \{ (c_1 + pR ,\ldots , c_n + pR ) \mid (p^j c_1,\ldots , p^j c_n) \in \widehat{(\cC _\star) }_j \} \cong 
\frac{\widehat{(\cC _\star) }_j}{\widehat{(\cC _\star) }_{j+1}}
\end{align}
Hence we conclude

\begin{remark}\label{chainbijection} 
Cyclic codes in $R^n$ 
are in bijection to the chains of length $m$ of 
cyclic codes in $\FF_q^n$.
\end{remark}

As before we denote by $\sigma $ the cyclic shift induced by multiplication 
by $X$ on $\FF_q[X]/(X^n-1)$ and on $R[X]/(X^n-1)$. Then
$\FF_q[\sigma ] \cong \FF_q[X]/(X^n-1)$ is a semisimple algebra.

\begin{lemma}  \label{eigenquotgen}
	Assume that we are given two sequences  $(\cC _{\star }) : \ (\cC _i)_{i=0}^{m-1}$ 
	and $(\cD _{\star }) : \ (\cD _i)_{i=0}^{m-1}$
	of cyclic codes 
such that $$\cC _i \subseteq \cD _i \subseteq \cC_{i+1}  $$ 
for all $i$. 
Then 
$$p \widehat{(\cD _\star )}  \subseteq 
\widehat{(\cC _\star )} \subseteq \widehat{(\cD _\star )} \subseteq R^n $$ 
and 
for all $j=0,\ldots , m-1 $
$$\frac{\widehat{(\cD _\star)}_j}{\widehat{(\cC _\star)}_j} \cong 
\frac{\cD_j}{\cC _j}\oplus \frac{\cD_{j+1}}{\cC _{j+1}} \oplus\cdots \oplus \frac{\cD_{m-1}}{\cC_{m-1}}$$
as $\FF_q[\sigma ]$-modules.
\end{lemma}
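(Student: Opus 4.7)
The plan is to first establish the inclusions at the level of generator polynomials, and then obtain the direct sum decomposition by exploiting the semisimplicity of $\FF_q[\sigma]$.

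Write $p_i, q_i \in \FF_q[X]$ for the generator polynomials of $\cC_i, \cD_i$ and $P_i, Q_i \in R[X]$ for their Hensel lifts dividing $X^n-1$. The hypothesis $\cC_i \subseteq \cD_i \subseteq \cC_{i+1}$ translates to $p_{i+1} \mid q_i \mid p_i$ in $\FF_q[X]$, and by uniqueness of the Hensel factorization of $X^n-1$ in $R[X]$ this lifts to $P_{i+1} \mid Q_i \mid P_i$. From $P_j \in (Q_j)$ we read off $p^j P_j \in \widehat{(\cD_\star)}$; from $Q_j \in (P_{j+1})$ we get $p \cdot p^j Q_j = p^{j+1} Q_j \in (p^{j+1} P_{j+1}) \subseteq \widehat{(\cC_\star)}$. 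This gives both inclusions $p\widehat{(\cD_\star)} \subseteq \widehat{(\cC_\star)} \subseteq \widehat{(\cD_\star)}$.

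Next, set $A_j := \widehat{(\cC_\star)}_j$ and $B_j := \widehat{(\cD_\star)}_j$. Because $pB_j \subseteq \widehat{(\cC_\star)} \cap p^{j+1} R^n = A_{j+1} \subseteq A_j$, the quotient $B_j/A_j$ is annihilated by $p$ and so inherits a natural structure of module over $\FF_q[\sigma] = \FF_q[X]/(X^n-1)$. Since $\gcd(n,p) = 1$, the polynomial $X^n-1$ has pairwise distinct irreducible factors, so this algebra is semisimple; every short exact sequence of $\FF_q[\sigma]$-modules therefore splits. This is the key ingredient that upgrades the extension I am about to construct into a direct sum.

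I would then build the short exact sequence
\begin{equation*}
0 \longrightarrow B_{j+1}/A_{j+1} \longrightarrow B_j/A_j \longrightarrow \cD_j/\cC_j \longrightarrow 0
\end{equation*}
of $\FF_q[\sigma]$-modules. Injectivity of the first arrow follows from $B_{j+1} \cap A_j = A_j \cap p^{j+1} R^n = A_{j+1}$. For the right arrow, compose the isomorphism $B_j/B_{j+1} \cong \cD_j$ from \eqref{quotC} with $\cD_j \twoheadrightarrow \cD_j/\cC_j$; the image of $A_j$ under the first surjection is $A_j/(A_j \cap B_{j+1}) = A_j/A_{j+1}$, which \eqref{quotC} identifies with $\cC_j \subseteq \cD_j$. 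Hence the composite kills $A_j + B_{j+1}$ and induces a surjection $B_j/A_j \twoheadrightarrow \cD_j/\cC_j$ with kernel $(A_j + B_{j+1})/A_j \cong B_{j+1}/A_{j+1}$. Semisimplicity gives a splitting $B_j/A_j \cong \cD_j/\cC_j \oplus B_{j+1}/A_{j+1}$, and downward induction on $j$ with base $B_m = A_m = 0$ (since $R$ has characteristic $p^m$) yields the claimed decomposition.

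The main non-routine step is recognizing that the $p$-annihilation of the quotients, together with $\gcd(n,p)=1$, turns the a priori $R[\sigma]$-module extension into a split sequence of $\FF_q[\sigma]$-modules; the remaining work is a short diagram chase combined with repeated use of \eqref{quotC}.
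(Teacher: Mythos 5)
Your proof is correct and follows essentially the same route as the paper's: both establish the inclusion $p\widehat{(\cD_\star)} \subseteq \widehat{(\cC_\star)} \subseteq \widehat{(\cD_\star)}$ (the paper via a shifted-chain observation, you via generator-polynomial divisibility, but these are equivalent elementary arguments), and both obtain the decomposition by using the reduction map $\varphi_j : p^j R^n \to \FF_q^n$ together with the semisimplicity of $\FF_q[\sigma]$ and induction on $j$. The only difference is expository: you make the short exact sequence and its splitting explicit, whereas the paper more tersely asserts that $\widehat{(\cD_\star)}_j/\widehat{(\cC_\star)}_j$ and $\cD_j/\cC_j \oplus \widehat{(\cD_\star)}_{j+1}/\widehat{(\cC_\star)}_{j+1}$ have the same composition factors.
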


\begin{proof}
We first note that 
$p\widehat{(\cD _\star )} = \widehat{(\cD^{(1)} _\star )} $ 
where 
$\cD^{(1)}_0 =   \{ 0 \} $ and $\cD^{(1)}_i = \cD_{i-1} $ for 
$i=1,\ldots m-1 $. 
As $\cD_{i-1} \subseteq \cC_i $ we conclude that 
$p \widehat{(\cD _\star )}  \subseteq 
\widehat{(\cC _\star )} $. 
In particular $\widehat{(\cD _\star )}/\widehat{(\cC _\star )} $ is an
$\FF _q[\sigma ]$-module. 
As this algebra is semisimple, all modules are semisimple and it is 
enough to compare composition factors. 
For $0\leq j<m$ consider the $R[\sigma ]$-module epimorphism
$$\varphi _j : p^j R^n \to \FF_q^n \mbox{  defined by }
(p^j c_1,\ldots , p^j c_n) \mapsto (c_1 + pR ,\ldots , c_n + pR ) .$$
The kernel of $\varphi _j $ is $ p^{j+1} R^n$. 
We get 
$$\varphi _j ( \widehat{(\cD _\star )} _j) = \cD _j \mbox{ and } 
\varphi _j ( \widehat{(\cC _\star )} _j) = \cC _j.$$ 
As $p^{j+1} R^n \cap \widehat{(\cD _{\star })}_j = \widehat{(\cD _{\star })}_{j+1} $ and 
$p^{j+1} R^n \cap \widehat{(\cC _{\star })}_j = \widehat{(\cC _{\star })}_{j+1} $
the $\FF_q[\sigma ]$ modules 
${\widehat{(\cD _\star)}_j}/{\widehat{(\cC _\star)}_j}$ and 
$\cD_j/\cC_j \oplus {\widehat{(\cD _\star)}_{j+1}}/{\widehat{(\cC _\star)}_{j+1}}$
have the same composition factors. 
So the lemma follows using induction.
\end{proof}

For chains $(\cC _{\star })$ of extended  cyclic codes, we first lift the
cyclic codes and then extend the lifted code.
The lifted extended code is again denoted by $\widehat{(\cC _{\star })}$. Then 
Remark \ref{chainbijection} and Lemma \ref{eigenquotgen} hold accordingly.

\subsection{Lattices: Construction $\Dcyc$} \label{constAD}

Given a chain of binary codes one may apply Construction D to obtain 
a lattice with a good bound on its minimum 
(see \cite[Chapter 8, Section 8]{SPLAG}).
Construction D,  however, depends on the choice of a
suitable basis and hence might not preserve automorphisms. 
For chains of cyclic codes and extended cyclic codes we may first apply the methods
of Section \ref{ccc} to obtain a cyclic or 
extended cyclic code over $R=\ZZ/p^m\ZZ $ and then 
apply Construction A to this code. 
This construction allows to imitate the proof in \cite{BarnesSloane} 
to obtain good bounds on the minimum of the lattice. 

We keep the notation of the previous section, 
assume that $q=p$ is a prime, so $R=\ZZ/p^m\ZZ $,  and put $N$ to be one
of $n$ (cyclic codes) or $n+1$ (extended cyclic codes).
Additionally we fix an orthogonal basis 
$$( b_i \mid 1\leq i \leq N  ) \mbox{ of } \RR ^{N} \mbox{ with } 
(b_i,b_i ) = p^{-m } \mbox{ for } i = 1,\ldots ,N .$$
We put $\Omega := \langle b_i \mid  1\leq i \leq N   \rangle _{\ZZ }$
to be the lattice spanned by this orthogonal basis and denote by 
$\Phi : \Omega / p^m \Omega \to R^N $ the canonical isomorphism. 

\begin{definition}\label{33}
	Construction $\Dcyc $ associates to 
a chain 
$$(\cC _\star ) \ :  \ \cC _0 \subseteq \cC _1  \subseteq \ldots \subseteq 
 \cC _{m-1} \subseteq  \FF _p^N $$ of cyclic codes or extended 
cyclic codes the lattice 
$${\mathcal L} (\widehat{(\cC _\star )}) :=  \Phi ^{-1} (\widehat{(\cC _\star )}) = 
\{ \sum _{i=1}^N a_i b_i \in \Omega \mid (a_1+p^m \ZZ  ,\ldots , a_N+p^m \ZZ  ) 
\in \widehat{(\cC _\star )} \} .$$
\end{definition}

The lattice ${\mathcal L} (\widehat{(\cC _\star )}) $ obtained by 
construction $\Dcyc $ satisfies
$p^m \Omega \subseteq {\mathcal L} (\widehat{(\cC _\star )}) \subseteq \Omega $ 
and is invariant under the cyclic permutation $\sigma $ of the
basis vectors $(b_i  \mid 1\leq i \leq N )$.

\begin{lemma}  \label{eigenquotgenlat}
	  Given two sequences  $(\cC _{\star }) : \ (\cC _i)_{i=0}^{m-1}$
	  and $(\cD _{\star }) : \ (\cD _i)_{i=0}^{m-1}$ of cyclic or
	  extended cyclic codes such that 
	  $\cC _i \subseteq \cD _i \subseteq \cC_{i+1} $ for all $i$. 
	  Then we have the following isomorphisms of  $\FF _p[\sigma ]$ modules:
	     $$\frac{\cL(\widehat{(\cD _\star)})}{\cL(\widehat{(\cC _\star)})} 
	       \cong \frac{\widehat{(\cD _\star)}}{\widehat{(\cC _\star)}} 
	         \cong \frac{\cD_0}{\cC _0}\oplus \frac{\cD_1}{\cC _1} \oplus\cdots \oplus \frac{\cD_{m-1}}{\cC_{m-1}}.$$
	 \end{lemma}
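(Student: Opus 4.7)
The plan is to obtain the second isomorphism as a direct instance of the previous lemma and to deduce the first one from the defining properties of Construction $\Dcyc $.

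\textbf{Second isomorphism.} Apply Lemma \ref{eigenquotgen} with $j=0$. Since $\widehat{(\cC _\star )}_0 = \widehat{(\cC _\star )}\cap R^N = \widehat{(\cC _\star )}$ and similarly for $\cD$, the statement reduces immediately to
$$ \frac{\widehat{(\cD _\star )}}{\widehat{(\cC _\star )}} \cong \frac{\cD _0}{\cC _0}\oplus\cdots \oplus \frac{\cD _{m-1}}{\cC _{m-1}} $$
as $\FF _p[\sigma ]$-modules. (In the extended case the same statement was noted to go through after Lemma \ref{eigenquotgen}.)

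\textbf{First isomorphism.} The map $\Phi : \Omega /p^m\Omega \to R^N$ sending $b_i \mapsto e_i$ is an isomorphism of $\ZZ $-modules by construction, and it intertwines the cyclic permutation $\sigma $ of the basis $(b_i)$ with the cyclic shift on $R^N$, hence is an isomorphism of $R[\sigma ]$-modules. By Definition \ref{33} we have $\cL (\widehat{(\cC _\star )})/p^m\Omega = \Phi ^{-1}(\widehat{(\cC _\star )})/p^m\Omega$ and similarly for $\cD $, so $\Phi $ induces an $R[\sigma ]$-module isomorphism
$$ \frac{\cL (\widehat{(\cD _\star )})}{\cL (\widehat{(\cC _\star )})} \ \longrightarrow \ \frac{\widehat{(\cD _\star )}}{\widehat{(\cC _\star )}} . $$
From Lemma \ref{eigenquotgen} (or from $\cC _{i-1}\subseteq \cD _{i-1}\subseteq \cC _i$ directly) we have $p\,\widehat{(\cD _\star )} \subseteq \widehat{(\cC _\star )}$, so both sides are annihilated by $p$ and the isomorphism is in fact one of $\FF _p[\sigma ]$-modules.

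\textbf{Expected obstacle.} There is little to obstruct this proof: essentially everything is forced by unwinding the definitions. The only point that warrants care is the compatibility of $\Phi $ with $\sigma $, i.e.\ checking that the cyclic permutation of the orthogonal basis $(b_i)$ is transported to multiplication by $X$ on $R[X]/(X^N-1)$; this is built into the choice of identification of $\Omega /p^m\Omega $ with $R^N$, and once it is recorded the two isomorphisms simply follow by chasing the quotients.
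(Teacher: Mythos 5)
Your proof is correct and takes essentially the same approach as the paper: use the canonical isomorphism $\Phi$ (equivalently, the fact that both lattices contain $p^m\Omega$) to reduce the lattice quotient to the quotient of the lifted codes, then invoke Lemma \ref{eigenquotgen} at $j=0$. The only addition is your explicit note that $p$ annihilates both quotients, which the paper leaves implicit.
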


 \begin{proof}
 Both lattices $\cL(\widehat{(\cD _\star)})$ 
 and $\cL(\widehat{(\cC _\star)})$ contain $p^m \Omega $ 
 so  
 $$\frac{\cL(\widehat{(\cD _\star)})}{\cL(\widehat{(\cC _\star)})} 
 \cong \frac{\cL(\widehat{(\cD _\star)})/p^m\Omega}{\cL(\widehat{(\cC _\star)})/p^m\Omega }  \cong \frac{\widehat{(\cD _\star)}}{\widehat{(\cC _\star)}} .$$
 The second isomorphism is from Lemma \ref{eigenquotgen} putting $j=0$.
	 \end{proof}

\begin{proposition}\label{detLL} 
	The determinant of a Gram matrix of ${\mathcal L} (\widehat{(\cC _\star )}) $
	is $\det ({\mathcal L} (\widehat{(\cC _\star )})) = p^d $ with
$$d = m N - 2\sum_{i=0}^{m-1} \dim(\cC_i) .$$
\end{proposition}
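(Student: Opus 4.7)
The plan is to compute the index $[\Omega : \mathcal{L}(\widehat{(\cC_\star)})]$ and then invoke the standard relation $\det(\mathcal{L}) = [\Omega:\mathcal{L}]^2 \det(\Omega)$ for a finite-index sublattice. Because the $b_i$ are orthogonal with $(b_i,b_i)=p^{-m}$, we have $\det(\Omega)=p^{-mN}$, so it suffices to show
\[
[\Omega : \mathcal{L}(\widehat{(\cC_\star)})] \;=\; p^{\,mN - \sum_{i=0}^{m-1}\dim(\cC_i)}.
\]

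First I would use that $p^m\Omega \subseteq \mathcal{L}(\widehat{(\cC_\star)}) \subseteq \Omega$, which is guaranteed by the construction. The canonical isomorphism $\Phi : \Omega/p^m\Omega \to R^N$ (with $R=\ZZ/p^m\ZZ$) sends $\mathcal{L}(\widehat{(\cC_\star)})/p^m\Omega$ onto $\widehat{(\cC_\star)}$ by the very definition of $\mathcal{L}$, hence
\[
[\Omega:\mathcal{L}(\widehat{(\cC_\star)})] \;=\; [R^N : \widehat{(\cC_\star)}] \;=\; \frac{p^{mN}}{|\widehat{(\cC_\star)}|}.
\]

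The remaining step is to count $|\widehat{(\cC_\star)}|$, for which I would exploit the filtration
\[
\widehat{(\cC_\star)} \;=\; \widehat{(\cC_\star)}_0 \;\supseteq\; \widehat{(\cC_\star)}_1 \;\supseteq\; \cdots \;\supseteq\; \widehat{(\cC_\star)}_{m-1} \;\supseteq\; \widehat{(\cC_\star)}_m = 0,
\]
where $\widehat{(\cC_\star)}_j := \widehat{(\cC_\star)}\cap p^j R^N$ and the vanishing at level $m$ follows from $p^m R^N=0$. By the identification \eqref{quotC} the successive quotients satisfy $\widehat{(\cC_\star)}_j/\widehat{(\cC_\star)}_{j+1} \cong \cC_j$ as $\FF_p$-vector spaces, so
\[
|\widehat{(\cC_\star)}| \;=\; \prod_{j=0}^{m-1} |\cC_j| \;=\; p^{\sum_{j=0}^{m-1}\dim(\cC_j)}.
\]
Substituting gives the required index formula, and combining with $\det(\Omega) = p^{-mN}$ yields $\det(\mathcal{L}(\widehat{(\cC_\star)})) = p^{2mN - 2\sum\dim(\cC_i) - mN} = p^d$ with $d = mN - 2\sum_{i=0}^{m-1}\dim(\cC_i)$. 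For the extended cyclic case, the same argument applies with $N=n+1$, since extension is a dimension-preserving isomorphism onto the extended code.

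There is no serious obstacle: the argument is essentially bookkeeping once one trusts \eqref{quotC} and the definition of $\mathcal{L}$. The only point that needs a tiny verification is that the filtration really reaches $0$ at step $m$ (so that the product of orders gives the full cardinality of $\widehat{(\cC_\star)}$), which follows simply because $R^N$ has exponent $p^m$.
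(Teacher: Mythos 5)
Your proposal is correct and takes essentially the same approach as the paper: both use the filtration $\widehat{(\cC_\star)}_j$, the isomorphisms $\widehat{(\cC_\star)}_j/\widehat{(\cC_\star)}_{j+1}\cong\cC_j$, and the containment $p^m\Omega\subseteq\cL(\widehat{(\cC_\star)})\subseteq\Omega$ to count the index. The only cosmetic difference is that you convert $|\cL/p^m\Omega|$ to $[\Omega:\cL]$ and compare against $\det(\Omega)$, whereas the paper compares directly against $\det(p^m\Omega)$.
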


\begin{proof}
	Put $L:={\mathcal L} (\widehat{(\cC _\star )}) $ and 
	for $0\leq j \leq m$ put
	$L_j := \Phi ^{-1} (\widehat{(\cC _\star )}_j ) = L \cap p^j \Omega $.
	Then clearly all the $L_j$ are $\sigma $ invariant sublattices of $\Omega $,
	$L_0 = L$ and $L_m=p^m\Omega $.
	Furthermore by Equation \eqref{quotC} 
	$$L_{j} / L_{j+1} \cong \widehat{(\cC _\star )}_j / \widehat{(\cC _\star )}_{j+1} 
	\cong \cC _j \mbox{ as } \FF _p[\sigma ] \mbox{ modules.} $$
	To compute the determinant of $L$ we compute the index
	$$|L/p^m\Omega |  = \prod _{j=0}^{m-1} |L_j/L_{j+1} | = 
	\prod _{j=0}^{m-1} |\cC _j |  = p^{\sum_{j=0}^{m-1} \dim(\cC_j)} .$$
	Therefore we find
	$$ d = 
	\log_p(\det(L)) = \log_p(\det(p^m\Omega ) ) - 2 \log_p(|L/p^m\Omega | ) =
	m N - 2 \sum_{j=0}^{m-1} \dim(\cC_j) .$$
\end{proof}

The new Construction $\Dcyc $ allows to prove the same bound for the
minimum of the lattice as Construction D.
To state this bound for arbitrary primes $p$ recall that the 
\emph{Euclidean weight}  of $c=(c_1,\ldots , c_N)\in \FF _p^N $ 
is 
$$w_E(c) := \min \{ \sum _{i=1}^N a_i^2 \mid a_i \in \ZZ , a_i+p\ZZ= c_i 
\mbox{ for } i=1,\ldots , N \} .$$ 
Then $\dist _E(\cC ) := \min \{ w_E(c) \mid 0\neq c\in \cC \} $
is the \emph{Euclidean distance} of the code $\cC \leq \FF _p^N $.
Note that $\dist _E(\cC ) = \dist (\cC )$ is the 
usual Hamming distance if $p=2$ or $p=3$.

\begin{theorem}\label{constD}
	Let $(\cC _\star )  $ be as in Definition \ref{33}. 
	Assume moreover that there is $\gamma \geq 1 $ 
	 such that 
	 $\dist _E (\cC _i ) \geq p^{2m-2i}/\gamma $  for all 
	 $0\leq i \leq m-1$. 
	 Then  $\min ({\mathcal L}(\widehat{(\cC_\star )}) )\geq p^m/\gamma .$
\end{theorem}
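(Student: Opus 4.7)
The plan is to mimic the classical Barnes--Sloane argument for Construction D, but carried out inside the $p$-adic filtration $L_j = \cL(\widehat{(\cC _\star)}) \cap p^j \Omega$ that is already available from Proposition \ref{detLL}. Let $v = \sum _{i=1}^N a_i b_i$ be a nonzero vector in $\cL(\widehat{(\cC _\star)})$. Since $(b_i,b_i) = p^{-m}$, we have $\|v\|^2 = p^{-m}\sum_i a_i^2$, so it suffices to prove $\sum_i a_i^2 \geq p^{2m}/\gamma$.

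First I would dispose of the easy case $v \in p^m\Omega$: then each $a_i \in p^m\ZZ$, not all zero, giving $\sum a_i^2 \geq p^{2m} \geq p^{2m}/\gamma$. Otherwise let $j$ be the largest integer in $\{0,\ldots,m-1\}$ such that $(a_i \bmod p^m)_{i=1}^N$ lies in $p^j R^N$; equivalently, the image in $\widehat{(\cC_\star)}_j$ is nonzero modulo $\widehat{(\cC_\star)}_{j+1}$. By \eqref{quotC}, this image corresponds to a nonzero codeword $\bar c \in \cC_j$.

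The key observation is now the factorization step. Because $a_i \bmod p^m \in p^j R$ with $j \leq m$, the integer $a_i$ is itself divisible by $p^j$; write $a_i = p^j c'_i$ with $c'_i \in \ZZ$. Reducing mod $p$ one checks that $c'_i \bmod p = \bar c_i$, so $(c'_1,\ldots,c'_N)$ is an integer lift of the nonzero codeword $\bar c \in \cC_j$. By the definition of Euclidean weight,
\[
  \sum_{i=1}^N (c'_i)^2 \;\geq\; w_E(\bar c) \;\geq\; \dist_E(\cC_j) \;\geq\; p^{2m-2j}/\gamma,
\]
using the hypothesis. Multiplying by $p^{2j}$ yields $\sum_i a_i^2 \geq p^{2m}/\gamma$, and dividing by $p^m$ gives the claimed bound on $\|v\|^2$.

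The proof is essentially routine once the filtration $L_0 \supseteq L_1 \supseteq \cdots \supseteq L_m = p^m\Omega$ is in place; no step is a real obstacle. The only point that requires some care is verifying that, after choosing the level $j$, the integer coordinates $a_i$ really do factor as $p^j c'_i$ with $(c'_i \bmod p)$ equal to the codeword of $\cC_j$ produced by the isomorphism \eqref{quotC}, so that the Euclidean-distance hypothesis can legitimately be invoked on this lift. That compatibility is exactly what Lemma \ref{eigenquotgen} and Equation \eqref{quotC} were designed to provide.
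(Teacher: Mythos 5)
Your proof is correct and takes essentially the same route as the paper's: both set $j$ to be the maximal level of the filtration $L_j = L \cap p^j\Omega$ containing the vector, pull out the factor $p^j$, use Equation~\eqref{quotC} to identify the reduction mod $p$ of the resulting integer vector with a nonzero codeword of $\cC_j$, and invoke the Euclidean-distance hypothesis. The one cosmetic difference is that you phrase the choice of $j$ via the code ring $R^N$ whereas the paper phrases it directly via the lattice filtration $p^j\Omega$, but these are the same thing through the isomorphism $\Phi$.
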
 

\begin{proof}
We keep the notation of the proof of Proposition \ref{detLL}. 
Let $0\neq x\in L$  and let $j$ be maximal 
such that $x\in p^j \Omega $. 
If $j<m$ then $x\in L_j$ and 
$x = p^j y = p^j \sum_{i=1}^N y_i b_i $ with $y_i\in \ZZ $
such that 
$$0 \neq \overline{y} := (y_1 + p\ZZ ,\ldots , y_N + p\ZZ ) \in \cC _j .$$ 
As $\dist _E (\cC_j )  \geq p^{2m-2j} /\gamma $, 
we have $\sum _{i=1}^N y_i^2 \geq p^{2m-2j} /\gamma $ so 
$$(x,x) = p^{2j} (y,y) \geq p^{2j} \frac{p^{2m-2j}}{\gamma }  (b_1,b_1) = 
\frac{p^{2m}}{p^m\gamma } = \frac{p^m}{\gamma } .$$
If $j\geq m$ then $x\in p^m \Omega $, so $(x,x) \geq p^m$.
\end{proof}

\section{Setup and some notation}\label{not} 

Throughout the rest of the paper 
we fix $m\in \ZZ _{>0 }$ and consider codes of length $2^{2m}$ 
and lattices of dimension $2^{2m}$. 
We index our basis by 
the elements of 
${\mathcal V} := \FF _2^{2m} $. 
In particular  binary codes of length $2^{2m}$ 
will be considered as subspaces of the space of functions 
$\FF _2^{{\mathcal V}} : = \{ f: {\mathcal V} \to \FF _2 \} $. 
For any $f\in \FF _2^{{\mathcal V}}$ the 
support of $f$ is $\supp (f) := \{ v\in {\mathcal V} \mid f(v) \neq 0 \} $. 
If $S= \supp (f) $, then clearly $f= \chi _S $ is the 
{\em characteristic function} of $S\subseteq {\mathcal V}$ defined by 
$$\chi _S : {\mathcal V} \to  \FF _2, 
			 v\mapsto \left\{ \begin{array}{cc} 
					                         1 & v\in S \\ 0 & v\not\in S
					                         \end{array} \right. $$

The affine group $\Aff ({\mathcal V}) := {\mathcal V} : \GL({\mathcal V})$
acts on $\FF _2^{{\mathcal V}}$ by permuting the  elements of ${\mathcal V}$.
The Reed-Muller codes from  Definition \ref{RMdef} below are invariant under
$\Aff ({\mathcal V})$.
This invariance is used to view the Reed-Muller codes as extended cyclic
codes.
To this aim we fix a ``Singer-cycle'' 
$$\sigma \in \GL({\mathcal V}) \leq \Aff ({\mathcal V}),$$ 
i.e. an element of order $2^{2m}-1$ permuting the
non-zero elements of ${\mathcal V}$ transitively.
The element $\sigma $ is not unique, even up to conjugacy in $\GL({\mathcal V})$.
Any such $\sigma $ gives rise to an identification of ${\mathcal V}$ with
the field of $2^{2m}$ elements. The eigenvalues of the action of
$\sigma $ as an element of $\GL ({\mathcal V})$ are the elements of
 $$\{ \zeta , \zeta ^2 ,\zeta ^4, \ldots ,\zeta ^{2^{2m-1}} \}$$  for
 a certain
  primitive $(4^m-1)$st root of unity $\zeta \in \overline{\FF } _2 $
  which we fix for the rest of the paper. 

For later use we will fix a vector space structure 
of ${\mathcal V}$ over $\FF _4$ that is defined by $\sigma $. 
	To this aim define
	$\omega := \zeta ^{(4^m-1)/3} $ to be a primitive third 
	root of unity in the algebraic closure of $\FF _2$ 
	(i.e. a primitive element of $\FF _4$).

\begin{remark} \label{eta}
	Let $\eta := \sigma ^{(4^m-1)/3 } \in \GL ({\mathcal V})$.  
	For $v\in {\mathcal V} $ we put $\omega v := \eta (v) $.
	This turns ${\mathcal V} \cong \FF _2^{2m}$ into an
	$m$-dimensional vector space 
	${\mathcal V}_{\FF _4} \cong \FF _4^{m}$ over the field $\FF _4 = \{ 0,1,\omega ,\omega ^2 \}$.
As $\sigma $ commutes with $\eta $, the element $\sigma $ 
acts $\FF _4$-linearly on ${\mathcal V}_{\FF _4}$, so 
$$\sigma \in \GL ({\mathcal V} _{\FF _4})  \leq 
\Aff ({\mathcal V}_{\FF 4}) \cong \FF _4^{m} : \GL _{m} (\FF _4) .$$
Identifying the $\FF _4$-space ${\mathcal V}_{\FF _4}$ with the 
$\omega $-eigenspace of $\eta $ we compute the eigenvalues 
of $\sigma $ on ${\mathcal V}_{\FF _4} \cong \FF _4^m$ as 
$\zeta , \zeta ^4, \ldots , \zeta ^{4^{m-1}} $.
\end{remark}

The following notation will be used throughout the paper. 

\begin{notation}\label{wt2}
	\begin{itemize}
		\item[(a)]
    Any $0\leq u \leq 4^m-1 $ has a unique expression as
     $u= \sum _{i=0}^{2m-1} u_i 2^i $ with $u_i \in \{0,1\} $.
     Then the 2-weight of $u$ is $$\wt_2(u) := |\{ i \in \{ 0,\ldots , 2m-1 \} \mid u_i = 1 \}|
   = \sum _{i=0}^{2m-1} u_i \in \ZZ _{\geq 0}  .$$
   We also define
$$O(u):= | \{ i \in \{ 0, \ldots , m-1 \} \mid  u_{2i+1} = 1 \} | 
\mbox{ and } 
E(u):= | \{ i \in \{ 0, \ldots , m-1 \} \mid u_{2i} = 1 \} | .$$
   \item[(b)]
	For $-1\le r<2m$ we put
$$ Z_r  :=  \{ \zeta^u \mid 0<u\leq 4^m-1, \wt_2(u)\le 2m-1-r \} . $$
\item[(c)]
	For $0\le r\le 2m$ let
  $$ \Theta ^{(r)} 
  :=\{ \zeta ^u \mid 0\leq u \leq 4^{m}-1 , \wt_2(u) = 2m-r \}.  $$
  So $\Theta ^{(0)} = \Theta ^{(2m)} = \{ 1 \} $.  
\item[(d)] 
$M_r := 
\begin{cases}
M_+ := \{ 0\leq k \leq m \mid m-k \mbox{ even } \} & \mbox{ if } r \mbox{ is even } \\ 
M_- := \{ 0\leq k \leq m \mid m-k \mbox{ odd } \} & \mbox{ if } r \mbox{ is odd.} 
\end{cases}
$
  \item[(e)]
For $0\leq k \leq m$ we put 
$$\Theta_k:=\{ \zeta ^u \mid 0\leq u\leq 4^{m}-1, \ |O(u) - E(u)| = m-k \}. $$
\item[(f)]
 Finally, for $0\leq r\leq 2m$ and  $k\in M_r$,  we  define
 $$\Theta ^{(r)}_{k} := \{ \zeta ^u \mid 0\leq u \leq 4^{m}-1, \wt_2(u) = 2m-r, 
|O(u)-E(u)| =m-k \}  = \Theta ^{(r)} \cap \Theta _k .$$
Obviously $\Theta ^{(r)} \cap \Theta _k = \emptyset $ if $k\not\in M_r $.
\end{itemize}
\end{notation}

\begin{lemma} \label{cardTh}
	Let 
$0\leq r\leq 2m$ and  $0\leq k \leq m$. 
\begin{itemize}
\item[(a)]
$|\Theta ^{(r)} | = { 2m \choose r } $. 
\item[(b)]
$|\Theta _k | =  
\begin{dcases} 2 {2m \choose k} & \textup{ if } k < m \\ 
{2m \choose m} -1  & \textup{ if } k = m.
\end{dcases}
$
\item[(c)]
		If $k\in M_{r} $ we have
$$ |\Theta ^{(r)}_{k} | =  
\begin{dcases}
  2{m\choose (m-r+k)/2}{m\choose (k+r-m)/2} &\textup{ if } k<m\\
  {m \choose r/2}^2 &\textup{ if } m=k
\end{dcases}
$$
where we put  ${a \choose b} : = 0 $ if $b < 0 $. 
\end{itemize}
\end{lemma}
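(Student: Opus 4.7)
The plan is to translate everything into an exercise about binary tuples $(u_0,\ldots,u_{2m-1})\in\FF_2^{2m}$. Two preliminary observations do all the work: first, since $\zeta$ has order $4^m-1$, the assignment $u\mapsto\zeta^u$ is injective on $\{1,\ldots,4^m-2\}$, the only collision being $\zeta^0=\zeta^{4^m-1}=1$ (coming from $u=0$ and $u=2^{2m}-1$, both of which satisfy $O(u)=E(u)$); second, the positions of the $1$-bits among the $m$ odd-indexed and the $m$ even-indexed slots are independent, so that for fixed non-negative integers $a,b\leq m$ the number of $u\in\{0,\ldots,4^m-1\}$ with $O(u)=a$, $E(u)=b$ is $\binom{m}{a}\binom{m}{b}$. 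This reduces all three counts to sums of binomial products, to which Vandermonde's identity applies.

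For (a), binary strings of length $2m$ with $\wt_2=2m-r$ number $\binom{2m}{r}$; in the degenerate cases $r\in\{0,2m\}$ a unique $u$ survives and maps to $1$, still giving $1=\binom{2m}{r}$. For (b), if $k<m$ then the conditions $O(u)-E(u)=m-k$ and $O(u)-E(u)=-(m-k)$ define disjoint sets of $u$, each of which by Vandermonde contributes $\sum_j\binom{m}{j}\binom{m}{k-j}=\binom{2m}{k}$, and neither contains the collision points $0,\,2^{2m}-1$ (which satisfy $O=E$); hence $|\Theta_k|=2\binom{2m}{k}$. For $k=m$, the condition $O(u)=E(u)$ gives $\sum_j\binom{m}{j}^2=\binom{2m}{m}$ values of $u$, but now both $u=0$ and $u=2^{2m}-1$ are counted and collapse to $1\in\Theta_m$, accounting for the $-1$ in the formula.

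For (c), the linear system $a+b=2m-r$, $a-b=\pm(m-k)$ has the unique (unordered) solution $\{a,b\}=\{(3m-k-r)/2,(m+k-r)/2\}$, and the parity condition $k\in M_r$ (which is exactly $k+r\equiv m\pmod 2$) guarantees integrality; entries that are formally negative are zero by the stated convention $\binom{a}{b}=0$ for $b<0$. When $k<m$ the two orderings of $(a,b)$ are distinct and symmetry of the binomial coefficient produces the factor $2$; when $k=m$ one has $r$ even, $a=b=m-r/2$, and the single contribution $\binom{m}{r/2}^2$. The only place a $\zeta^u=1$ collision could intervene is at $(r,k)=(0,m)$ or $(2m,m)$, where the formula and the direct count both equal $1$, so no further correction is needed. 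The main (and essentially only) subtlety throughout is the bookkeeping for the exceptional collision at $1\in\overline{\FF}_2$, which affects exactly the single entry $|\Theta_m|$ and otherwise leaves the naive counts untouched.
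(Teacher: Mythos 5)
Your proof is correct, and the collision bookkeeping at $\zeta^0=\zeta^{4^m-1}=1$ is handled exactly where it matters (only at $k=m$ in part (b), and nowhere in (c) because the two offending $u$'s have different 2-weights).

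For part (b) your route differs from the paper's. You decompose by the pair $(O(u),E(u))$, observe independence of the odd- and even-indexed bit positions to get the count $\binom{m}{a}\binom{m}{b}$, and then invoke Vandermonde's identity $\sum_j\binom{m}{j}\binom{m}{k-j}=\binom{2m}{k}$. The paper instead avoids Vandermonde entirely by exhibiting an explicit bijection between $\{u\mid O(u)-E(u)=m-k\}$ and the $k$-element subsets $I\subseteq\{0,\ldots,2m-1\}$, via $I:=\{i\mid i\text{ even, }u_i=1\}\cup\{i\mid i\text{ odd, }u_i=0\}$; one checks $|I|=E(u)+(m-O(u))=k$. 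Both are valid. The paper's bijection is slightly slicker for (b) alone, but your $(O,E)$-decomposition has the advantage of being the uniform engine for all three parts: part (c), which the paper dispatches with ``a straightforward counting argument,'' falls out of the same setup by simply solving the linear system $a+b=2m-r$, $a-b=\pm(m-k)$ instead of summing over all $(a,b)$. Your observation that the parity condition $k\in M_r$ is exactly what makes the half-integer expressions integral, and that the convention $\binom{a}{b}=0$ for $b<0$ absorbs the out-of-range cases, supplies exactly the detail the paper leaves implicit.
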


\begin{proof}
	(a) is clear and to see (b) 
let $0\leq u\leq 4^{m}-1$ be such that
$O(u) - E(u) = m-k$. Write $u= \sum_{i=0}^{2m-1} u_i 2^i$
with $u_i \in \{ 0,1 \}$ and
  define
$$I :=\{ i \in \{ 0,\ldots , 2m-1 \} \mid
i \mbox{ even  and } u_i =1 \mbox{ or } i \mbox{ odd and } 
u_i = 0 \} .$$
Then $|I| = E(u)+ (m-O(u)) = E(u)-O(u) +m = m - (m-k) = k $.
So $X_k:=\{ u \in \{ 0,\ldots , 4^m-1 \}  \mid O(u) - E(u) = m-k \} $
  is in bijection with the $k$-element subsets
  $I\subset \{ 0,\ldots , 2m-1 \} $ and hence
  has ${2m\choose k}$ elements.
  $X_k$ contains $0$ and $4^m-1$ if and only if $k=m$ 
  so $|\Theta _m| = |X_m| -1 $  and 
  $|\Theta _k| = 2|X_k|$ if $k<m$.
  \\
 (c) follows by a straightforward counting argument. 
\end{proof}

\section{Reed-Muller codes and related extended cyclic codes}

\subsection{Binary Reed-Muller codes of length $2^{2m}$} \label{RMcyc}

\begin{definition}\label{RMdef}
   For $0\leq r \leq 2m $ let
  $$\cR( r,2m) := \langle \chi _{a+{\mathcal U}} \mid a\in {\mathcal V}, \ 
  {\mathcal U}\leq {\mathcal V} \mbox{ a subspace of dimension }  \dim ({\mathcal U}) = 2m-r \rangle $$
   denote the $r^{\textup th}$ order \emph{binary Reed-Muller code} of length $2^{2m}$.
   \\
   To simplify notation we put $\cR (-1,2m) := \{ 0 \} $. 
\end{definition}

Some well known properties of the Reed-Muller codes are collected
in the following remark. 

\begin{remark}\label{RMdetails}
	 \begin{itemize}
 \item[(a)] 
      $\FF _2^{2^{2m}} = \cR( 2m,2m) \supset \cR( 2m-1,2m)\supset \ldots \supset 
        \cR( 1,2m) \supset \cR( 0,2m) = \langle {\bf 1 } \rangle $.
\item[(b)] The dimension of $\cR( r,2m) $ is
     $\dim (\cR( r,2m)) = \sum _{\ell =0}^{r} {{2m}\choose{\ell}}  $.
    \item[(c)] The dual code is  $\cR( r,2m) ^{\perp } = \cR( 2m-r-1,2m)$.
    \item[(d)]
 For the minimum distance we have
 $\dist (\cR( r,2m)) = 2^{2m-r}  $ where $0\leq r\leq 2m$.
Moreover the minimum weight vectors in $\cR( r,2m)$
	are the elements of
	$$\{ \chi _{a+{\mathcal U}} \mid a\in {\mathcal V}, \ 
	{\mathcal U}\leq {\mathcal V}, \ \dim ({\mathcal U}) = 2m-r \} .$$
 \end{itemize}
\end{remark}

To define a convenient basis of the Reed-Muller codes 
we fix a basis $(v_1,\ldots , v_{2m}) $ of $\cV $ and put
$${\mathcal T}_r:= \{ \cU \leq \cV \mid \cU = \langle v_i \mid i\in I \rangle _{\FF _2} 
\mbox{ where } I \subseteq \{ 1,\ldots , 2m \} \mbox{ with }  |I| = r \} .$$
Then we find 
\begin{proposition} (cf. \cite[p. 51]{BarnesWall}) \label{basisRM}
	For $0\leq r\leq 2m $ the set 
	$$\{ \chi _{\cU } \mid \cU \in {\mathcal T}_s, 2m-r\leq s\leq 2m \} $$
	is a basis of $\cR (r,2m) $ and  the classes of
	$$\{ \chi _{\cU } \mid \cU \in {\mathcal T}_{2m-r} \} $$
	form a basis of $\cR(r,2m)/\cR(r-1,2m) $.
\end{proposition}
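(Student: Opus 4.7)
The plan is to prove both statements simultaneously by induction on $r$. By Remark \ref{RMdetails}(b) the proposed spanning set of $\cR(r,2m)$ has cardinality
$$\sum_{s=2m-r}^{2m}|\cT_s| \;=\; \sum_{s=2m-r}^{2m}\binom{2m}{s}\;=\;\sum_{\ell=0}^{r}\binom{2m}{\ell}\;=\;\dim \cR(r,2m),$$
so once it is shown to lie in $\cR(r,2m)$ and to be linearly independent, both assertions follow. Containment is immediate from Definition \ref{RMdef}: for $\cU\in\cT_s$ with $s\geq 2m-r$ I pick any subspace $\cU'\subseteq\cU$ of dimension $2m-r$ and write $\chi_\cU=\sum_{\bar a\in\cU/\cU'}\chi_{\bar a+\cU'}$, which is a sum of standard generators of $\cR(r,2m)$.

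The base case $r=0$ is trivial, since $\cR(0,2m)=\langle\chi_\cV\rangle$ and $\cT_{2m}=\{\cV\}$. For the inductive step, the induction hypothesis supplies a basis of $\cR(r-1,2m)$ coming from the subspaces in $\cT_s$ with $s\geq 2m-r+1$, so the dimension count above reduces the whole proposition to showing that the classes of $\{\chi_\cU\mid \cU\in\cT_{2m-r}\}$ are linearly independent in the quotient $\cR(r,2m)/\cR(r-1,2m)$; this will simultaneously yield the second assertion.

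To establish this independence the plan is to exploit the duality $\cR(r-1,2m)^\perp=\cR(2m-r,2m)$ from Remark \ref{RMdetails}(c). For each $\cU=\langle v_i\mid i\in I\rangle\in\cT_{2m-r}$ introduce the complementary subspace $\cW_\cU:=\langle v_j\mid j\in I^c\rangle\in\cT_r$, whose characteristic function $\chi_{\cW_\cU}$ lies in $\cR(2m-r,2m)=\cR(r-1,2m)^\perp$. For any $\cU'=\langle v_i\mid i\in I'\rangle\in\cT_{2m-r}$ the standard $\FF_2$-valued inner product is
$$\langle \chi_{\cU'},\chi_{\cW_\cU}\rangle \;=\; |\cU'\cap\cW_\cU|\bmod 2 \;=\; 2^{|I'\cap I^c|}\bmod 2,$$
which equals $1$ precisely when $I'\cap I^c=\emptyset$, i.e.\ (using $|I'|=|I|=2m-r$) when $\cU'=\cU$. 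The Gram matrix of this pairing between $\{\chi_\cU\}_{\cU\in\cT_{2m-r}}$ and $\{\chi_{\cW_\cU}\}_{\cU\in\cT_{2m-r}}$ is therefore the identity, so any relation $\sum_{\cU'}\alpha_{\cU'}\chi_{\cU'}\in\cR(r-1,2m)$ forces all $\alpha_{\cU'}=0$, and the induction closes. The only creative ingredient is choosing these complementary subspaces as test functions; once the resulting triangularity is observed, no further computation is required.
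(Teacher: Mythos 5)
Your argument is correct and complete. Note that the paper itself offers no proof of Proposition \ref{basisRM} — it simply cites \cite[p.~51]{BarnesWall} — so there is no internal proof to compare against; you have supplied a self-contained derivation.

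Your approach (cardinality count $+$ containment $+$ linear independence in the quotient via the duality $\cR(r-1,2m)^\perp=\cR(2m-r,2m)$) is the standard and natural route, and all the individual steps check out: the containment argument correctly decomposes $\chi_\cU$ for $\dim\cU>2m-r$ as a sum of characteristic functions of $(2m-r)$-dimensional cosets inside $\cU$; the identity $\cU'\cap\cW_\cU=\langle v_i\mid i\in I'\cap I^c\rangle$ holds because the $v_i$ are a basis of $\cV$, giving $\lvert\cU'\cap\cW_\cU\rvert=2^{|I'\cap I^c|}$; and the pairing matrix is therefore the identity, forcing all coefficients to vanish. One minor stylistic point: you could avoid induction entirely, since once you have containment, the dimension count for $\cR(r,2m)$ itself, and the nondegenerate pairing between $\{\chi_{\cU}\}_{\cU\in\cT_{2m-r}}$ and $\{\chi_{\cW_\cU}\}$, the linear independence of the full set $\{\chi_\cU\mid \cU\in\cT_s,\ s\ge 2m-r\}$ can be read off directly from the block-triangular structure of the pairing across all the $\cT_s$; but the inductive packaging you chose is clean and equally valid.
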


The affine group $\Aff ({\mathcal V}) := {\mathcal V} : \GL({\mathcal V})$
acts on $\FF _2^{{\mathcal V}}$ by permuting the  elements of ${\mathcal V}$.
As affine transformations preserve the set of affine subspaces of a given
dimension, the Reed-Muller codes are invariant under
$\Aff ({\mathcal V})$.
In particular the Singer-cycle $\sigma $ defined in  Section \ref{not} 
is an automorphism of all the Reed-Muller codes from Definition \ref{RMdef} 
and these codes are extended cyclic codes as given in the
following remark.

\begin{remark} (cf. \cite[Chapter 13, Theorem 11]{MacWilliamsSloane})\label{RM}
	For $-1\le r<2m$, define  $\cR(r,2m)^*$ to be the length $4^{m}-1$ binary cyclic code with zeros 
	$Z(\cR(r,2m)^* ) = Z_r $ where $Z_r$ is as in Notation \ref{wt2} (b). 
The extended code of $\cR( r,2m)^*$ is the $r^{\textup{th}}$ order 
binary Reed-Muller code $\cR( r,2m)$. Note that $\cR (2m,2m) = \FF _2^{2^{2m}}$ is the universe code
which is not an extended cyclic code. 
\end{remark}

Applying Remark \ref{gencyc} we obtain the eigenvalues of $\sigma $ on $\cR( r,2m)/\cR( r-1,2m) $:

\begin{proposition} \label{eigenvalsR}
  For $0\leq r \leq 2m$ the eigenvalues of $\sigma $ on
  $$\cR( r,2m)/\cR( r-1,2m) $$ 
  are exactly the elements in 
  $ \Theta ^{(r)}$ from Notation \ref{wt2} (c). 
\end{proposition}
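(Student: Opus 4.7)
The plan is to combine Remark \ref{RM}, which identifies $\cR(r,2m)$ for $-1\le r<2m$ as the extension of the cyclic code $\cR(r,2m)^*$ with zero set $Z_r$, with the basic eigenvalue bookkeeping from Section \ref{Sec:cyc}. First I would invoke Remark \ref{gencyc} to convert the zero set into the eigenvalue set: $\Theta(\cR(r,2m)^*) = \mathcal{Z}\setminus Z_r$ where $\mathcal{Z}=\{\zeta^u\mid 0\le u\le 4^m-2\}$. Since $Z_r$ only contains powers $\zeta^u$ with $0<u$ and $\wt_2(u)\le 2m-1-r$, we get
\[
\Theta(\cR(r,2m)^*) \;=\; \{\zeta^u \mid 0<u\le 4^m-2,\ \wt_2(u)\ge 2m-r\}\ \cup\ \{\zeta^0\}.
\]
By the final paragraph of Section \ref{Sec:cyc}, the projection onto the first $4^m-1$ coordinates is an $\FF_2[\sigma]$-isomorphism from $\cR(r,2m)$ onto $\cR(r,2m)^*$, so both have the same $\sigma$-eigenvalues.

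Next I would pass to the quotient $\cR(r,2m)/\cR(r-1,2m)$. Because $\sigma$ has order $4^m-1$, which is coprime to $2$, the algebra $\FF_2[\sigma]\subseteq \mathrm{End}(\FF_2^{\mathcal V})$ is semisimple, so every $\sigma$-invariant short exact sequence splits and the eigenvalues of $\sigma$ on the quotient are precisely the multiset difference of eigenvalues on the two codes. Moreover each eigenvalue of $\sigma$ on the ambient space $\FF_2^{{\mathcal V}\setminus\{0\}}$ occurs with multiplicity one (it is the regular representation of a cyclic group of order coprime to the characteristic), so we really are taking a difference of sets. For $1\le r\le 2m-1$, subtracting the formula above for $r-1$ from that for $r$ cancels the extra $\{\zeta^0\}$ and leaves
\[
\Theta(\cR(r,2m))\setminus\Theta(\cR(r-1,2m))\;=\;\{\zeta^u \mid 0<u\le 4^m-2,\ \wt_2(u)=2m-r\}\;=\;\Theta^{(r)},
\]
using the definition of $\Theta^{(r)}$ from Notation \ref{wt2}(c); the boundary value $u=0$ contributes weight $0\ne 2m-r$ and hence does not appear. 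The case $r=0$ is immediate, since $\cR(-1,2m)=\{0\}$ and $\cR(0,2m)=\langle\mathbf 1\rangle$ carries only the eigenvalue $\zeta^0=1=\Theta^{(0)}$.

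The only case not covered by Remark \ref{RM} is $r=2m$, since $\cR(2m,2m)=\FF_2^{\mathcal V}$ is not an extended cyclic code. Here I would argue directly: $\dim\cR(2m,2m)/\cR(2m-1,2m)=1$, spanned for instance by the class of the characteristic function of $\{0\}$ (or equivalently of the all-ones function modulo the even-weight subcode $\cR(2m-1,2m)$), which is fixed by $\sigma$. Hence the unique eigenvalue on this quotient is $1=\zeta^0$, matching $\Theta^{(2m)}=\{1\}$. The whole argument is essentially bookkeeping; the only mildly delicate step is making sure the eigenvalue $\zeta^0$ appearing in both $\Theta(\cR(r,2m)^*)$ and $\Theta(\cR(r-1,2m)^*)$ cancels correctly and is recovered separately for the top-degree case $r=2m$.
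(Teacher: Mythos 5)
Your proof follows exactly the route the paper intends: apply Remark \ref{RM} to read off the zero set $Z_r$ of $\cR(r,2m)^*$, use Remark \ref{gencyc} to pass from zero sets to eigenvalue sets, exploit semisimplicity of $\FF_2[\sigma]$ (order $4^m-1$ coprime to $2$) together with multiplicity freeness on $\FF_2^{\mathcal{V}\setminus\{0\}}$ to reduce the quotient computation to set difference, and treat the boundary cases $r=0$ and $r=2m$ directly. The bookkeeping is correct throughout, including the observation that the eigenvalue $1=\zeta^0$ cancels between $\Theta(\cR(r,2m)^*)$ and $\Theta(\cR(r-1,2m)^*)$.

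One small slip in the $r=2m$ case: your parenthetical ``or equivalently of the all-ones function modulo the even-weight subcode'' is wrong. The all-ones vector has weight $2^{2m}$, which is even, so $\mathbf 1 \in \cR(2m-1,2m)$ and it represents the zero class in $\cR(2m,2m)/\cR(2m-1,2m)$; it is not equivalent to $\chi_{\{0\}}$, which has odd weight $1$. Your main representative $\chi_{\{0\}}$ does the job (it is fixed by $\sigma \in \GL(\mathcal V)$ since $\sigma(0)=0$), so the conclusion that the unique eigenvalue on this quotient is $1=\Theta^{(2m)}$ stands; just delete the parenthetical.
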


\subsection{Extended cyclic codes sandwiched between
 Reed-Muller codes} \label{RMgen}

In this section we construct some new 
extended cyclic codes  
that are invariant under  $\Aff ({\mathcal V}_{\FF _4}) $.
We use the notation introduced in Section \ref{not}.

\begin{definition} 
Let $0\leq r<2m $ and $I\subset M_{r}$ be given. Put 
$$Z_{r,I} := Z_{r-1} \setminus (\bigcup _{k\in I} \Theta ^{(r)}_{k} ) .$$
Note that $Z_r \subseteq Z_{r,I} \subseteq Z_{r-1} $. 
Then let 
${\mathcal C}(r,I,2m)^* \leq \FF _2^{2^{2m}-1} $ be the cyclic code with zero set $Z_{r,I}$ 
and ${\mathcal C}(r,I,2m) \leq \FF _2^{2^{2m}} $ the extended code of ${\mathcal C}(r,I,2m)^*$. 
Also we define
$$
\cC(2m,I,2m) =
\begin{cases}
  \cR(2m-1,2m) & \textup{if } m\not\in I\\
  \cR(2m,2m)=\FF_2^{2^{2m}} &\textup{otherwise.}
\end{cases}
$$
\end{definition}

Comparing zero sets we immediately get the following remark.

\begin{remark}\label{CIdetails}
\begin{itemize}
\item[(a)] $\cR (r-1,2m) \subseteq {\mathcal C}(r,I,2m) \subseteq \cR (r,2m ) $.
\item[(b)] $\cR (r-1,2m) = {\mathcal C}(r,\emptyset,2m) $.
\item[(c)] $\cR (r,2m) = {\mathcal C}(r,M_r,2m) $.
\item[(d)] If $I\subseteq J \subseteq M_r$ then 
${\mathcal C}(r,I,2m) \subseteq {\mathcal C}(r,J,2m) $.
\item[(e)] The eigenvalues of 
	$\sigma $ on 
	${\mathcal C}(r,I,2m) /\cR (r-1,2m)  $ are 
	exactly the elements in 
	$\bigcup_{k\in I} \Theta ^{(r)}_{k} $. 
\item[(f)] $\dim ({\mathcal C}(r,I,2m) ) = \dim (\cR(r-1,2m) )
	+ \sum _{k\in I} |\Theta ^{(r)}_{k} | 
	= \sum _{\ell =0}^{r-1} { 2m \choose \ell }
	+ \sum _{k\in I} |\Theta ^{(r)}_{k} | $
	\\ where $|\Theta ^{(r)}_{k}| $ can be obtained from
	Lemma \ref{cardTh} (c).
	\end{itemize}
\end{remark}

The next proposition can be obtained from the arguments in Section \ref{last} 
as $\Aff ({\mathcal V}_{\FF _4}) \subseteq \Aff ({\mathcal V}) \cap {\mathcal U}_{m} $ 
where ${\mathcal U}_m$ is defined in Definition \ref{Um}. 
It also follows from \cite[Theorem 5.5]{Abdukhalikov}.

\begin{proposition}
	For all $0\leq r \leq 2m$ and all $I\subseteq M_r $ 
	the automorphism group of ${\mathcal C}(r,I,2m)$ 
	contains  $\Aff ({\mathcal V}_{\FF _4} ) $.
\end{proposition}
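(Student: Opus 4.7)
The plan is to reduce to checking invariance on the quotient $W_r := \cR(r,2m)/\cR(r-1,2m)$. Since Reed-Muller codes are $\Aff(\cV)$-invariant (Remark \ref{RMdetails}) and $\Aff(\cV_{\FF_4}) \subseteq \Aff(\cV)$, the group $\Aff(\cV_{\FF_4})$ already preserves both $\cR(r-1,2m)$ and $\cR(r,2m)$ and hence acts on $W_r$. By Remark \ref{CIdetails}, the code $\cC(r,I,2m)$ corresponds to the $\FF_2$-subspace $U_I \subseteq W_r$ whose $\sigma$-eigenvalues (over $\overline{\FF_2}$) are exactly the elements of $\bigcup_{k \in I}\Theta^{(r)}_k$. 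It therefore suffices to show, for each $k \in M_r$, that the subspace $W_{r,k} \subseteq W_r$ spanned by the $\sigma$-eigenvectors with eigenvalues in $\Theta^{(r)}_k$ is $\Aff(\cV_{\FF_4})$-invariant; the proposition will then follow by taking the direct sum $U_I = \bigoplus_{k \in I} W_{r,k}$.

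First I would check that $W_{r,k}$ is actually defined over $\FF_2$: the set $\Theta^{(r)}_k$ is closed under the Frobenius $\zeta^u \mapsto \zeta^{2u}$ because multiplication by $2$ modulo $4^m-1$ cyclically shifts the binary digits of $u$ by one position and thereby swaps $O(u)$ with $E(u)$, preserving both $\wt_2(u)$ and $|O(u)-E(u)|$. Hence $W_r = \bigoplus_{k\in M_r} W_{r,k}$ as $\FF_2[\sigma]$-modules. To establish $\Aff(\cV_{\FF_4})$-invariance of each $W_{r,k}$, I would factor $\Aff(\cV_{\FF_4}) = \cV_{\FF_4} \rtimes \GL(\cV_{\FF_4})$ and handle the two pieces separately. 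For invariance under $\GL(\cV_{\FF_4})$, the key input is that $\eta := \sigma^{(4^m-1)/3}$ acts as multiplication by the scalar $\omega \in \FF_4^*$ (Remark \ref{eta}) and is hence central in $\GL(\cV_{\FF_4})$; its eigenvalue on the $\zeta^u$-eigenspace of $\sigma$ equals $\omega^{u \bmod 3} = \omega^{E(u)-O(u) \bmod 3}$, which for $\zeta^u \in \Theta^{(r)}_k$ depends only on $(m-k)\bmod 3$. Combined with the induced permutation of $\GL(\cV_{\FF_4})$ on the set of $\sigma$-eigenspaces, this yields $\GL(\cV_{\FF_4})$-stability of each $W_{r,k}$.

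The main obstacle is invariance under the translation subgroup $\cV_{\FF_4}$, since translations $t_a$ do not commute with $\sigma$ or $\eta$ and therefore do not preserve the $\sigma$-eigenspace decomposition pointwise. Two routes are available. The first is to invoke \cite[Theorem 5.5]{Abdukhalikov} directly: it characterizes the extended cyclic codes containing $\Aff(\cV_{\FF_4})$ in their automorphism group through a combinatorial condition on the zero set, and the sets $Z_{r,I}$ were defined precisely to satisfy that condition. The second route, postponed to Section \ref{last}, is to verify the inclusion $\Aff(\cV_{\FF_4}) \subseteq \cU_m \cap \Aff(\cV)$ and to show that the $\cC(r,I,2m)$ arise intrinsically as the $\cU_m$-invariant codes sandwiched between $\cR(r-1,2m)$ and $\cR(r,2m)$ (in parallel with the lattice construction of Theorem \ref{latuni}); the desired $\Aff(\cV_{\FF_4})$-invariance is then immediate from the inclusion.
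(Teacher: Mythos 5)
The reduction to the quotient $W_r := \cR(r,2m)/\cR(r-1,2m)$ and the verification that each $\Theta^{(r)}_k$ is Frobenius-closed are both fine, and your two fallback routes for the translations are exactly the two justifications the paper itself cites. The genuine gap is in your self-contained argument for $\GL(\cV_{\FF_4})$-invariance of the pieces $W_{r,k}$.

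You claim that the $\eta$-eigenvalue on the $\zeta^u$-eigenspace, namely $\omega^{E(u)-O(u)\bmod 3}$, ``for $\zeta^u\in\Theta^{(r)}_k$ depends only on $(m-k)\bmod 3$.'' This is not correct. The set $\Theta^{(r)}_k$ is cut out by $|O(u)-E(u)|=m-k$, so $E(u)-O(u)$ takes \emph{both} values $\pm(m-k)$ on $\Theta^{(r)}_k$ (the factor of $2$ in Lemma \ref{cardTh}(c) records exactly this), and hence the $\eta$-eigenvalue on $W_{r,k}$ ranges over the two-element set $\{\omega^{m-k},\omega^{-(m-k)}\}$, not a single value. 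Worse, even this unordered pair fails to separate the various $k\in M_r$ as soon as $m\ge 4$: for $m=4$, $M_+=\{0,2,4\}$ and $m-k\in\{4,2,0\}$, so the $\eta$-eigenvalue sets are $\{\omega,\omega^2\}$, $\{\omega,\omega^2\}$, $\{1\}$, and $W_{r,0}$ and $W_{r,2}$ lie inside the same $\GL(\cV_{\FF_4})$-stable $\eta$-eigenspace sum. Centrality of $\eta$ only gives you stability of the $\eta$-isotypic decomposition of $W_r$, which is strictly coarser than $\bigoplus_k W_{r,k}$ in general, so this argument cannot close. (Also be wary of the phrase ``the induced permutation of $\GL(\cV_{\FF_4})$ on the set of $\sigma$-eigenspaces'': a generic $g\in\GL(\cV_{\FF_4})$ does not normalize $\langle\sigma\rangle$ and therefore does not permute the $\sigma$-eigenspaces at all.)

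Once the $\eta$-argument is removed, what remains of your proposal is precisely the paper's own one-line justification: cite \cite[Theorem 5.5]{Abdukhalikov}, or observe that the lattices ${\mathcal L}(\widehat{(\cC_{\star I})})$ and ${\mathcal L}(\widehat{(\cC_{\star J})})$ are ${\mathcal U}_m$-invariant by Theorems \ref{latuni} and \ref{GammaequalsL} and that $\Aff(\cV_{\FF_4})\subseteq\Aff(\cV)\cap{\mathcal U}_m$ preserves $\Omega$, hence acts on the codes via $\cL_j/\cL_{j+1}\cong\cC_j$. Either of these routes delivers invariance under the whole of $\Aff(\cV_{\FF_4})$ at once, making the separate $\GL(\cV_{\FF_4})$-argument superfluous as well as broken. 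If you want a genuinely independent proof of the $\GL$-part, you would need to bring in the full unitary module structure (i.e.\ that the pieces are the simple constituents $Y_k$ from Remark \ref{Uk}), which is exactly what Section \ref{last} does and is not circumvented by looking at the scalar $\eta$ alone.
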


Applying the BCH bound, we find the following lower bounds on
the minimum distance of the codes ${\mathcal C}(r,I,2m)$.

\begin{theorem}\label{thm:distance}
	Let $1\leq r \leq 2m-1$ and  $I\subseteq M_r $. Then
	$$\dist ({\mathcal C}(r,I,2m) ) 
      \begin{dcases}
	      =2^{2m-r+1} = \dist (\cR (r-1,2m))   & \textup{ if } \{ m, m-1 , m-2\} \cap I = \emptyset \\
	      \ge 2^{2m-r} = \dist (\cR (r,2m)) & \textup{ if } \{ m, m-1 \}  \cap  I \neq \emptyset  \\
	      \ge 3\cdot 2^{2m-r-1}  & \textup{ if }  \{ m , m-2 \} \cap   I  = \{ m-2 \} 
      \end{dcases}
  $$
\end{theorem}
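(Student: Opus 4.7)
The strategy for all three bounds is to apply the BCH bound (Theorem~\ref{BCH}) to the cyclic code $\cC(r,I,2m)^*$ and then transfer to the extended code $\cC(r,I,2m)$. Since every codeword of a binary extended code has even Hamming weight, an odd BCH estimate $\delta$ on $\cC(r,I,2m)^*$ automatically lifts to $\delta+1$ on $\cC(r,I,2m)$, which will convert my BCH runs into the stated even bounds. For each case I will exhibit an initial segment $\zeta^1,\ldots,\zeta^L$ inside $Z_{r,I}$; elements $u$ with $\wt_2(u)\le 2m-r-1$ lie in $Z_r\subseteq Z_{r,I}$ automatically, so the real combinatorial work is to control the exponents of weight exactly $2m-r$ and check that the associated class index $k=m-|O(u)-E(u)|$ is not in $I$. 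Case~2 requires no BCH work at all: by Remark~\ref{CIdetails}(a), $\cC(r,I,2m)\subseteq\cR(r,2m)$, and Remark~\ref{RMdetails}(d) gives $\dist(\cR(r,2m))=2^{2m-r}$.

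For Case~1 I will take $L=2^{2m-r+1}-2$. Any $u\le L$ satisfies $\wt_2(u)\le 2m-r$, so the potentially problematic exponents are the $u$ with $\wt_2(u)=2m-r$, namely those integers $<2^{2m-r+1}$ whose bits in positions $0,\ldots,2m-r$ are all $1$ except at one position $i$. A short parity count depending on whether $i$ is even or odd and on the parity of $r$ yields $|O(u)-E(u)|\in\{0,1,2\}$, hence $k\in\{m,m-1,m-2\}$, and by hypothesis none of these is in $I$. Therefore BCH gives $\dist(\cC(r,I,2m)^*)\ge 2^{2m-r+1}-1$; since this bound is odd, the extension satisfies $\dist(\cC(r,I,2m))\ge 2^{2m-r+1}$. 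The matching upper bound is immediate from $\cR(r-1,2m)\subseteq\cC(r,I,2m)$ (Remark~\ref{CIdetails}(a)) together with $\dist(\cR(r-1,2m))=2^{2m-r+1}$ (Remark~\ref{RMdetails}(d)), so equality holds.

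Case~3 is the main obstacle, as it requires pushing the BCH run past $2^{2m-r+1}-2$. Since $m-2\in I\subseteq M_r$ forces $r$ to be even, I will take the longer initial segment $L=3\cdot 2^{2m-r-1}-2$. Because $L<2^{2m-r+1}-1$, no $u\le L$ has weight exceeding $2m-r$. The delicate point is to show that only one exponent of weight exactly $2m-r$ appears in $\{1,\ldots,L\}$, namely $u=2^{2m-r}-1$: in the sub-range $[1,2^{2m-r}-1]$ this is the unique candidate, while for $2^{2m-r}\le u\le L$ bit $2m-r$ is set and the low part $u-2^{2m-r}\le 2^{2m-r-1}-2$ has 2-weight at most $2m-r-2$ (the string of all ones being excluded by the strict inequality), forcing $\wt_2(u)\le 2m-r-1$. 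For the surviving $u=2^{2m-r}-1$ one computes $O(u)=E(u)=m-r/2$, so $k=m$, and $m\notin I$ by hypothesis, placing $\zeta^u$ in $\Theta^{(r)}_m\subseteq Z_{r,I}$. Consequently BCH delivers $\dist(\cC(r,I,2m)^*)\ge 3\cdot 2^{2m-r-1}-1$, which is odd since $r\le 2m-2$, and extending yields the claimed bound $\dist(\cC(r,I,2m))\ge 3\cdot 2^{2m-r-1}$.
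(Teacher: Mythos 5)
Your proof is correct and follows essentially the same route as the paper: apply the BCH bound to $\cC(r,I,2m)^*$ by verifying that the initial run of powers of $\zeta$ lies in $Z_{r,I}$, with the key combinatorial step being the analysis of $|O(u)-E(u)|$ for the exponents $u$ of $2$-weight exactly $2m-r$. The one place you are slightly more careful than the paper is in making explicit the "odd BCH bound on the cyclic code lifts to $+1$ on the extended code via even weights" step, and in Case~3 showing directly that $u=2^{2m-r}-1$ is the only weight-$(2m-r)$ exponent in the run $\zeta^1,\dots,\zeta^{3\cdot 2^{2m-r-1}-2}$ (the paper instead locates the first bad exponent $3\cdot 2^{2m-r-1}-1$); these are equivalent bookkeeping choices within the same argument.
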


\begin{proof}
Clearly 
$$2^{2m-r} = \dist (\cR (r,2m)) \leq 
\dist ({\mathcal C}(r,I,2m) ) \leq \dist (\cR (r-1,2m)) =2^{2m-r+1} .$$ 
To obtain the minimum distance of $\cR (r-1,2m) $ one uses the 
BCH bound (cf. Theorem \ref{BCH}), showing that 
$$Z:=\{ \zeta ^u \mid 0< u < 2^{2m-r+1}-1 \} $$ 
are in the zero set of $\cR (r-1,2m)^* $ as all these exponents $u$ 
have 2-weight $\leq 2m-r$. 
The zero set of ${\mathcal C}(r,I,2m)^*$ contains all these 
$\zeta ^u \in Z$ with $\wt_2(u) < 2m-r$ and those $\zeta ^u \in Z$ 
with $\wt_2(u) = 2m-r $
such that $|E(u) - O(u) | = m-k $ with $k\not\in I$. 
So let $0< u < 2^{2m-r+1} -1 $ be such that 
$\wt_2(u) = 2m-r $. Then 
$u = \sum _{i=0}^{2m-r} u_i 2^i $ with $u_i=0$ for exactly one $i$. 

If $r$ is odd then one easily concludes that $|O(u) - E(u) | = 1$. 
So if $r$ is odd and $m-1 \not\in I$ then $Z$ is in
the zero set of ${\mathcal C}(r,I,2m)^*$, so the BCH bound 
allows to conclude that $\dist({\mathcal C}(r,I,2m)) = \dist(\cR (r-1,2m)).$

If $r$ is even, then $|O(u) - E(u) | \in \{ 0, 2 \} $, showing 
again that $Z \subseteq Z({\mathcal C}(r,I,2m)^*) $ and 
$\dist({\mathcal C}(r,I,2m)) = \dist(\cR (r-1,2m))$ if 
$I\cap \{ m, m-2 \} = \emptyset $. 
The minimal $u$ such that 
$|O(u) - E(u) | = 2$ is $u= 2^{2m-r-1}-1+2^{2m-r} = 3\cdot 2^{2m-r-1} -1 $ so the BCH bound gives 
$\dist ({\mathcal C}(r,I,2m)) \geq 3\cdot 2^{2m-r-1} $
if $I\cap \{ m, m-2 \} =  \{ m-2 \} $. 
\end{proof}

\section{Unitary invariant sandwiched lattices} 

\subsection{The Barnes-Wall construction} \label{constructD}

To construct the Barnes-Wall lattice $\BW_{2m} \leq \RR ^{2^{2m}}$ 
and related lattices we fix an orthogonal basis 
$$( b_v \mid v\in {\mathcal V}) \mbox{ of } \RR ^{2^{2m}} \mbox{ with } 
(b_v,b_v ) = 2^{-m } .$$
We put $\Omega := \langle b_v \mid v\in {\mathcal V} \rangle _{\ZZ }$ 
to be the lattice spanned by this orthogonal basis. 
Then \cite{BarnesWall} constructs the Barnes-Wall lattices  $\BW _{2m}$ and 
its dual $\BW ^{\#} _{2m} $ 
as lattices $L$ with 
$$2^m \Omega \subseteq L \subseteq \Omega $$ by 
scaling the basis of the Reed-Muller codes given 
in Proposition \ref{basisRM}. 

\begin{definition}(\cite[Theorem 3.1]{BarnesWall})\label{BWdef}
	$$\BW_{2m} := \langle 2^{\lfloor \frac{2m-r+1}{2} \rfloor } \sum_{v\in \cU} b_v  \mid 
	\cU \in {\mathcal T}_r , r=0,\ldots , 2m  \rangle _{\ZZ }$$ 
	is the Barnes-Wall lattice of dimension $2^{2m}$ 
	and its dual lattice is given as
	$$\BW_{2m}^{\#}  = \langle 2^{\lfloor \frac{2m-r}{2} \rfloor } \sum_{v\in \cU} b_v  \mid 
	\cU \in {\mathcal T}_r , r=0,\ldots , 2m  \rangle _{\ZZ }.$$ 
\end{definition}
Note that the generators for the lattices in Definition \ref{BWdef}
form a basis of $\BW_{2m}$ and $\BW_{2m}^{\#} $. 

The parameters for  the Barnes-Wall lattices  are
$$\det (\BW_{2m}) = 2^{2^{2m-1}} , \min (\BW_{2m}) = 2^m , 
\BW_{2m}^{\#} / \BW_{2m} \cong \FF _2^{2^{2m-1}} $$
(see \cite{BarnesWall} and \cite{BroueEnguehard}).

The Barnes-Wall construction in Definition \ref{BWdef} is a very specific
variant of Construction D 
applied to the two chains of Reed-Muller codes: 
$$\begin{array}{rclclclclr}
                (\cR _{2\star }) &  :  & 
     \cR( 0,2m)  & \subset  & \cR( 2,2m) & \subset  & \ldots &  \subset  & \cR( 2m-2,2m)  & \mbox{ and }   \\
         (\cR _{2\star -1}) & : &
         \cR( 1,2m) &  \subset &  \cR( 3,2m) & \subset  & \ldots &  \subset 
	          & \cR( 2m-1,2m)  . \end{array} $$
Note that Construction D in general depends on the chosen basis 
adapted to the chain of codes as explained in detail in \cite{Oggier},
where the authors compare Construction D and D' with Forney's Code-Formula
construction. 
Their main result is \cite[Theorem 1]{Oggier} showing that 
Construction D and Forney's Code-Formula construction yield the 
same lattice if and only if the chain of nested binary codes is 
closed under the Schur product. Only then Construction D 
does not depend on the choice of the basis. 

\begin{warning} 
	For $m\geq 4$ then 
	$(\cR _{2\star } ) $ and $(\cR _{2\star -1 })$ are not 
	closed under the Schur product. 
	So in contrast to many remarks in the literature
	(e.g. \cite[bottom of p. 447]{Oggier})
	the lattice constructed by Construction D from 
	these chains of codes  will depend on the chosen basis. 
\end{warning}

\begin{proof}
Recall that the Schur product is a function 
$\FF _2^n \times \FF _2^n \to \FF _2^n $ mapping 
$(c,d) $ to $c*d$ with $(c*d)_i = c_i d_i $. 
By \cite[Section (13.3)]{MacWilliamsSloane} 
$\cR (r,2m) $ is the set of all vectors $f$, where 
$f(v^*_1,\ldots , v^*_{2m} )$ is a Boolean function, which can
be written as a 
polynomial of degree at most $r$ in the symmetric algebra of ${\mathcal V}^*$. 
So $f$ is a linear combination of $\prod _{i\in I } v^*_i $ 
where $I\subseteq \{1,\ldots, 2m \} $, $|I | \leq r$. 
The Schur product of Boolean functions translates into the 
product of polynomials subject to the relations ${v^*_i}^2=v^*_i$ for all $i$. 
If $m\geq 4$ then $v^*_1v^*_2v^*_3v^*_4 $ and $v^*_5v^*_6v^*_7v^*_8$ are in 
$\cR (4,2m)$ but their product has degree 8, hence does not
belong to $\cR(6,2m)$,  the next member of the chain $(\cR _{2\star })$. 
A similar argument also applies to $(\cR _{2\star -1})$, where it
is enough to assume $m\geq 3$. 
\end{proof}

\subsection{Construction $\Dcyc$ for the Barnes-Wall lattices}

By \cite[Theorem 3.2]{BarnesWall} the affine group
$\Aff ({\mathcal V} )$ 
 acts on the lattice 
$\BW_{2m}$ and its dual lattice $\BW_{2m}^{\# } $ by permuting the
basis vectors $(b_v \mid v\in{\mathcal V}) $. 
This  action also preserves the Reed-Muller codes and 
in particular these codes and 
the lattices $\BW_{2m}$ and $\BW_{2m}^{\#}$ are invariant 
  under the cyclic permutation $\sigma $. 
  Hence also their 
  quotients $\BW_{2m} / 2^m\Omega $ and $\BW_{2m}^{\#} /2^m\Omega $ 
  are invariant under $\sigma $. 
  As the sums of the coefficients in the given basis vectors
  of $\BW_{2m}$ and $\BW_{2m}^{\#} $ sum up to a multiple of $2^{m}$ 
  these are extended cyclic codes in 
  $ \Omega /2^m\Omega  \cong 
  (\ZZ/2^m\ZZ )^{2^{2m}} .$
  In the notation of Section \ref{ccc} Remark \ref{chainbijection} 
  hence tells us 
  $$\BW_{2m} / 2^m\Omega \cong \widehat{(\cR _{2\star } )}  
  \mbox{ and } 
  \BW_{2m}^{\#}  / 2^m\Omega \cong \widehat{(\cR _{2\star -1 } )}  .$$ 

  \begin{remark} \label{BWL}
	  $\BW _{2m} = {\mathcal L} (\widehat{(\cR _{2\star } )} ) $ and 
		  $\BW_{2m}^{\#}  = {\mathcal L} (\widehat{(\cR _{2\star -1 } )})$ are the lattices obtained by 
		  Construction $\Dcyc $ from the two chains 
		  of Reed-Muller codes above.
  \end{remark}

\begin{proposition} \label{eigenquot}
	As $\FF _2 [\sigma ] $-modules we have 
	  $$ \BW_{2m}^{\#} /\BW_{2m}  \cong \bigoplus_{r=0}^{m-1} 
	  \cR( 2r+1,2m) / \cR( 2r,2m) $$ 
	  and
	  $$ \BW_{2m}/2\BW_{2m}^{\#} \cong 
	  \cR (0,2m) \oplus  \bigoplus_{r=1}^{m} \cR( 2r,2m)/\cR( 2r-1,2m) .$$ 
	  The eigenvalues of $\sigma $ on 
	  $\BW_{2m}/2\BW_{2m}^{\#} $ are the elements of 
	  $$ {\Theta }^{(+)} := \{ \zeta ^u  \mid 0\leq u < 2^{2m}-1 
	  \mbox{ of even $2$-weight } \} =\bigcup _{r=1}^{m} \Theta^{(2r)} $$
	  where $\zeta ^0 = 1$ occurs with multiplicity $2$ (and the
	  others with multiplicity $1$) in 
	  $\BW_{2m}/2\BW_{2m}^{\#}$
	  and the one on 
	  $ \BW_{2m}^{\#} /\BW_{2m} $ are 
  the elements of $$ {\Theta }^{(-)} := \{ \zeta  ^u  \mid 0 \leq u <2^{2m}-1 
	  \mbox{ of odd $2$-weight } \} = \bigcup _{r=1}^{m} \Theta^{(2r-1)}$$ 
	  each occurring with multiplicity 1.
  \end{proposition}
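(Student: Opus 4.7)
The plan is to handle the two quotients separately and then read off the eigenvalues from Proposition \ref{eigenvalsR}.

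For $\BW_{2m}^{\#}/\BW_{2m}$, Lemma \ref{eigenquotgenlat} applies directly. By Remark \ref{BWL} we have $\BW_{2m}=\cL(\widehat{(\cR_{2\star})})$ and $\BW_{2m}^{\#}=\cL(\widehat{(\cR_{2\star-1})})$, and the interlacing $\cR(2i,2m)\subseteq \cR(2i+1,2m)\subseteq \cR(2i+2,2m)$ required by the lemma is immediate from Remark \ref{RMdetails}(a). The lemma then yields the first claimed decomposition at once.

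For $\BW_{2m}/2\BW_{2m}^{\#}$, Lemma \ref{eigenquotgenlat} does not apply directly because $2\BW_{2m}^{\#}$ does not contain $2^m\Omega$, hence is not of the form $\cL(\widehat{(\cC_\star)})$ for a chain of length $m$. Instead I exploit the fact that $\sigma$ has order $4^m-1$, coprime to $2$, so $\FF_2[\sigma]$ is semisimple by Maschke's theorem and any finite $\FF_2[\sigma]$-module is determined up to isomorphism by its composition factors. These can be read off from the filtration by powers of $2$: setting $L_j:=\BW_{2m}\cap 2^j\Omega$ and $M_j:=2\BW_{2m}^{\#}\cap 2^j\Omega$ for $0\le j\le m+1$, the inclusion $2\BW_{2m}^{\#}\subseteq \BW_{2m}$ gives $L_j\cap 2\BW_{2m}^{\#}=M_j$, so $N_j:=(L_j+2\BW_{2m}^{\#})/2\BW_{2m}^{\#}\cong L_j/M_j$ is a descending filtration of $\BW_{2m}/2\BW_{2m}^{\#}$ from $N_0=\BW_{2m}/2\BW_{2m}^{\#}$ down to $N_{m+1}=0$. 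A third-isomorphism-theorem computation yields $N_j/N_{j+1}\cong L_j/(L_{j+1}+M_j)$, i.e.\ the cokernel of the natural map $M_j/M_{j+1}\to L_j/L_{j+1}$. By Remark \ref{BWL} together with equation \eqref{quotC}, $L_j/L_{j+1}\cong \cR(2j,2m)$ (with the convention $\cR(2m,2m)=\FF_2^{\cV}$ at $j=m$), while $M_j/M_{j+1}=0$ for $j=0$ and $M_j/M_{j+1}\cong \cR(2j-1,2m)$ for $j\ge 1$; a direct inspection of Definition \ref{BWdef} shows that the natural map realises the standard inclusion $\cR(2j-1,2m)\subseteq \cR(2j,2m)$ inside $\FF_2^{\cV}$. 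Hence the successive quotients are $\cR(0,2m)$ at $j=0$ and $\cR(2j,2m)/\cR(2j-1,2m)$ for $j=1,\ldots,m$; semisimplicity promotes this composition series to the desired direct sum decomposition.

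The eigenvalue statements then drop out of Proposition \ref{eigenvalsR}: the eigenvalues of $\sigma$ on $\cR(r,2m)/\cR(r-1,2m)$ are exactly $\Theta^{(r)}$, and these sets are pairwise disjoint since the $2$-weight determines $r$. Summing over odd $r$ gives $\Theta^{(-)}$ with multiplicity one for $\BW_{2m}^{\#}/\BW_{2m}$. For $\BW_{2m}/2\BW_{2m}^{\#}$ the quotients $\cR(2j,2m)/\cR(2j-1,2m)$ for $j=1,\ldots,m$ contribute $\bigcup_{j=1}^m\Theta^{(2j)}=\Theta^{(+)}$, which already contains $\Theta^{(2m)}=\{1\}$, while the extra summand $\cR(0,2m)=\langle\mathbf{1}\rangle$ contributes the eigenvalue $1$ a second time. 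The main obstacle I anticipate is the step identifying the map $M_j/M_{j+1}\to L_j/L_{j+1}$ as the standard Reed-Muller inclusion, which amounts to tracking the explicit powers of $2$ in the Barnes-Wall generators of Definition \ref{BWdef} through both filtrations simultaneously.
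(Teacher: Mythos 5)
Your proof is correct. The first isomorphism is handled exactly as in the paper, via Lemma \ref{eigenquotgenlat}. For the second, however, you take a genuinely different route: the paper invokes Serre's theorem (that two lattices spanning the same $\QQ[\sigma]$-module have mod-$p$ reductions with the same composition factors) applied to $\BW_{2m}^{\#}$ and $\Omega$, then subtracts $\BW_{2m}^{\#}/\BW_{2m}$ from $\BW_{2m}^{\#}/2\BW_{2m}^{\#}\cong\Omega/2\Omega$. You instead build the filtration $N_j=(L_j+2\BW_{2m}^{\#})/2\BW_{2m}^{\#}$ directly, identify each $N_j/N_{j+1}$ as the cokernel of $\cR(2j-1,2m)\hookrightarrow\cR(2j,2m)$, and then use semisimplicity of $\FF_2[\sigma]$; this is essentially the ``variant of this lemma'' that the paper alludes to but does not carry out. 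Your approach is more elementary and self-contained (no appeal to an external theorem on rational representations), at the cost of the extra bookkeeping with the double filtration by powers of $2$. The one step you flag as a potential obstacle --- that the injective map $M_j/M_{j+1}\to L_j/L_{j+1}$ is the standard inclusion $\cR(2j-1,2m)\subseteq\cR(2j,2m)$ --- is in fact routine: both identifications send $2^j\sum w_ib_i$ to $(\bar w_1,\ldots,\bar w_N)$, so no real difficulty there. The eigenvalue bookkeeping, including the multiplicity $2$ for $\zeta^0=1$ coming from the two summands $\cR(0,2m)$ and $\cR(2m,2m)/\cR(2m-1,2m)$, is correct.
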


  \begin{proof}
	  The isomorphism of $\BW_{2m}^{\#} /\BW_{2m}  $ follows directly by
	  applying Lemma \ref{eigenquotgenlat}. 
	  With a variant of this lemma we may also see the isomorphism 
	  of $ \BW_{2m}/2\BW_{2m}^{\#} $, but this may be also seen
	  from the following consideration:
	  We have $\Omega /2\Omega \cong \cR(2m,2m) = \FF_2^{2^{2m}} $ 
	  as $\FF_2[\sigma ]$-modules. 
	  As $\FF _2[\sigma ]$ is semisimple, it is enough to 
	  compare composition factors so the chain of Reed-Muller codes
	  in Remark \ref{RMdetails} (a) 
	  shows that 
	  $$\Omega/2\Omega \cong \bigoplus _{r=0}^{2m} \cR(r,2m)/\cR(r-1,2m) $$
	  (note that $\cR(-1,2m) = \{0 \}$).
	  Now $\BW_{2m}^{\#}$ and $\Omega $ are lattices in the
	  same $\QQ [\sigma ]$-module, so 
	  $\BW_{2m}^{\#} /2 \BW_{2m}^{\#} $ and $\Omega /2\Omega $ have 
	  the same composition factors (see \cite[Theorem 32]{Serre}), therefore
	  $$\BW_{2m}^{\#} /2\BW_{2m}^{\#}  \cong 
	  \BW_{2m}^{\#} /\BW_{2m} \oplus \BW_{2m}/2\BW_{2m}^{\#} 
	  \cong \bigoplus _{r=0}^{2m} \cR(r,2m)/\cR(r-1,2m) $$ 
	  so 
$\BW_{2m}/2\BW_{2m}^{\#} \cong 
	  \cR (0,2m) \oplus  \bigoplus_{r=1}^{m} \cR( 2r,2m)/\cR( 2r-1,2m) .$
	  The eigenvalues are obtained from 
	  Proposition \ref{eigenvalsR}.
  \end{proof}

  \subsection{Admissible sandwiched lattices}

  \begin{definition}\label{sigmasandwich}
	  A $\sigma $ invariant lattice  $\Gamma $ with 
	  $2\BW _{2m} ^{\#} \subseteq \Gamma \subseteq  \BW _{2m} $ 
	  is said to be {\em admissible}, if 
	  either $1$ does not occur as an eigenvalue of $\sigma $ 
	  on $\Gamma / 2\BW _{2m} ^{\#} $ or 
	  it occurs with multiplicity $2$. 
Let 
$${\mathcal L}_{+} := \{ \Gamma \mid 2\BW _{2m} ^{\#} \subseteq \Gamma \subseteq  \BW _{2m} , \sigma (\Gamma ) = \Gamma , \Gamma \mbox{ admissible}  \} $$ 
and 
$${\mathcal L}_{-} := \{ \Lambda \mid \BW _{2m} \subseteq \Lambda \subseteq  \BW _{2m}^{\#} , \sigma (\Lambda ) = \Lambda  \} $$ 
denote the set of $\sigma $ invariant \emph{admissible sandwiched lattices}. 
  \end{definition}

 By definition, the  admissible sandwiched lattices 
 are  in bijection with  the monic factors in $\FF _2[X]$ 
 of the minimal polynomial  of the action of $\sigma $ on 
 $\BW_{2m}^{\#} /\BW_{2m} $ and $\BW_{2m}/2\BW_{2m}^{\#}$, so by Proposition 
 \ref{eigenquot} 
 with the subsets of 
 ${\Theta }^{(-)}$ resp. ${\Theta }^{(+)}$ that are closed under 
 squaring:

 \begin{proposition}\label{labellat} 
	 \begin{itemize}
		 \item[(a)] 
			 Let $S \subseteq {\Theta }^{(+)} $ 
			 be a Frobenius invariant subset, i.e. $s\in S $ if and only 
			 if $s^2\in S$. 
			 Then there is a unique lattice 
			 $\Gamma \in {\mathcal L}_{+} $ such that 
			 the characteristic polynomial of the action of 
			 $\sigma $ on $\Gamma / 2\BW^{\#}_{2m}  $ is 
			 $\prod _{s\in S} (X-s) \in \FF_2[X] $ if 
			 $1\not\in S $ respectively 
			 $(X-1) \prod _{s\in S} (X-s) \in \FF_2[X] $ if 
			 $1\in S$.
		 \item[(b)] 
			 Let $S \subseteq {\Theta }^{(-)} $ 
			 be a Frobenius invariant subset, i.e. $s\in S $ if and only 
			 if $s^2\in S$. 
			 Then there is a unique lattice 
			 $\Lambda \in {\mathcal L}_{-} $ such that 
			 the characteristic polynomial of the action of 
			 $\sigma $ on $\Lambda  / \BW_{2m} $ is 
			 $\prod _{s\in S} (X-s) \in \FF_2[X] $. 
	 \end{itemize}
 \end{proposition}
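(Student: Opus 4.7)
The plan is to exploit the semisimple $\FF_2[\sigma]$-module structure of the quotients $\BW_{2m}/2\BW_{2m}^{\#}$ and $\BW_{2m}^{\#}/\BW_{2m}$. Since $\sigma$ has order $2^{2m}-1$, which is coprime to $2$, the polynomial $X^{2^{2m}-1}-1$ is separable over $\FF_2$, so $\sigma$ acts semisimply on any finite $\FF_2[\sigma]$-module; each such module decomposes into isotypic components indexed by the Frobenius orbits of its eigenvalues in $\overline{\FF_2}$, where the simple summand attached to an orbit $O$ has $\FF_2$-dimension $|O|$ and characteristic polynomial $\prod_{\lambda \in O}(X-\lambda)$.

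For part (b), Proposition \ref{eigenquot} says that each element of $\Theta^{(-)}$ occurs as an eigenvalue of $\sigma$ on $\BW_{2m}^{\#}/\BW_{2m}$ with multiplicity exactly $1$, so this module is multiplicity-free semisimple. In a multiplicity-free semisimple module, $\FF_2[\sigma]$-submodules are in bijection with subsets of the set of isotypic components, i.e., with unions of Frobenius orbits, which is the same data as a Frobenius-invariant subset $S \subseteq \Theta^{(-)}$. The corresponding submodule has characteristic polynomial $\prod_{s\in S}(X-s)$, and its preimage under $\BW_{2m}^{\#} \twoheadrightarrow \BW_{2m}^{\#}/\BW_{2m}$ is the unique lattice $\Lambda \in \mathcal{L}_-$ claimed.

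For part (a), the same strategy applies to $\BW_{2m}/2\BW_{2m}^{\#}$, with one subtlety: by Proposition \ref{eigenquot} the eigenvalue $1$ occurs with multiplicity $2$ (with one copy coming from $\cR(0,2m)$ and the other from $\cR(2m,2m)/\cR(2m-1,2m)$), while all other eigenvalues in $\Theta^{(+)}$ still occur with multiplicity $1$. The isotypic component for the trivial orbit $\{1\}$ is therefore a two-dimensional $\FF_2$-space on which $\sigma$ acts as the identity, and it admits five $\FF_2[\sigma]$-submodules: $0$, three one-dimensional lines, and the whole space. The admissibility condition is precisely what rules out the three lines (on each of which $1$ would be an eigenvalue with multiplicity exactly $1$), leaving only the two canonical options ``take all or nothing'' that match ``$1 \in S$ or $1 \notin S$''; the former contributes $(X-1)^2$ to the characteristic polynomial, matching the extra factor $(X-1)$ prepended to $\prod_{s\in S}(X-s)$ in the statement. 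The remaining isotypic components are multiplicity-free and contribute exactly as in part (b).

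The only genuine work beyond the standard submodule classification for a semisimple multiplicity-free module is the observation that the admissibility hypothesis is tailored precisely to exclude the non-canonical intermediate submodules in the $1$-isotypic component; once this is understood, both bijections drop out formally.
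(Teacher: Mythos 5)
Your proposal is correct and takes essentially the same approach as the paper's (very terse) justification. The paper asserts, in the sentence before Proposition \ref{labellat}, that the admissible $\sigma$-invariant lattices are in bijection with the monic factors of the minimal polynomial, invoking Proposition \ref{eigenquot}; you supply exactly the missing details: $\FF_2[\sigma]$ is semisimple because $|\sigma| = 2^{2m}-1$ is odd, so submodules of a semisimple multiplicity-free module correspond to unions of Frobenius orbits of eigenvalues, and for part (a) the two-dimensional trivial isotypic component (coming from $\cR(0,2m)$ and $\cR(2m,2m)/\cR(2m-1,2m)$) admits three intermediate lines that are exactly ruled out by the admissibility requirement. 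Your explicit account of how admissibility collapses the five submodules of the trivial isotypic block to the two canonical choices is the crux of why the statement holds, and this is precisely what the paper's argument is implicitly relying on.
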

 
 \subsection{Unitary invariant sandwiched lattices} \label{unitarysand}

 Recall the definition of $M_+$ and $M_-$ in Notation \ref{wt2}. 
  For proper subsets $\emptyset \neq 
  I\subset M_-$  or $\emptyset \neq J\subset M_+$ we put
  $$
  \begin{array}{lclclclclclr}  
	  (\cC _{\star I}) & \ :\  &
	  {\mathcal C}(1, I, 2m) & \subseteq  &
	  {\mathcal C}(3, I, 2m) & \subseteq  &
	   \ldots & \subseteq & 
	  {\mathcal C}(2m-1, I, 2m)  & \mbox{ if } I \subseteq M_-, \\ 
	  (\cC _{\star J}) & \ :\  &
	   {\mathcal C}(0, J, 2m) & \subseteq  &
	   {\mathcal C}(2, J, 2m) & \subseteq  & 
	   \ldots & \subseteq  & 
	   {\mathcal C}(2m-2, J, 2m) & \mbox{ if } m\in J \subseteq M_+, \\ 
	  (\cC _{\star J}) & \ :\  &
	  {\mathcal C}(2, J, 2m) & \subseteq  & 
	  {\mathcal C}(4, J, 2m)&  \subseteq  & 
	  \ldots & \subseteq  & 
	  {\mathcal C}(2m, J, 2m) &  \mbox{ if } m\not\in J \subseteq M_+ . 
  \end{array}
  $$
  Note that for $J\subseteq M_+$ we have
  ${\mathcal C}(2m, J, 2m)  =  \cR (2m,2m) = \FF_2^{2^{2m}} $ if 
  $m\in J$ and ${\mathcal C}(0, J, 2m) = \{ 0\} $ if $m\not\in J$.

  \begin{remark}
	  We will see in Section \ref{last} 
	  that the lattices ${\mathcal L}(\widehat{(\cC _{\star I})} ) $ 
	  and 
	  ${\mathcal L}(\widehat{(\cC _{\star J})} ) $ 
	  constructed
	  from these chains of extended cyclic codes with 
	  Construction $\Dcyc$ 
	  are invariant under the Clifford-Weil group 
	  $${\mathcal U}_m  = \cC _m(4^H_{\bf 1}) \cong 2^{1+4m}_{+} : \GU _{2m}(\FF _4)$$ 
	  associated to 
	  the Type of Hermitian self-dual codes over $\FF _4$ 
	  that contain the all ones vector (see \cite[Proposition 7.3.1]{NebeRainsSloane}).
	  Therefore we call the lattices 
	  ${\mathcal L}(\widehat{(\cC _{\star I})} ) $ and
	  ${\mathcal L}(\widehat{(\cC _{\star J})} ) $,
	  obtained by applying Construction $\Dcyc $  to the 
	  chain of codes  $(\cC_{\star I})$ and $(\cC_{\star J})$ 
	  above 
	  \emph{unitary invariant sandwiched lattices}.
  \end{remark}

  \begin{theorem}\label{sandlatdetails}
	  \begin{itemize}
		  \item[(a)] 
	  If  $\emptyset \neq I\subset M_-$  then
	  ${\mathcal L}(\widehat{(\cC _{\star I})} ) \in {\mathcal L}_{-} $ 
	  and the eigenvalues of $\sigma $ on 
	  ${\mathcal L}(\widehat{(\cC _{\star I})} )/\BW_{2m} $ are the elements
	  of $\bigcup _{k\in I} \Theta _k$.
	  We get 
	  $$\log_2(\det({\mathcal L}(\widehat{(\cC _{\star I})} ))) =
	  2^{2m-1} - 4 \sum _{k\in I} {2m \choose k } .$$
	  If $m-1 \not\in I$, then 
	  $$\min ({\mathcal L}(\widehat{(\cC _{\star I})}  ) = \min (\BW_{2m}) = 2^m .$$
  \item[(b)]
	  For $\emptyset \neq J\subset M_+$ with $m\in J$ then 
	  ${\mathcal L}(\widehat{(\cC _{\star J})} ) \in {\mathcal L}_{+}$ 
	  and the eigenvalues of $\sigma $ on 
	  ${\mathcal L}(\widehat{(\cC _{\star J})} )/2 \BW_{2m}^{\#}  $ are the elements
	  of $\bigcup _{k\in J} \Theta _k$.
	  We get 
	  $$\log_2(\det({\mathcal L}(\widehat{(\cC _{\star J})} ))) =
	  2^{2m-1} + 4 \sum _{k\in M_+\setminus J} {2m \choose k } .$$
  \item[(c)]
	  For $\emptyset \neq J\subset M_+$ with $m\not\in J$ then 
	  $2{\mathcal L}(\widehat{(\cC _{\star J})} ) \in {\mathcal L}_{+}$ 
	  and the eigenvalues of $\sigma $ on 
	  $2{\mathcal L}(\widehat{(\cC _{\star J})} )/ 2\BW_{2m}^{\#}  $ are the elements of $\bigcup _{k\in J} \Theta _k $.
	  We get 
	  $$\log_2(\det(2{\mathcal L}(\widehat{(\cC _{\star J})} ))) =
	  2^{2m-1} + 4 \sum _{m\neq k\in M_+\setminus J} {2m \choose k } + 2 {2m \choose m} .$$
	  If, furthermore, $m-2 \not\in J$ then 
	  $$\min (2{\mathcal L}(\widehat{(\cC _{\star J})}  ) = \min (2\BW^{\#}_{2m}) = 2^{m+1} .$$
	  \end{itemize}
  \end{theorem}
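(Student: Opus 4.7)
The plan is to treat (a), (b), (c) through a uniform pipeline: sandwich inclusion, eigenvalues of $\sigma$ with admissibility, determinant, and minimum; throughout, $\sigma$-invariance is automatic from Construction $\Dcyc$. For the sandwich inclusions, monotonicity of Construction $\Dcyc$ together with Remark \ref{CIdetails}(a) reduces every inclusion to a pointwise chain comparison. In (a), $(\cC_{\star I})$ lies pointwise between $(\cR_{2\star})$ and $(\cR_{2\star-1})$, so Remark \ref{BWL} yields $\BW_{2m}\subseteq\cL\subseteq\BW_{2m}^{\#}$. In (b), the identity $\cC(0,J,2m)=\cR(0,2m)$ (because $m\in J$) places $(\cC_{\star J})$ pointwise below $(\cR_{2\star})$, giving $\cL\subseteq\BW_{2m}$; for the lower inclusion I verify the identity $\cL(p\widehat{(\cD_\star)})=p\cL(\widehat{(\cD_\star)})+p^m\Omega$ (a direct calculation from Definition \ref{33}) and combine it with $p\widehat{(\cR_{2\star-1})}=\widehat{(\{0\},\cR(1,2m),\ldots,\cR(2m-3,2m))}$ (from the proof of Lemma \ref{eigenquotgen}) to identify $2\BW_{2m}^{\#}+2^m\Omega$ with a specific Construction $\Dcyc$ lattice sitting pointwise below $(\cC_{\star J})$, forcing $2\BW_{2m}^{\#}\subseteq\cL$. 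Part (c) is the shifted mirror: $(\cC_{\star J})$ sits pointwise above $(\cR_{2\star-1})$ with equality at the top slot since $\cC(2m,J,2m)=\cR(2m-1,2m)$, so $\BW_{2m}^{\#}\subseteq\cL$; doubling yields the shifted chain $\{0\},\cC(2,J,2m),\ldots,\cC(2m-2,J,2m)$, which sits pointwise below $(\cR_{2\star})$, so $2\cL\subseteq 2\cL+2^m\Omega\subseteq\BW_{2m}$.

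For (a), Lemma \ref{eigenquotgenlat} applied to $(\cR_{2\star})\subseteq(\cC_{\star I})$ yields $\cL/\BW_{2m}\cong\bigoplus_i\cC(2i+1,I,2m)/\cR(2i,2m)$; Remark \ref{CIdetails}(e) combined with $\Theta_k=\bigcup_r\Theta^{(r)}_k$ (with no boundary issues since $I\subseteq M_-$ forces $k<m$) delivers the eigenvalue set $\bigcup_{k\in I}\Theta_k$. For (c) the analogous argument on $(\cR_{2\star-1})\subseteq(\cC_{\star J})$ gives $2\cL/2\BW_{2m}^{\#}\cong\cL/\BW_{2m}^{\#}\cong\bigoplus_i\cC(2i+2,J,2m)/\cR(2i+1,2m)$ with eigenvalues $\bigcup_{k\in J}\Theta_k$; admissibility is automatic since $m\notin J$ forces $1\notin\bigcup_{k\in J}\Theta_k$. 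Part (b) is the delicate case: a naive application of Lemma \ref{eigenquotgenlat} to $\cL$ and the shifted chain undercounts $\dim_{\FF_2}(\cL/2\BW_{2m}^{\#})$ by exactly one, because $2\BW_{2m}^{\#}$ itself is not a Construction $\Dcyc$ lattice (it fails $2^m\Omega\subseteq 2\BW_{2m}^{\#}$). The cure is to compute $\BW_{2m}/\cL\cong\bigoplus_i\cR(2i,2m)/\cC(2i,J,2m)$ instead (its eigenvalues are $\bigcup_{k\in M_+\setminus J}\Theta_k$, none equal to $1$ because $m\in J$ means $m\notin M_+\setminus J$) and subtract inside the decomposition of $\BW_{2m}/2\BW_{2m}^{\#}$ furnished by Proposition \ref{eigenquot}; the two copies of the eigenvalue $1$ there, one from $\cR(0,2m)$ and one from $\cR(2m,2m)/\cR(2m-1,2m)$, both survive, giving $\cL/2\BW_{2m}^{\#}$ the eigenvalue set $\bigcup_{k\in J}\Theta_k$ with $1$ of multiplicity $2$, i.e.\ the admissibility condition. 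The determinant formulas then follow from Proposition \ref{detLL}, Remark \ref{CIdetails}(f), and Lemma \ref{cardTh}, with (c) further invoking the identity $2\sum_{k\in M_+,\,k\ne m}\binom{2m}{k}+\binom{2m}{m}=2^{2m-1}$ to reconcile $\log_2\det(2\cL)$ with the stated form.

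Theorem \ref{constD} combined with Theorem \ref{thm:distance} closes out the minima. In (a) with $m-1\notin I$, the hypothesis $I\subseteq M_-$ already excludes $m$ and $m-2$, so $\{m,m-1,m-2\}\cap I=\emptyset$ triggers the first case of Theorem \ref{thm:distance}: $\dist(\cC(2i+1,I,2m))=2^{2m-2i}$, hence $\gamma=1$ and $\min\cL\ge 2^m$, matched from above by $\cL\supseteq\BW_{2m}$. In (c) with $m-2\notin J$, the same case gives $\dist(\cC(2i+2,J,2m))\ge 2^{2m-2i-1}$ for $i<m-1$, while the last slot $\cC(2m,J,2m)=\cR(2m-1,2m)$ has distance $2$; feeding $\gamma=2$ into Theorem \ref{constD} yields $\min\cL\ge 2^{m-1}$, hence $\min 2\cL\ge 2^{m+1}$, matched from above by $2\cL\supseteq 2\BW_{2m}^{\#}$. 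The main obstacle throughout is the dimensional mismatch in (b) noted above, which forces one to bypass the direct chain-wise computation of $\cL/2\BW_{2m}^{\#}$ and instead go through the composition-factor decomposition of $\BW_{2m}/2\BW_{2m}^{\#}$ in Proposition \ref{eigenquot} to recover the missing copy of the eigenvalue $1$ needed for admissibility.
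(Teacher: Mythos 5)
Your part (a) reproduces the paper's proof verbatim: Lemma \ref{eigenquotgenlat} applied to the pair $(\cR_{2\star})\subseteq(\cC_{\star I})$ together with Remark \ref{CIdetails}(e) gives the eigenvalues, Proposition \ref{detLL} with Lemma \ref{cardTh} gives the determinant, and Theorems \ref{constD} and \ref{thm:distance} give the minimum when $m-1\notin I$. The paper dismisses (b) and (c) as ``very similar,'' and your handling of (c) is indeed the mirror image of (a): compute $2\cL/2\BW_{2m}^{\#}\cong\cL/\BW_{2m}^{\#}$ directly from $(\cR_{2\star-1})\subseteq(\cC_{\star J})$, observe that $\cC(2m,J,2m)=\cR(2m-1,2m)$ has distance $2$ so $\gamma=2$ works in Theorem \ref{constD}. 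Where you add genuine value is in flagging that (b) is not quite ``the same'': $2\BW_{2m}^{\#}$ fails to contain $2^m\Omega$ (since $\cR(2m-1,2m)\ne\FF_2^{2^{2m}}$ is the top of $(\cR_{2\star-1})$), so it is not a Construction $\Dcyc$ lattice, and a direct application of Lemma \ref{eigenquotgenlat} would only produce $\cL/(2\BW_{2m}^{\#}+2^m\Omega)$, undercounting the eigenvalue $1$ by one. Your fix --- compute $\BW_{2m}/\cL\cong\bigoplus_i\cR(2i,2m)/\cC(2i,J,2m)$ (eigenvalues $\bigcup_{k\in M_+\setminus J}\Theta_k$, which misses $1$ since $m\in J$) and subtract inside the decomposition of $\BW_{2m}/2\BW_{2m}^{\#}$ from Proposition \ref{eigenquot} --- is the correct way to recover both copies of the eigenvalue $1$, hence admissibility, and produces exactly the stated eigenvalue set $\bigcup_{k\in J}\Theta_k$. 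This is still the paper's toolkit, but more carefully deployed than the literal ``do the same as (a)'' would suggest; the sandwich inclusions you check via $\Phi^{-1}(pS)=p\Phi^{-1}(S)+p^m\Omega$ are also correct and worth making explicit.
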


  \begin{proof}
     Here we only present the proof of (a), as (b) and (c) can be proved very similarly.
    For (a), from Remark~\ref{BWL} we know that $\BW_{2m}={\mathcal L}(\widehat{(\cR _{2\star})} )$.
    Note that the sequences $(\cC_{\star I})$ and $(\cR_{2\star})$ satisfy the condition of
  Lemma~\ref{eigenquotgenlat}. Hence
    $$
    \frac{{\mathcal L}(\widehat{(\cC _{\star I})} )}{\BW_{2m}}
   \cong \frac{\cC(1,I,2m)}{\cR(0,2m)}\oplus \frac{\cC(3,I,2m)}{\cR(2,2m)}\oplus\cdots\oplus \frac{\cC(2m-1,I,2m)}{\cR(2m-2,2m)}
$$
as $\FF_{2}[\sigma]$-modules. By (e) of Remark~\ref{CIdetails} it follows that the eigenvalues of $\sigma$ on
 ${\mathcal L}(\widehat{(\cC _{\star I})} )/\BW_{2m} $ are the elements
   of $\bigcup _{k\in I} \Theta _k$.
    Now the determinant follows directly by Lemma~\ref{cardTh}.
  As ${\mathcal L}(\widehat{(\cC _{\star I})} )\supseteq \BW_{2m}$, we have
$\min ({\mathcal L}(\widehat{(\cC _{\star I})}  ) \leq \min (\BW_{2m}) = 2^m.$
If $m-1\not\in I$, then by Theorems~\ref{constD} and~\ref{thm:distance},
        $\min ({\mathcal L}(\widehat{(\cC _{\star I})}  ) \geq 2^m.$
   This concludes our proof.
  \end{proof}
 
\section{Automorphism groups}

\subsection{The automorphism group of the Barnes-Wall lattices}
The automorphism groups of the Barnes-Wall lattices 
have been described by Brou\'e and Enguehard 
and independently in a series of papers by Barnes, Wall, Bolt, and 
Room. 

\begin{theorem} (\cite{BroueEnguehard}, \cite[Theorem 3.2]{Wall}) 
	${\mathcal G}_{2m} := \Aut (\BW_{2m}) = 2^{1+4m}_+. O_{4m}^+(2) $.
\end{theorem}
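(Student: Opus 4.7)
The plan is to prove both containments. For $2^{1+4m}_+ . O_{4m}^+(2) \subseteq \Aut(\BW_{2m})$, I first produce an explicit extraspecial subgroup $E \cong 2^{1+4m}_+$ acting on $\RR^{2^{2m}} = \bigoplus_{v \in \cV} \RR b_v$ by the translations $T_w : b_v \mapsto b_{v+w}$ ($w \in \cV$) and the sign operators $D_\chi : b_v \mapsto (-1)^{\chi(v)} b_v$ ($\chi \in \cV^*$). The commutator relation $[T_w, D_\chi] = (-1)^{\chi(w)} \mathrm{id}$ together with $T_w^2 = D_\chi^2 = 1$ exhibits $E/Z(E) \cong \cV \oplus \cV^* \cong \FF_2^{4m}$ as a nondegenerate $\FF_2$-quadratic space of plus type, so that $\textup{Out}(E) = O_{4m}^+(2)$. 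A direct check against the generators of $\BW_{2m}$ from Definition \ref{BWdef} shows that both families of operators preserve the lattice, so $E \le \Aut(\BW_{2m})$.

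To realise $\textup{Out}(E) = O_{4m}^+(2)$ inside $\Aut(\BW_{2m})/E$, I use that the stabiliser of the Lagrangian $\cV \subset \cV \oplus \cV^*$ in $O_{4m}^+(2)$ equals $\Aff(\cV) = \cV : \GL_{2m}(\FF_2)$, which acts on $\BW_{2m}$ by permuting the basis vectors $(b_v)$ (\cite[Theorem~3.2]{BarnesWall}). To complete the orthogonal group it suffices to exhibit one further lattice automorphism $F$ whose induced outer action swaps $\cV$ with $\cV^*$; a natural choice is a suitably normalised Hadamard-Fourier transform on $\RR^{2^{2m}}$, whose invariance on $\BW_{2m}$ is a classical computation (cf.\ \cite{BroueEnguehard}). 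Together $\Aff(\cV)$ and $F$ project onto generators of $O_{4m}^+(2)$, establishing the first inclusion.

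For the reverse inclusion, the key step is that $E$ is \emph{normal} in $\Aut(\BW_{2m})$. Once this is known, $E$ acts absolutely irreducibly on $\QQ^{2^{2m}}$ (its faithful irreducible representation has degree $2^{2m}$), so its centraliser in $\GL_{2^{2m}}(\QQ)$ is scalar, and the only scalars stabilising $\BW_{2m}$ are $\pm 1 = Z(E)$; hence $\Aut(\BW_{2m})/E$ embeds into $\textup{Out}(E) = O_{4m}^+(2)$, and the previous paragraph forces equality. Normality of $E$ is shown in the spirit of Bolt-Room-Wall by giving an intrinsic characterisation of the involutions in $E \setminus Z(E)$ within $\Aut(\BW_{2m})$, for instance via their traces on $\RR^{2^{2m}}$ or via the geometry of their fixed and $(-1)$-eigenlattices with respect to the set of minimal vectors of $\BW_{2m}$, and then observing that this characterisation is invariant under conjugation by $\Aut(\BW_{2m})$.

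The main obstacle is precisely this intrinsic recognition of $E$. The inclusion $E \le \Aut(\BW_{2m})$ is a routine verification on the generators in Definition \ref{BWdef}, and the reduction from ``$E$ normal'' to the full structure is a standard Clifford-theoretic argument. The substantive content of the theorem lies in ruling out that $\Aut(\BW_{2m})$ contains an extraspecial subgroup not conjugate to $E$; this forces one to exploit detailed information about the short vectors of $\BW_{2m}$ and the available involution classes, which is where the original arguments of \cite{BroueEnguehard} and \cite{Wall} become technical.
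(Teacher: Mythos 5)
The paper does not prove this theorem: it is stated as a citation to Brou\'e--Enguehard and to Wall, and the body of the paper simply uses it. So there is no ``paper's own proof'' to compare against; what you have written is a sketch of the classical argument from those references.

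As a sketch, the skeleton is right. The explicit extraspecial group $E$ generated by the $T_w$ and $D_\chi$, the computation $(T_w D_\chi)^2 = (-1)^{\chi(w)}$ identifying $E/Z(E)$ with the hyperbolic quadratic space $\cV \oplus \cV^*$ over $\FF_2$, the realisation of the Lagrangian stabiliser $\Aff(\cV)$ inside $\Aut(\BW_{2m})$, and the use of a Fourier/Hadamard transform to swap $\cV$ with $\cV^*$ are all correct and give the forward inclusion $2^{1+4m}_+ . O_{4m}^+(2) \subseteq \Aut(\BW_{2m})$. The Clifford-theoretic reduction of the reverse inclusion to ``$E \trianglelefteq \Aut(\BW_{2m})$'' via absolute irreducibility of $E$ and $\Aut(\BW_{2m})/E \hookrightarrow \Out(E) = O_{4m}^+(2)$ is also correct.

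However, the normality of $E$ is exactly the substantive content of the theorem, and your proposal does not prove it. The sentence promising an ``intrinsic characterisation of the involutions in $E \setminus Z(E)$ $\ldots$ via their traces $\ldots$ or via the geometry of their fixed and $(-1)$-eigenlattices'' is a description of what a proof would have to accomplish, not an argument. In particular you would need to show that an arbitrary automorphism of $\BW_{2m}$ conjugates the specific elements $T_w D_\chi$ back into $E$, and a trace condition alone will not do this (many elements of $\Aut(\BW_{2m})$ of order $2$ have trace $0$ yet lie outside $E$; the distinguishing feature is finer, involving the interaction of the eigenlattices with $\Min(\BW_{2m})$). So taken at face value the proposal reproduces the framework of Brou\'e--Enguehard and Bolt--Room--Wall but leaves the key lemma unproved. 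Since the paper itself treats the theorem as a black box, your sketch is a reasonable gloss on the cited literature, but it should not be presented as a self-contained proof.
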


Here $O_{4m}^+(2)$ is the orthogonal group of a quadratic form $q$ 
of dimension $4m$ over $\FF _2$ and Witt defect 0. 
Let ${\mathcal E}_{2m} \cong 2^{1+4m}_+ \leq {\mathcal G}_{2m} $ denote
the maximal normal 2-subgroup of ${\mathcal G}_{2m}$. 
Then $Z:=Z({\mathcal E}_{2m} ) \cong C_2 $ and 
$$q: {\mathcal E}_{2m}/Z \to Z,  xZ \mapsto x^2  $$ 
can be viewed as the $O_{4m}^+(2)$ invariant quadratic form. 
The affine group $\Aff ({\mathcal V})$ acts as orthogonal 
mappings on $\RR ^{2^{2m}}$ by permuting
the basis vectors $(b_v \mid v \in {\mathcal V})$. This action 
stabilizes the Barnes-Wall lattice, so $\Aff ({\mathcal V}) \leq 
{\mathcal G}_{2m} $.
In fact this embedding is made explicit in \cite[Lemma 3.2]{BarnesWall}. 
The additive group of ${\mathcal V}$ can be seen as a maximal
isotropic subgroup $\FF_2^{2m} \leq {\mathcal E}_{2m} $ with respect to
the quadratic form $q $ from above 
and $\GL({\mathcal V})$ is its stabilizer in the orthogonal group of $q$. 
In particular we obtain an explicit
elements $\sigma $ and  $\eta = \sigma ^{(4^m-1)/3}$ 
(from Remark \ref{eta})  in  ${\mathcal G}_{2m} $. 

\begin{definition}\label{Um}
	Define 
	${\mathcal U} _{m}  \leq {\mathcal G}_{2m}$ to be the 
	normaliser in ${\mathcal G}_{2m} $ of 
	${\mathcal E}_{2m}  :\langle \eta \rangle   $.
\end{definition} 

Note that 
$\eta $ defines an $\FF _4$-linear structure on $\FF _2^{4m} $ (similar as in Remark \ref{eta}) 
turning the natural quadratic $O_{4m}^+(2)$-module 
into a Hermitian space over $\FF _4$. 
Then ${\mathcal U}_{m} \cong  {\mathcal E}_{2m}. \GU_{2m}(\FF _4)  $ 
is the extension of ${\mathcal E}_{2m}$ 
by the semi-linear unitary group $\GU_{2m}(\FF _4)  $ of this Hermitian space. 
Intersecting the subgroup 
$\Aff ({\mathcal V})$  of ${\mathcal G}_{2m}$ with  ${\mathcal U}_m $
we find that $\Aff ({\mathcal V}_{\FF _4} ) \leq {\mathcal U}_m$.

One name for ${\mathcal G}_{2m}$ is  Clifford collineation group, 
because  the modules 
$$ \BW_{2m}/2\BW_{2m}^{\#} \cong \FF_2^{2^{2m-1}} \mbox{ and }  
 \BW_{2m}^{\#} /\BW_{2m} \cong \FF_2^{2^{2m-1}}  $$
 are simple modules 
  for the even Clifford algebra.
  In particular $ \BW_{2m}/2\BW_{2m}^{\#} $ and $ \BW_{2m}^{\#} /\BW_{2m}$ are  simple $\FF_2 {\mathcal G}_{2m} $-modules
  (called a spin representation) 
  having ${\mathcal E}_{2m}$ in their kernel. 
  So  ${\mathcal E}_{2m}$ is in the automorphism group of every 
  sandwiched lattice 
  $L \in {\mathcal L}_{+} \cup {\mathcal L} _{-} $.
  Our aim is to construct all admissible sandwiched lattices  $L$ that 
  are invariant under 
  ${\mathcal U} _{m}  .$
  By \cite[Theorem 1.3 (A2)]{Tiep} these lattices $L$ are universally 
  strongly perfect as will be explained in Section \ref{usp} below. 
  To describe the lattices we need to restrict the spin representation
  of the orthogonal group $O_{4m}^+(2)$ to its subgroup $\GU_{2m}(\FF _4)$ 
  which is the topic of the next  paragraph.

\subsection{The spin representations of the orthogonal group.} 

The results of this section might be well known, but  we did not find them
explicitly 
in the literature. We follow the exposition of the textbook 
\cite{FultonHarris}, in particular \cite[Chapter 20]{FultonHarris},
  and  thank Jan Frahm for helpful hints. 
To avoid extra complications we restrict to the relevant case 
and only consider the algebraic group $G:=O_{4m}^+$. 
This is the automorphism group of a split quadratic space
$Q$ of dimension $4m$.
The Clifford algebra $C(Q)$ is the split central simple algebra of 
dimension $2^{4m} $ and 
$G$ acts on $C(Q)$ as algebra automorphisms preserving the 
even subalgebra 
$C_0(Q) $. 
 This action gives rise to a (projective) representation of 
 $G$ on the simple $C(Q)$-module $V$ of dimension $2^{2m} $ 
 which is in fact a linear representation  of the spin group 
$\Spin _{4m}$ and decomposes 
as the direct sum of 
 two non-isomorphic absolutely irreducible representations
 $$ V = V_+ \oplus V_-$$ called the even and odd spin representations 
 of $G$ each of dimension $2^{2m-1}$ 
 (see \cite[Proposition 20.15]{FultonHarris}).

  \cite[Proposition 20.15]{FultonHarris} analyses the modules $V_+$ and $V_-$ 
  and computes the weights occurring in these modules. 
  This allows to find the decomposition of the restrictions of the
  spin representations to the  general linear 
  unitary group $U_{2m}\leq \SO_{4m}^+$.
To state the result let 
$\chi $ be the linear character of a suitable covering group of 
$U_{2m}$ defined by $\chi(g):= (\det (g) )^{1/2} $ and 
$$\Delta  = \Delta _+ + \Delta _- : \Spin _{4m} \to \GL(V) $$ denote the spin representations of $\SO_{4m}^+$. 

\begin{theorem}
The restriction of 
$\chi \otimes \Delta $ is a linear representation of $U_{2m} $ with 
$$\chi \otimes \Delta \cong \bigoplus _{k=0}^{2m} \Lambda ^k (W ) $$
where $W$ denotes the natural $U_{2m}$-module.
In this decomposition 
$$\chi \otimes \Delta _+ \cong \bigoplus _{k=0}^{m} \Lambda ^{2k} (W ) 
\mbox{ and } 
\chi \otimes \Delta _- \cong \bigoplus _{k=1}^{m} \Lambda ^{2k-1} (W ) .
$$
\end{theorem}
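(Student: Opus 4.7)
The approach is to combine the standard model of the spin representations on the exterior algebra of a Lagrangian subspace with a direct computation of weights. Following \cite[Chapter 20]{FultonHarris}, I would fix a Witt decomposition $Q \cong W \oplus W^*$ into two maximal totally isotropic subspaces of dimension $2m$. The Clifford algebra $C(Q)$ then acts on $\Lambda^* W = \bigoplus_{k=0}^{2m} \Lambda^k W$ via exterior multiplication by elements of $W$ and contraction by elements of $W^*$. This identifies the simple module $V$ with $\Lambda^* W$, so that $V_+$ and $V_-$ are the sums of even and odd exterior powers respectively.

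Next, the subgroup $U_{2m} \leq \SO_{4m}^+$ can be characterised as the Levi factor of the parabolic subgroup stabilising the Lagrangian $W$. Under this identification, $U_{2m}$ acts on $\Lambda^* W$ through the natural action on $W$, preserving the grading and hence decomposing as $\bigoplus_{k=0}^{2m} \Lambda^k W$. However, this natural action is \emph{not} literally the restriction of $\Delta$: the spin representation is only projective on $\SO_{4m}^+$, and lifting it to $\Spin_{4m}$ produces a double cover $\widetilde{U}_{2m}$ of $U_{2m}$ on which the central $\pm 1$ acts nontrivially. Twisting by $\chi = \det^{1/2}$, which is itself defined on a double cover of $U_{2m}$, should trivialise this scalar action so that $\chi \otimes \Delta$ descends to an honest representation of $U_{2m}$ agreeing with the natural action on $\Lambda^* W$.

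To verify this rigorously I would use weight combinatorics. Pick a maximal torus $T$ of $U_{2m}$, which is simultaneously a maximal torus of $\SO_{4m}^+$ in the chosen coordinates, with standard weights $L_1,\ldots,L_{2m}$ (the weights of the natural module $W$). By \cite[Proposition 20.15]{FultonHarris} the weights of $\Delta_\pm$ are
$$\tfrac12(\varepsilon_1 L_1 + \cdots + \varepsilon_{2m} L_{2m}), \qquad \varepsilon_i \in \{\pm 1\},$$
where $|\{i : \varepsilon_i = -1\}|$ is even for $\Delta_+$ and odd for $\Delta_-$. Since $\chi$ contributes the weight $\tfrac12(L_1+\cdots+L_{2m})$, the weights of $\chi \otimes \Delta$ are precisely $\sum_{i \in S} L_i$ for $S \subseteq \{1,\ldots,2m\}$, each with multiplicity one, matching exactly the weights of $\Lambda^{|S|} W$. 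The $\Lambda^k W$ for $0 \le k \le 2m$ are pairwise non-isomorphic irreducible $U_{2m}$-modules, so by complete reducibility this forces $\chi \otimes \Delta \cong \bigoplus_{k=0}^{2m} \Lambda^k W$.

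Finally, the parity refinement follows because $2m$ is even: $|S| = 2m - |\{i : \varepsilon_i = -1\}|$ has the same parity as the number of minus signs. Hence $\Delta_+$ contributes the terms with $|S|$ even, giving $\chi\otimes\Delta_+ \cong \bigoplus_{k=0}^m \Lambda^{2k}W$, while $\Delta_-$ contributes the terms with $|S|$ odd, giving $\chi\otimes\Delta_- \cong \bigoplus_{k=1}^m \Lambda^{2k-1}W$. The main technical obstacle will be the careful bookkeeping of the two distinct double covers involved—the spin cover of $\SO_{4m}^+$ and the metalinear cover of $U_{2m}$ defining $\chi$—and checking that the tensor $\chi\otimes\Delta$ genuinely descends to $U_{2m}$; everything else reduces to a transparent weight comparison.
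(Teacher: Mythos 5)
Your rigorous argument — comparing the half-integral weights of $\Delta_\pm$ from \cite[Proposition 20.15]{FultonHarris} with the $\{0,1\}$-valued weights obtained after twisting by $\chi$, then matching parities using that $2m$ is even — is exactly the paper's proof. The preliminary discussion of the exterior-algebra model $\Lambda^\ast W$ with Clifford action by wedging and contraction, and of the metalinear double cover through which $\chi\otimes\Delta$ descends, is sound additional motivation, but the paper skips it and goes straight to the weight comparison you give in your third paragraph.
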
 

\begin{proof}
The weight lattice of the Lie algebra $\so_{4m}$ is the dual lattice
$D_{2m}^{\# }$ of the even sublattice of the standard lattice. 
So the weights are of the form 
$$(k_1,\ldots , k_{2m}) \in \ZZ ^{2m} \cup (\frac{1}{2} +\ZZ )^{2m} .$$
The proof of  \cite[Proposition 20.15]{FultonHarris}  exhibits explicit
weight vectors of the spin representation $\Delta $ for all 
$2^{2m}$ weights $(\pm \frac{1}{2} ,\ldots , \pm \frac{1}{2} ) $. 
A maximal torus in the subgroup $U_{2m}$ of $\SO_{4m}^+$ has the same rank,
so all these weights are distinct when restricted to the subalgebra.
The weight of $\chi $ is $(\frac{1}{2},\ldots , \frac{1}{2} ) $
and so the weights occurring in the restriction of 
$\chi \otimes \Delta $ to $U_{2m}$ are exactly the orbits under 
the symmetric group $S_{2m}$  of 
$$w_k:= (\underbrace{1,\ldots , 1}_k, \underbrace{0,\ldots , 0}_{2m-k}) \mbox{ for }
k=0,1,\ldots , 2m $$
where the $w_k$ for even $k$ occur in $\chi \otimes \Delta _+$ and 
those for odd $k$ in $\chi \otimes \Delta _-$. 
As $w_k$ is the highest weight of the representation $\Lambda ^k(W)$ 
the result follows.
\end{proof}

We now apply this result that is true for algebraic groups
to our special situation by restricting the representations to 
the finite groups of Lie type 
$O_{4m}^{+}(\FF _2)  \geq U _{2m}(\FF_4) $. 
In abuse of notation we denote by $V_+$ and $V_-$ the restriction
of the even and odd spin representations to $O_{4m}^+(\FF _2)$.
These are linear representations of this finite group. 
Also $\det ^{-1/2} = \det  : U _{2m}(\FF_4) \to \FF _4 ^* $ 
is a well defined linear representation. 
We put $W \cong \FF_4^{2m}$ the natural $U _{2m}(\FF_4)$ module.

  \begin{corollary} \label{decomp} 
	  The restriction of $V_+$ (resp. $V_-$) to the 
	  general unitary group is 
	  isomorphic to 
	  $$(V_+)_{|U_{2m} (\FF _4) } \cong \bigoplus _{k=0}^m \det  \otimes \Lambda ^{2k} (W)  \mbox{ resp. } 
	  (V_-)_{|U_{2m} (\FF _4) } \cong \bigoplus _{k=1}^m \det \otimes 
\Lambda ^{2k-1}( W)  $$
  \end{corollary}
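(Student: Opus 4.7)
The plan is to deduce the finite-group statement by restricting the rational-representation isomorphism of the preceding theorem from the algebraic group $U_{2m}$ to its group of $\FF_4$-rational points. Both sides of the decomposition $\chi\otimes \Delta_\pm\cong\bigoplus_k\Lambda^{k}(W)$ are rational representations of algebraic groups defined over $\FF_2$, so the isomorphism persists after passing to the finite-group level, provided one interprets the ``covering-group'' character $\chi=(\det)^{1/2}$ correctly on the finite unitary group.

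The key observation is that on $U_{2m}(\FF_4)$ the determinant takes values in a group of order $3$. Indeed, $\det(g)\overline{\det(g)}=1$ forces $\det(g)$ to lie in the norm-one subgroup of $\FF_4^{*}$ for the extension $\FF_4/\FF_2$, and since the norm is $x\mapsto x^3$ and $\FF_4^{*}$ has order $3$, this subgroup is all of $\FF_4^{*}\cong C_3$. In the cyclic group $C_3$ the squaring map is a bijection whose inverse is again squaring, so a canonical square root exists: $\det^{1/2}=\det^{2}=\det^{-1}$, equivalently $\det^{-1/2}=\det$, as asserted in the setup of the corollary. Hence on $U_{2m}(\FF_4)$ the algebraic-group character $\chi$ restricts to $\det^{-1}$, and no genuine cover is needed.

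Substituting $\chi=\det^{-1}$ into the theorem, restricting to $\FF_4$-points, and separating the even and odd parts yields
$$\det^{-1}\otimes (V_+)_{|U_{2m}(\FF_4)}\cong\bigoplus_{k=0}^m\Lambda^{2k}(W)\quad\textup{and}\quad\det^{-1}\otimes (V_-)_{|U_{2m}(\FF_4)}\cong\bigoplus_{k=1}^m\Lambda^{2k-1}(W).$$
Tensoring both sides with the one-dimensional representation $\det$ produces exactly the claimed decompositions.

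The only real obstacle is the bookkeeping for the square-root character $\chi$: one has to verify that the algebraic-group statement, which a priori lives only on a double cover of $U_{2m}$, descends to a well-defined isomorphism of representations of the finite group $U_{2m}(\FF_4)$ itself. The order-$3$ argument above resolves this issue cleanly, and the remainder of the proof is a direct substitution into the preceding theorem.
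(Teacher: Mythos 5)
Your proposal is correct and follows essentially the same route as the paper, which passes from the algebraic-group theorem to $\FF_4$-points and observes that $\det^{-1/2}=\det$ is well defined on $U_{2m}(\FF_4)$ because $\det$ takes values in $\FF_4^*\cong C_3$, where squaring is its own inverse. The only minor redundancy in your write-up is the norm-one discussion: since $g\in U_{2m}(\FF_4)$ is already a matrix over $\FF_4$, one has $\det(g)\in\FF_4^*\cong C_3$ automatically, without needing $\det(g)\overline{\det(g)}=1$; the sole fact required is $|\FF_4^*|=3$.
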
 

To simplify notation we denote by 
$$W_k := \det \otimes \Lambda ^k(W) .$$

\begin{remark}\label{Uk}
	The semi-linear unitary group 
	$\GU_{2m} (\FF _4)  = U_{2m} (\FF _4) : 2$ is the extension 
	of the full unitary group $U_{2m}(\FF _4)$ by the 
	Galois group of $\FF_4$ over $\FF _2$. The latter 
	interchanges the two modules $W_k$ and $W_{2m-k}$ and fixes $W_m$. 
For $0\leq k \leq m-1$ the $\FF _2 \GU _{2m } (\FF _4) $ modules
$$Y_k \mbox{ with } (Y_k ) _{|U_{2m}(\FF _4) }= W_k \oplus W_{2m-k } 
\mbox{ and } Y_m \mbox{ with } (Y_m )_{|U_{2m}(\FF _4) } = W_m$$
are  self-dual, absolutely irreducible, $\FF _2 \GU _{2m } (\FF _4) $-modules
of dimension 
$$d_k :=\dim (Y_k)  = 2 {{2m}\choose{k}} (0\leq k \leq m-1)
\mbox{ and }  d_m := \dim (Y_m) = {{2m}\choose{m}} .$$ 
\end{remark}

\subsection{The action of $\sigma $ on $W_k$}

The element $\sigma $ from Section \ref{not} is an element of 
$\GL_m(\FF _4) \leq \Aff ({\mathcal V}_{\FF _4}) $. 
The natural $U_{2m} (\FF _4)$-module then can be realized as 
$\omega $-eigenspace of $\eta $ on the natural $O_{4m}(\FF _2)$-module
and $\GL({\mathcal V}_{\FF _4} )$ is the stabilizer in 
$U_{2m}(\FF _4)$ of a maximal isotropic subspace. 
More precisely 
 we have the embedding 
$$\GL({\mathcal V}_{\FF _4}) \to U_{2m}(\FF _4) , g\mapsto \diag (g, (g^{[2]} )^{-1} ) $$
where $g^{[2]}$ is the matrix obtained by applying the Frobenius 
automorphism $x\mapsto x^2$ to all entries of $g$. 
So by Remark \ref{eta} the eigenvalues of $\sigma $ on 
the natural $U_{2m}(\FF _4)$-module $W$ are 
$$\zeta ,\zeta ^4 ,\ldots , \zeta ^{4^{m-1}} , 
\zeta ^{-2} , \zeta ^{-8} ,\ldots , \zeta ^{-2^{2m-1}}  $$
and the determinant of $\sigma $ on $W$ is $\omega \omega ^{-2} = \omega ^{-1} $
as  $\omega = \zeta \zeta^4 \cdots  \zeta ^{4^{m-1}} = \zeta ^{(4^m-1)/3} .$ 

\begin{lemma} \label{eigen}
For $0\leq k \leq 2m$ 
the  eigenvalues of $\sigma \in U_{2m}(\FF _4)$ on  $W_k$ are  the elements of 
$$\{ \omega ^{-1} \zeta ^{\sum _{i\in I}  (-2) ^{i} }  \mid I\subset \{ 0,\ldots 2m-1 \} , 
|I| = k \} .$$
\end{lemma}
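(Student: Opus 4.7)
The plan is to combine three inputs already supplied in the discussion immediately before the lemma. First, the eigenvalues of $\sigma$ on the natural $U_{2m}(\FF_4)$-module $W$ are listed there as $\zeta, \zeta^4, \ldots, \zeta^{4^{m-1}}, \zeta^{-2}, \zeta^{-8}, \ldots, \zeta^{-2^{2m-1}}$. These are precisely the numbers $\zeta^{(-2)^i}$ for $0 \le i \le 2m-1$: for even $i = 2j$ one has $(-2)^i = 4^j$, and for odd $i = 2j+1$ one has $(-2)^i = -2 \cdot 4^j = -2^{2j+1}$, so the two lists coincide.

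Second, I would diagonalise $\sigma$ on $W \otimes_{\FF_4} \overline{\FF_2}$ with an eigenbasis $v_0, \ldots, v_{2m-1}$ satisfying $\sigma v_i = \zeta^{(-2)^i} v_i$. The standard description of exterior powers then furnishes an eigenbasis $\{ v_{i_1} \wedge \cdots \wedge v_{i_k} \}$ of $\Lambda^k(W)$ indexed by the $k$-element subsets $I = \{ i_1 < \cdots < i_k \} \subseteq \{0, \ldots, 2m-1\}$, with corresponding eigenvalues $\zeta^{\sum_{i \in I} (-2)^i}$.

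Third, the one-dimensional representation $\det$ sends $\sigma$ to $\det(\sigma|_W) = \omega \cdot \omega^{-2} = \omega^{-1}$, as already computed in the paragraph preceding the lemma (using $\omega = \zeta^{(4^m-1)/3}$ and $\omega^3 = 1$). Tensoring $\Lambda^k(W)$ with $\det$ multiplies every eigenvalue by $\omega^{-1}$, which yields exactly the set claimed in the lemma. No real obstacle arises; the entire content of the argument is the bookkeeping that repackages the known eigenvalues of $\sigma$ on $W$ as powers of $-2$ and then tracks the scalar twist by $\det(\sigma) = \omega^{-1}$.
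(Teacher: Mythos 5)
Your proposal is correct and follows essentially the same route as the paper's proof: diagonalise $\sigma$ on $W$ over an extension field with eigenvalues $\zeta^{(-2)^i}$, pass to the exterior power via wedge products of eigenvectors, and twist by $\det(\sigma) = \omega^{-1}$. Your write-up is slightly more explicit than the paper's in spelling out the identification of the listed eigenvalues with the powers $\zeta^{(-2)^i}$, but the argument is the same.
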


\begin{proof}
Fix a basis $(e_j : j \in \{ 0,\ldots , 2m-1 \} )$  of eigenvectors of $\sigma $ of the extension  to $\FF _{4^m} $ 
of $W$ so that $\sigma (e_j ) = \zeta ^{(-2)^j} e_j$. 
Then the exterior products 
$$\{ 
e_{i_1} \wedge \ldots \wedge e_{i_k} \mid 1\leq i_1 < \ldots < i_k \leq 2m \} $$ form an eigenvector basis of $W_k$ 
where the eigenvalue of $\sigma $ on 
$ e_{i_1} \wedge \ldots \wedge e_{i_k}$ 
 is  $ \omega ^{-1} \zeta ^{\sum _{j=1}^k (-2)^{i_j} } $.
\end{proof}

To distinguish between the two spin representations we compare 
$2$-weights of the exponents of the eigenvalues of $\sigma $ 
as defined in Notation \ref{wt2}.

\begin{lemma}\label{EO}
For $I\subseteq \{ 1,\ldots , 2m \} $ with $|I| = k$ let 
$0\leq u<  2^{2m}-1 $ be such that 
 $$\zeta ^u = \omega ^{-1} \zeta ^{\sum _{i\in I}  (-2) ^{i} } .$$
 Then $O(u) - E(u) = m-k $. 
 In particular the $\wt_2(u)$ is 
even if and only if $m-k$ is even. 
\end{lemma}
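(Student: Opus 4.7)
The plan is to compute the binary expansion of $u$ explicitly in terms of $I$, and read off both the $O(u) - E(u)$ value and the parity of $\wt_2(u)$ from the result.

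First I would translate $\omega^{-1}$ into the additive language of exponents of $\zeta$. Since $\omega = \zeta^{(4^m-1)/3}$, we have $\omega^{-1} = \zeta^{-(4^m-1)/3}$, and modulo $4^m-1$ the exponent $-(4^m-1)/3$ equals $2(4^m-1)/3 = \sum_{i=0}^{m-1} 2^{2i+1}$. Call this integer $v$. Its binary expansion has a $1$ in every odd position and a $0$ in every even position, so $O(v) = m$, $E(v) = 0$.

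Next I would split $I$ into $A := I \cap \{0,2,\ldots,2m-2\}$ and $B := I \cap \{1,3,\ldots,2m-1\}$. Since $(-2)^i = 2^i$ for $i$ even and $(-2)^i = -2^i$ for $i$ odd, we get
\[
\sum_{i\in I}(-2)^i \;=\; \sum_{i\in A} 2^i \;-\; \sum_{i\in B} 2^i,
\]
and therefore $u \equiv v + \sum_{i\in A} 2^i - \sum_{i\in B} 2^i \pmod{4^m-1}$. The key observation is that adding $\sum_{i\in A}2^i$ to $v$ introduces no carries (every position $i \in A$ is even, where $v$ has bit $0$) and subtracting $\sum_{i\in B}2^i$ introduces no borrows (every position $i \in B$ is odd, where the current bit is $1$). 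Hence the result is already a genuine binary representation with bits
\[
u_i \;=\; \begin{cases} 1 & i \text{ even and } i\in A,\\ 0 & i \text{ even and } i\notin A,\\ 0 & i \text{ odd and } i\in B,\\ 1 & i \text{ odd and } i\notin B. \end{cases}
\]

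From this description $E(u) = |A|$ and $O(u) = m - |B|$, so
\[
O(u) - E(u) = (m - |B|) - |A| = m - k,
\]
proving the first assertion. For the parity claim, $\wt_2(u) = |A| + (m - |B|)$, so $\wt_2(u) - (m-k) = 2|A|$ is even, giving $\wt_2(u) \equiv m-k \pmod 2$ as required.

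The only delicate point is the edge case where the integer we compute equals $4^m - 1$ rather than lying in $[0, 4^m-1)$; this happens precisely when all bits are $1$, i.e.\ $A = \{0,2,\ldots,2m-2\}$ and $B = \emptyset$, which forces $k = m$ and $I$ equals the set of even indices. In the prescribed range we then take $u = 0$, for which $O(u) - E(u) = 0 = m-k$, so the conclusion still holds; the symmetric case $I = $ set of odd indices already lands on $u=0$ directly. Verifying that the described addition and subtraction never propagate a carry or borrow is the main technical step, but it is immediate from the alternating pattern of $v$ and the fact that $A$ and $B$ touch disjoint parity classes.
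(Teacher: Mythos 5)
Your proof is correct and follows essentially the same route as the paper's: both arguments compute the binary digits of $u$ by tracking the even and odd positions separately, the paper by writing the exponent with digits in $\{0,-1\}$ and then shifting by $2^{2m}-1=\sum_{i=0}^{2m-1}2^i$, and you by taking the positive representative $2(4^m-1)/3$ of $\omega^{-1}$ together with a no-carry/no-borrow observation, and both land on the same bitwise description of $u$ and the identity $O(u)-E(u)=m-k$. You are slightly more careful in flagging the degenerate case where the computed integer is $4^m-1$ and must be reduced to $u=0$, a point the paper passes over silently (harmlessly, since $O-E$ vanishes on both).
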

 
\begin{proof}
	We have 
	$$\omega ^{-1} \zeta ^{\sum _{i\in I}  (-2) ^{i} } = 
	\zeta ^b \mbox{ with } b = \sum_{i=0}^{2m-1} b_i 2^i  \mbox{ and } 
	b_i \in \{ 0 , -1 \} $$ 
	such that $b_i = -1 $ if and only if either $i\in I$ is odd 
	or $i \not\in I$ and $i$ is even. 
	As $\zeta ^{2^{2m}-1}  = 1$ and $2^{2m}-1 = \sum _{i=0}^{2m-1}2^i$ 
	we may multiply $\zeta ^b$ by $\zeta ^{2^{2m}-1}  = 1$ to 
	obtain $\zeta ^b = \zeta ^a$ with 
  $a=\sum_{i=0}^{2m-1} a_i 2^i $ such that  $a_i = 1+b_i \in \{0,1 \}$.
  Then $E(a) = | \{ i \in I \mid i\mbox{ even } \} | $ and 
  $O(a) = |\{ i \in \{0,\ldots , 2m-1\} \setminus I \mid i \mbox{ odd } \} |$.
  In particular $O(a)-E(a)  $ equals the number of odd numbers in $\{ 0,\ldots , 2m-1 \}$ minus the cardinality of $I$, so $O(a) - E(a) = m-k $.
\end{proof}

\begin{corollary}\label{eigenvalsU}
	The eigenvalues of $\sigma $ on 
	$Y_k$ are exactly  the elements of $\Theta _k$ 
	from Notation \ref{wt2}. 
	We have 
 $1\in \Theta _k$ if and only if $k=m$, and then
	the eigenvalue $1$ of $\sigma $ occurs twice in $Y_m$.
\end{corollary}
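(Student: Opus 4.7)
My plan is to read off the eigenvalues of $\sigma$ on $Y_k$ by restricting to $U_{2m}(\FF_4)$, where Remark \ref{Uk} identifies $(Y_k)_{|U_{2m}(\FF_4)}$ with $W_k \oplus W_{2m-k}$ (or $W_m$ when $k=m$), and then apply Lemmas \ref{eigen} and \ref{EO} to each summand. Since $\sigma \in \GL({\mathcal V}_{\FF_4}) \leq U_{2m}(\FF_4)$, the spectrum of $\sigma$ on $Y_k$ coincides with its spectrum on this restriction.

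First, I would package Lemmas \ref{eigen} and \ref{EO} together: the eigenvalues of $\sigma$ on $W_j$ are $\omega^{-1}\zeta^{\sum_{i\in I}(-2)^i}$ as $I$ ranges over the $j$-subsets of $\{0,\ldots,2m-1\}$, and Lemma \ref{EO} rewrites each such value uniquely as $\zeta^a$ with $a \in \{0,\ldots,4^m-1\}$ satisfying $O(a)-E(a) = m-j$. The explicit rule produced in that proof (namely, $a_i = 1$ iff either $i\in I$ and $i$ is even, or $i \notin I$ and $i$ is odd) is manifestly invertible, so it furnishes a bijection between the $j$-subsets $I$ and the set $X_j := \{a : O(a)-E(a) = m-j\}$ of cardinality $\binom{2m}{j}$ appearing in the proof of Lemma \ref{cardTh}(b).

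For $0 \leq k < m$, applying this with $j=k$ and $j=2m-k$ shows that the eigenvalues of $\sigma$ on $Y_k$ form the set $\{\zeta^a : a \in X_k \cup X_{2m-k}\}$, which by definition equals $\Theta_k$. Because $m-k \neq 0$, neither $a=0$ nor $a=2^{2m}-1$ lies in $X_k \cup X_{2m-k}$, so distinct $a$'s yield distinct $\zeta^a$'s; every eigenvalue therefore has multiplicity one, and $1 \notin \Theta_k$ as claimed.

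The only real obstacle is the case $k=m$: here $Y_m = W_m$ contributes $\binom{2m}{m}$ eigenvalues $\zeta^a$ indexed by $a \in X_m = \{a : O(a) = E(a)\}$. The sole collision among these $\zeta^a$ is between $a=0$ and $a=2^{2m}-1$, which under the bijection above correspond to the two $m$-subsets $I = \{1,3,\ldots,2m-1\}$ and $I = \{0,2,\ldots,2m-2\}$; both yield the eigenvalue $1$. Hence $|\Theta_m| = \binom{2m}{m} - 1$ and the eigenvalue $1$ appears with multiplicity exactly $2$ in $Y_m$, while every other element of $\Theta_m$ appears with multiplicity one. This identification of the two $m$-subsets producing the eigenvalue $1$ is the only step requiring any care; the rest is a direct translation via the bijection with $X_j$.
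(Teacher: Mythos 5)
Your proof is correct and follows the route the paper intends: the corollary is stated without an explicit proof as a direct consequence of Remark~\ref{Uk} together with Lemmas~\ref{eigen} and~\ref{EO}, and you have carried out exactly that deduction. In particular, your explicit identification of the two $m$-subsets $\{1,3,\ldots,2m-1\}$ and $\{0,2,\ldots,2m-2\}$ as the source of the double eigenvalue $1$ in $Y_m$ (via the collision $\zeta^0=\zeta^{4^m-1}$) correctly fills in the one step the paper leaves implicit.
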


Comparing the eigenvalues of $\sigma $ on $V_+$ and $V_-$ with
the ones obtained in Proposition \ref{eigenquot} we find 

\begin{corollary} \label{evenoddoddeven}
	If $m$ is even 
	then $\BW_{2m}/2\BW^{\# } _{2m} \cong V_+$ 
	and $\BW^{\# } _{2m} /\BW_{2m}  \cong V_-$.
\\
	If $m$ is odd
	then $\BW^{\# } _{2m} /\BW_{2m}  \cong V_+$ 
	and $\BW_{2m}/2\BW^{\# } _{2m} \cong V_-$.
\end{corollary}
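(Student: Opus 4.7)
My plan is as follows. First, both $\BW_{2m}/2\BW_{2m}^{\#}$ and $\BW_{2m}^{\#}/\BW_{2m}$ are simple $\FF_2{\mathcal G}_{2m}$-modules of dimension $2^{2m-1}$, so each is isomorphic to exactly one of the two spin representations $V_+$, $V_-$. I will pin down which by restricting to $\GU_{2m}(\FF_4) \leq {\mathcal U}_m$ and comparing the $\sigma$-spectra on both sides; since $V_+$ and $V_-$ are the only two simple $\FF_2{\mathcal G}_{2m}$-modules of this dimension, agreement of spectra forces the isomorphism.

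To compute the $\GU_{2m}(\FF_4)$-restriction of $V_\pm$, I start from Corollary \ref{decomp}, which gives $V_+|_{U_{2m}(\FF_4)} \cong \bigoplus_{k=0}^m W_{2k}$ and $V_-|_{U_{2m}(\FF_4)} \cong \bigoplus_{k=1}^m W_{2k-1}$. The Galois involution in $\GU_{2m}(\FF_4)$ swaps $W_j$ with $W_{2m-j}$ for $j \neq m$ and fixes $W_m$, so Remark \ref{Uk} lets me glue pairs $(W_j, W_{2m-j})$ that lie in the same $V_\epsilon$ into copies of $Y_j$, while $W_m$ contributes $Y_m$ to whichever of $V_+$, $V_-$ it belongs to. A parity split then yields $V_+|_{\GU_{2m}(\FF_4)} \cong \bigoplus_{j \in M_+} Y_j$ and $V_-|_{\GU_{2m}(\FF_4)} \cong \bigoplus_{j \in M_-} Y_j$ when $m$ is even, with the roles swapped when $m$ is odd. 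Corollary \ref{eigenvalsU} identifies the $\sigma$-spectrum on $Y_j$ with $\Theta_j$, and the parity congruence $m - j \equiv |O(u) - E(u)| \equiv \wt_2(u) \pmod 2$ yields $\bigcup_{j \in M_+} \Theta_j = \Theta^{(+)}$ and $\bigcup_{j \in M_-} \Theta_j = \Theta^{(-)}$. Comparing with Proposition \ref{eigenquot}, which gives $\sigma$ with spectrum $\Theta^{(+)}$ on $\BW_{2m}/2\BW_{2m}^{\#}$ and $\Theta^{(-)}$ on $\BW_{2m}^{\#}/\BW_{2m}$, then closes the argument.

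The one delicate point is the module $Y_m$: it restricts to the single summand $W_m$ (not a pair) and contributes the eigenvalue $1$ of $\sigma$ with multiplicity two. This is exactly the piece that drives the parity split of the statement, because $W_m$ belongs to $V_+$ iff $m$ is even. On the Barnes-Wall side the matching double eigenvalue $1$ is produced by the two $\sigma$-fixed one-dimensional subquotients $\cR(0,2m)$ and $\cR(2m,2m)/\cR(2m-1,2m)$ in the decomposition of $\BW_{2m}/2\BW_{2m}^{\#}$ from Proposition \ref{eigenquot}. Keeping this multiplicity-two eigenvalue correctly assigned as $m$ changes parity is the main bookkeeping subtlety; once that is settled, everything else reduces to a direct comparison of spectra.
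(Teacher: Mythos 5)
Your proposal is correct and follows essentially the same route as the paper: the paper also establishes the corollary by restricting $V_\pm$ to $\GU_{2m}(\FF_4)$ via Corollary \ref{decomp} and Remark \ref{Uk}, reading off the $\sigma$-spectrum from Corollary \ref{eigenvalsU} and Lemma \ref{EO}, and matching it against the spectrum from Proposition \ref{eigenquot}. You usefully make explicit the preliminary step that both Barnes-Wall quotients are simple $\FF_2{\mathcal G}_{2m}$-modules (hence each must be one of $V_+$, $V_-$), which the paper leaves implicit, and your remark on the role of $Y_m$ and the doubled eigenvalue $1$ correctly isolates the feature that drives the parity dichotomy.
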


\section{The ${\mathcal U}_m$ invariant sandwiched lattices} 

\subsection{The ${\mathcal U}_m$ invariant sandwiched lattices}

The results of the previous  section (in particular Corollary \ref{decomp} 
in combination with Remark \ref{Uk}) 
can be summarized to find all lattices 
$\Lambda \in {\mathcal L}_{-} $  and $\Gamma \in {\mathcal L}_{+}$
invariant under ${\mathcal U}_m = 2_+^{1+4m}.\GU_{2m}(\FF _4) $ 
where ${\mathcal L}_{-}$ and $ {\mathcal L}_{+}$ are as in 
Definition \ref{sigmasandwich}.
Note that the lattices $\Gamma $ are even lattices whereas 
only $\sqrt{2} \Lambda $ is even. 
Recall 
from Remark \ref{Uk} that $d_k$ denotes the dimension of 
the absolutely irreducible ${\mathcal U}_m$-module $Y_k$.

\begin{theorem} \label{latuni}
(a) 
$$ \BW _{2m} / 2\BW^{\#} _{2m} \cong \bigoplus _{k\in M_+} Y_{k} $$ 
as an $\FF _2 \GU_{2m} (\FF _4)$ module. 
The ${\mathcal U}_m $ invariant lattices $\Gamma \in {\mathcal L}_{+} $ 
are in bijection with the subsets $J\subseteq M_+$, 
such that $\Gamma _J /2\BW _{2m}^{\#}  \cong \bigoplus _{k \in J } Y_{k }$ 
and satisfy $2\Gamma _{J}^{\#} = \Gamma _{M_+ \setminus J}$. 
The discriminant group is 
$$\Gamma _J^{\#} / \Gamma _J \cong 
(\ZZ/2\ZZ )^{2^{2m-1}} \oplus (\ZZ/4\ZZ)^{\sum_{k\in M_+\setminus J} d_{k } } .$$
(b) 
$$ \BW _{2m}^{\#} / \BW _{2m} \cong \bigoplus _{k\in M_-} Y_{k} $$ 
as an $\FF _2 \GU_{2m} (\FF _4)$ module. 
The ${\mathcal U}_m $ invariant lattices $\Lambda \in {\mathcal L}_{-} $
are in bijection with the subsets $I\subseteq M_-$, 
such that $\Lambda _I /\BW _{2m} \cong \bigoplus _{k \in I } Y_{k}$ 
and satisfy $\Lambda _{I}^{\#} = \Lambda _{M_-\setminus I}$.
$\sqrt{2}\Lambda _I$ is an even lattice with discriminant group  
$$(\sqrt{2} \Lambda _I)^{\#}/(\sqrt{2} \Lambda _I)  \cong 
(\ZZ/2\ZZ )^{2^{2m-1}} \oplus (\ZZ/4\ZZ)^{\sum_{k\in M_-\setminus I} d_{k } } .$$
\end{theorem}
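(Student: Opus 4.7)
The plan is to combine the module-theoretic results of Corollary~\ref{decomp}, Remark~\ref{Uk}, and Corollary~\ref{evenoddoddeven} with a duality argument based on a $\mathcal{U}_m$-invariant $\FF_2$-valued pairing.

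First I would establish the isomorphisms $\BW_{2m}/2\BW_{2m}^{\#}\cong\bigoplus_{k\in M_+}Y_k$ and $\BW_{2m}^{\#}/\BW_{2m}\cong\bigoplus_{k\in M_-}Y_k$ as $\FF_2\GU_{2m}(\FF_4)$-modules. The $k$-indices arising in $V_+|_{\GU_{2m}(\FF_4)}$ via the pairing $Y_k=W_k\oplus W_{2m-k}$ ($k<m$) and $Y_m=W_m$ of Remark~\ref{Uk} are precisely those with $m-k$ even, while $V_-|_{\GU_{2m}(\FF_4)}$ gives those with $m-k$ odd. A short case analysis on the parity of $m$ in Corollary~\ref{evenoddoddeven} then pairs $V_+$ with whichever of the two Barnes--Wall quotients carries the labels in $M_+$, independently of this parity. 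The bijections $J\leftrightarrow\Gamma_J$ and $I\leftrightarrow\Lambda_I$ then follow because the $Y_k$ are pairwise non-isomorphic absolutely irreducible self-dual modules; admissibility in $\mathcal{L}_+$ is automatic, since by Corollary~\ref{eigenvalsU} the eigenvalue $1$ of $\sigma$ lies only in $Y_m$ and there with multiplicity $2$.

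For the duality $2\Gamma_J^{\#}=\Gamma_{M_+\setminus J}$, I would introduce the symmetric $\mathcal{U}_m$-invariant pairing $b(\overline{x},\overline{y}):=(x,y)+2\ZZ$ on $\BW_{2m}/2\BW_{2m}^{\#}$. It is well-defined (since $(\BW_{2m},2\BW_{2m}^{\#})\subseteq 2\ZZ$) and non-degenerate, because its radical equals $(2\BW_{2m}^{\#}\cap\BW_{2m})/2\BW_{2m}^{\#}=0$. By Schur's lemma, distinct self-dual simple summands are mutually $b$-orthogonal and $b$ restricts non-degenerately to each $Y_k$, so the $b$-orthogonal complement of $\bigoplus_{k\in J}Y_k$ is $\bigoplus_{k\in M_+\setminus J}Y_k$. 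Unpacking the definition of the dual lattice, this orthogonal complement is exactly $(2\Gamma_J^{\#})/2\BW_{2m}^{\#}$, yielding $2\Gamma_J^{\#}=\Gamma_{M_+\setminus J}$. The analogous argument on $\BW_{2m}^{\#}/\BW_{2m}$, using $2$-modularity to define $b'(\overline{x},\overline{y}):=2(x,y)+2\ZZ$, gives $\Lambda_I^{\#}=\Lambda_{M_-\setminus I}$; since $\sqrt{2}\BW_{2m}^{\#}$ is even, so is $\sqrt{2}\Lambda_I$.

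Finally, the discriminant group is handled by an order count followed by a multiplication-by-$2$ analysis. The chain $\Gamma_J\subseteq\BW_{2m}\subseteq\BW_{2m}^{\#}\subseteq\Gamma_J^{\#}=\tfrac{1}{2}\Gamma_{M_+\setminus J}$ shows $|\Gamma_J^{\#}/\Gamma_J|=2^{2^{2m-1}+2\sum_{k\in M_+\setminus J}d_k}$. The image of multiplication by $2$ on $\Gamma_J^{\#}/\Gamma_J$ equals $(2\Gamma_J^{\#}+\Gamma_J)/\Gamma_J=(\Gamma_{M_+\setminus J}+\Gamma_J)/\Gamma_J=\BW_{2m}/\Gamma_J$, which has order $2^{\sum_{k\in M_+\setminus J}d_k}$; this pins down exactly that many $\ZZ/4\ZZ$-summands, and the total order then forces the remaining $2^{2m-1}$ factors to be copies of $\ZZ/2\ZZ$. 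Part~(b) is identical after rescaling by $\sqrt{2}$. The main subtlety to watch out for is the parity bookkeeping in matching $V_+$/$V_-$ with $M_+$/$M_-$; once this is correctly aligned, the rest of the proof is a routine blend of Schur's lemma and lattice index arithmetic.
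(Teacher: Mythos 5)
Your proposal is correct and follows essentially the same route as the paper: establish the multiplicity-free decomposition $\bigoplus_{k\in M_\pm} Y_k$ of the two Barnes--Wall quotients via Corollary~\ref{decomp}, Remark~\ref{Uk} and Corollary~\ref{evenoddoddeven}, parametrize the $\mathcal{U}_m$-invariant sublattices by subsets of the (pairwise non-isomorphic, self-dual) summands, obtain the duality relations from orthogonality, and then pin down the discriminant group by an order count together with the image of multiplication by $2$. The one spot to tighten is the claim that the indices occurring in $V_+|_{\GU_{2m}(\FF_4)}$ are those with $m-k$ even --- in fact $V_+$ contributes the \emph{even} $k$, which agrees with $m-k$ even only when $m$ is even --- but your subsequent parity case analysis via Corollary~\ref{evenoddoddeven} correctly resolves this; otherwise your explicit $\FF_2$-valued pairing makes the duality step cleaner than the paper's terser ``Figure~1'' argument, while landing in the same place.
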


\begin{figure}
  \centering
    \def\svgscale{0.5}
      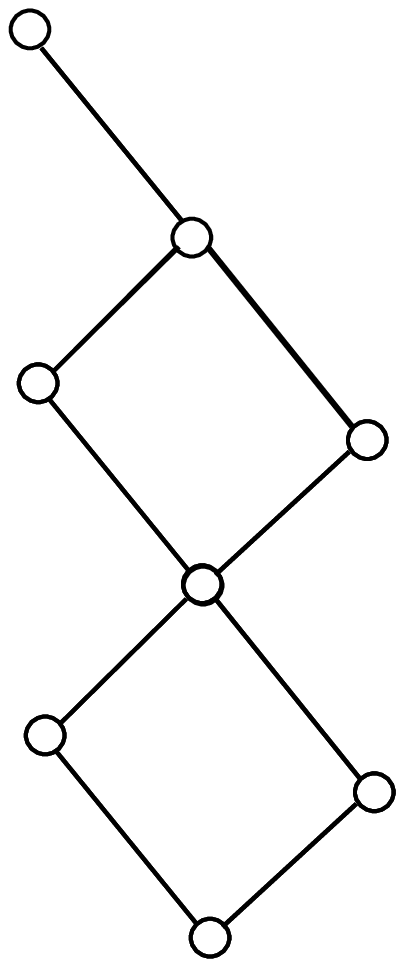
      \caption{Duality}
\end{figure}

\begin{proof}
	The module structure of the quotients of the two lattices
	follows from Corollaries \ref{decomp} and \ref{evenoddoddeven}. 
	To simplify notation we place ourselves into situation (a). 
	The ${\mathcal U}_m$ invariant lattices 
$\Gamma $ with                
$2\BW^{\#} _{2m} \subseteq \Gamma \subseteq \BW_{2m} $
are in bijection with the $\GU_{2m}(\FF _4) $ invariant 
submodules of 
$ \BW _{2m} / 2\BW^{\#} _{2m} = \bigoplus _{k\in M_+} Y_{k} $. 
As all the $Y_{k}$ are pairwise non-isomorphic simple 
$\FF_2 \GU_{2m}(\FF _4) $-modules, the invariant submodules 
correspond to subsets of  $M_+$.
As all the $Y_{k}$ are self-dual, so 
$$2\Gamma ^{\#} / 2\BW^{\#} _{2m} \cong \BW _{2m} / \Gamma $$
from which one gets the duality as illustrated in Figure 1.
Moreover $2\Gamma _J^{\#} \cap \Gamma _J = 2\BW^{\#} _{2m} $ and 
$2\Gamma _J^{\#} + \Gamma _J = \BW _{2m }$ 
implies that 
$$2 (\Gamma _J^{\#} / \Gamma _J) = \BW_{2m} /\Gamma _J 
\cong \bigoplus _{k\in M_+\setminus J } Y_{k}  .$$ 
Together with  
$$|\Gamma _J^{\#} / \Gamma _J | = |\BW_{2m}^{\#} /\BW_{2m} | 
\cdot |\BW_{2m} /\Gamma _J  | \cdot | \Gamma _J^{\#} /\BW_{2m}^{\# } | $$
we obtain the structure of the discriminant group. 
\\
Part (b) is proved with the same arguments. 
\end{proof}

\subsection{The automorphism group of the lattices $\Gamma _J$ and $\Lambda _I$} 
\begin{theorem}
	For all $\emptyset \neq J \subset M_+ $ 
	we have 
	$\Aut(\Gamma _J) = {\mathcal U}_m $. 
	\\
	For all $\emptyset \neq I \subset M_- $ 
	we have 
	$\Aut(\Lambda _I) = {\mathcal U}_m $. 
\end{theorem}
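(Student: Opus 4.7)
The inclusion $\mathcal U_m\subseteq \Aut(\Gamma_J)$ is built into Theorem~\ref{latuni}, and the analogous statement holds for $\Lambda_I$. For the reverse inclusion the plan is, first, to prove $\Aut(\Gamma_J)\le \mathcal G_{2m}$, and second, to invoke a maximality argument inside $\mathcal G_{2m}$. For the first step, Theorem~\ref{latuni}(a) gives $2\Gamma_J^{\#}=\Gamma_{M_+\setminus J}$, and combining the quotient descriptions $\Gamma_J/2\BW_{2m}^{\#}\cong\bigoplus_{k\in J}Y_k$ and $\Gamma_{M_+\setminus J}/2\BW_{2m}^{\#}\cong\bigoplus_{k\in M_+\setminus J}Y_k$ one obtains $\Gamma_J+2\Gamma_J^{\#}=\BW_{2m}$. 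Since $\Aut(L)=\Aut(L^{\#})$ for every lattice $L$, the group $\Aut(\Gamma_J)$ stabilises both summands and therefore their sum $\BW_{2m}$, which yields $\Aut(\Gamma_J)\le\Aut(\BW_{2m})=\mathcal G_{2m}$. The analogous identity $\Lambda_I+\Lambda_I^{\#}=\BW_{2m}^{\#}$ gives $\Aut(\Lambda_I)\le\mathcal G_{2m}$.

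For the second step, any $g\in\Aut(\Gamma_J)\le\mathcal G_{2m}$ stabilises the subspace $\Gamma_J/2\BW_{2m}^{\#}=\bigoplus_{k\in J}Y_k$ of $\BW_{2m}/2\BW_{2m}^{\#}$; the hypothesis $\emptyset\ne J\subsetneq M_+$ makes this a proper non-trivial submodule. Because $\BW_{2m}/2\BW_{2m}^{\#}$ is a spin representation of $\mathcal G_{2m}$ and hence absolutely irreducible, the stabiliser $S:=\Aut(\Gamma_J)$ is a proper subgroup of $\mathcal G_{2m}$ that contains $\mathcal U_m$. Invoking the maximality of $\mathcal U_m$ in $\mathcal G_{2m}$ --- equivalently, of $\GU_{2m}(\FF_4)$ in $O_{4m}^+(2)$, a standard Aschbacher class-$\mathcal C_3$ (field-extension) subgroup --- then forces $S=\mathcal U_m$. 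The same argument with $\BW_{2m}^{\#}/\BW_{2m}$ replacing $\BW_{2m}/2\BW_{2m}^{\#}$ handles $\Lambda_I$.

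The main obstacle is justifying the maximality of $\GU_{2m}(\FF_4)$ in $O_{4m}^+(2)$, which one would either cite from the Kleidman--Liebeck classification or verify directly for any small values of $m$ falling outside the general theorem. A self-contained alternative exploits the pairwise distinctness of the dimensions $d_k=\dim Y_k$ for $k\in M_+$ (Remark~\ref{Uk}): any $g\in\mathcal G_{2m}$ stabilising $\Gamma_J$ must permute the isotypic $\mathcal U_m$-components of $\BW_{2m}/2\BW_{2m}^{\#}$, and distinctness of the dimensions forces $g$ to fix each $Y_k$ setwise; one then argues that the simultaneous stabiliser of this multiplicity-free decomposition is exactly $\mathcal U_m$, for instance by recovering the element $\eta$ (and hence the $\FF_4$-structure defining $\mathcal U_m$) from its eigenvalues on the individual $Y_k$.
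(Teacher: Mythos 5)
Your proof follows essentially the same route as the paper: deduce $\mathcal U_m\le\Aut(\Gamma_J)\le\mathcal G_{2m}$ from $\Gamma_J+2\Gamma_J^{\#}=\BW_{2m}$, rule out equality with $\mathcal G_{2m}$ via the irreducibility of the spin module, and finish with the maximality of $\mathcal U_m=2^{1+4m}_+.\GU_{2m}(\FF_4)$ in $\mathcal G_{2m}$ (the paper cites \cite[Theorem 3.12]{Wilson}, while you point to the Aschbacher class-$\mathcal C_3$ description; the paper uses $\Lambda_I\cap\Lambda_I^{\#}=\BW_{2m}$ where you use $\Lambda_I+\Lambda_I^{\#}=\BW_{2m}^{\#}$, but both pin down a lattice with automorphism group $\mathcal G_{2m}$). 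The ``self-contained alternative'' you sketch at the end is not actually needed and is incomplete as stated: the pairwise distinctness of the $d_k$ on $M_+$ (resp.\ $M_-$) is not obvious (note $d_m<d_{m-1}$, so one would have to rule out collisions with smaller indices), and the step ``recovering $\eta$ from its eigenvalues on the $Y_k$'' is left entirely to the reader, so it does not replace the maximality citation.
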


\begin{proof}
Let $J$ be a proper subset of $M_+$.
Then $\Gamma _J + 2 \Gamma _J^{\#} = \BW_{2m} $, so by construction
$$ {\mathcal U}_m \leq  \Aut(\Gamma _J) \leq \Aut (\BW _{2m} ) = {\mathcal G}_{2m}.$$
Moreover  
$\Aut (\Gamma _J) \neq {\mathcal G}_{2m} $ because 
$\BW_{2m}/2\BW_{2m}^{\#} $ is a simple ${\mathcal G}_{2m}$-module. 
As $\Gamma U_{2m} (\FF _4) $ is a maximal subgroup 
of $O_{4m}^+(2)$ (see for instance \cite[Theorem 3.12]{Wilson})  
also ${\mathcal U}_m$ is a maximal subgroup of ${\mathcal G}_{2m} $
so ${\mathcal U}_m =  \Aut(\Gamma _J) $.
The statement for $\Lambda _I$ is proved similarly as 
$\Lambda _I \cap \Lambda _I^{\#} = \BW_{2m} $.
\end{proof}

\subsection{Construction $\Dcyc $ for
the lattices $\Gamma _J$ and $\Lambda _I$} \label{last}

In this section we show that 
the lattices $\Gamma _J$ and $\Lambda _I$ from Theorem \ref{latuni}
coincide with the lattices ${\mathcal L}(\widehat{(\cC _{\star J} )}) $
and ${\mathcal L}(\widehat{(\cC _{\star I} )}) $
from Section \ref{unitarysand}. 

\begin{theorem}\label{GammaequalsL}
\begin{itemize}
\item[(a)] 
For $\emptyset \neq J\subset M_+ $ the lattice 
$\Gamma_J$ from Theorem \ref{latuni} is given by
$$\Gamma _J  = \begin{cases} 
	2 {\mathcal L}(\widehat{(\cC _ {\star J})} ) & m\not\in J  \\
	{\mathcal L}(\widehat{(\cC _ {\star J})} ) & m\in J.
\end{cases} .$$ 
In particular if 
$\{m,m-2\} \cap J = \emptyset $, then 
$\min (\Gamma _J) = 2^{m+1} = \min (2\BW_{2m}^{\#} ) $. 
\item[(b)] 
For $\emptyset \neq I\subset M_- $ the lattice 
$\Lambda _I$ from Theorem \ref{latuni} is given by
$$\Lambda _I  = {\mathcal L}(\widehat{(\cC _{\star I})} ) .$$ 
In particular if 
$m-1 \not\in  I  $, then 
$\min (\Lambda _I) = \min (\BW _{2m} )  = 2^{m} $. 
\end{itemize}
\end{theorem}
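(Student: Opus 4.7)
The proof plan is to use the classification of $\sigma$-invariant admissible sandwiched lattices given in Proposition \ref{labellat}: each such lattice is uniquely determined by the Frobenius-invariant multiset of eigenvalues of $\sigma$ on $\Gamma/2\BW_{2m}^\#$ (resp.\ $\Lambda/\BW_{2m}$). Both sides of the claimed identities are already known to lie in $\cL_+$ or $\cL_-$, so the whole argument reduces to checking that the eigenvalues of $\sigma$ on the quotients agree; the minimum bounds are then read off from Theorem \ref{sandlatdetails}.

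For part (b), Theorem \ref{latuni}(b) gives $\Lambda_I/\BW_{2m} \cong \bigoplus_{k\in I} Y_k$ as $\FF_2[\sigma]$-modules, and since each $k\in I\subseteq M_-$ satisfies $k<m$, Corollary \ref{eigenvalsU} says the eigenvalues of $\sigma$ on $Y_k$ are exactly the elements of $\Theta_k$, each with multiplicity one. Thus the characteristic polynomial of $\sigma$ on $\Lambda_I/\BW_{2m}$ is $\prod_{s\in\bigcup_{k\in I}\Theta_k}(X-s)$. By Theorem \ref{sandlatdetails}(a), $\cL(\widehat{(\cC_{\star I})})\in\cL_-$ has exactly the same eigenvalues on its quotient modulo $\BW_{2m}$, so Proposition \ref{labellat}(b) forces $\Lambda_I=\cL(\widehat{(\cC_{\star I})})$. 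The last claim of Theorem \ref{sandlatdetails}(a) then yields $\min(\Lambda_I)=2^m$ whenever $m-1\notin I$.

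Part (a) is analogous via Proposition \ref{labellat}(a) but requires a case split according to whether $m\in J$, because this governs both admissibility and the scaling factor. If $m\in J$, then $Y_m$ appears in $\Gamma_J/2\BW_{2m}^\#$ and Corollary \ref{eigenvalsU} supplies the eigenvalue $1$ with multiplicity exactly $2$ (while $1\notin\Theta_k$ for the other summands), so admissibility holds; Theorem \ref{sandlatdetails}(b) matches the eigenvalue multiset and gives $\Gamma_J=\cL(\widehat{(\cC_{\star J})})$. If $m\notin J$, then $1$ is not an eigenvalue of $\sigma$ on $\Gamma_J/2\BW_{2m}^\#$; the chain $(\cC_{\star J})$ starts at $\cC(2,J,2m)$ rather than $\cC(0,J,2m)$, which forces an index shift by $2$, and Theorem \ref{sandlatdetails}(c) shows that $2\cL(\widehat{(\cC_{\star J})})\in\cL_+$ carries the correct eigenvalue set, yielding $\Gamma_J=2\cL(\widehat{(\cC_{\star J})})$. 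The minimum bound $\min(\Gamma_J)=2^{m+1}$ under $\{m,m-2\}\cap J=\emptyset$ is then the last assertion of Theorem \ref{sandlatdetails}(c) (noting that $m\notin J$ is automatic in this case).

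The main (relatively modest) obstacle is the bookkeeping around admissibility: one must verify that the multiplicity-two occurrence of the eigenvalue $1$ in $Y_m$ precisely matches the definition of admissibility in Definition \ref{sigmasandwich}, so that Proposition \ref{labellat}(a) can be applied without ambiguity and the scaling in the case $m\notin J$ is correctly tracked.
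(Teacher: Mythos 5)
Your proposal is correct and follows essentially the same route as the paper: both use Proposition \ref{labellat} to reduce the identification to matching the eigenvalue multiset of $\sigma$ on the relevant quotient, read the eigenvalues from Corollary \ref{eigenvalsU} on the $Y_k$ side and from Theorem \ref{sandlatdetails} on the Construction $\Dcyc$ side, and then pull the minimum bounds from Theorem \ref{sandlatdetails}. Your added attention to the admissibility check when $m\in J$ (the eigenvalue $1$ occurring with multiplicity exactly two via $Y_m$) is just a spelled-out version of the paper's brief remark invoking Corollary \ref{eigenvalsU}.
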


\begin{proof}
The lattices $\Lambda _I $ are clearly $\sigma $ invariant,
and hence in ${\mathcal L}_{-} $. 
Moreover by Corollary \ref{eigenvalsU}  all 
$\Gamma _J$ are admissible and hence  
in ${\mathcal L}_{+} $.
So we may use Proposition \ref{labellat} to identify the lattices. 
By Corollary \ref{eigenvalsU} the eigenvalues 
of $\sigma $ on 
$\Lambda _I/\BW _{2m} $ (respectively $\Gamma _J/2\BW_{2m}^{\#} $)
are exactly the elements of 
$\bigcup _{k\in I} \Theta _k $
respectively $\bigcup _{k\in J} \Theta _k $.
These coincide with the eigenvalues of $\sigma $ on 
${\mathcal L}(\widehat{(\cC _{\star I})} ) / \BW_{2m}  $,
${\mathcal L}(\widehat{(\cC _{\star J})} )/2\BW_{2m}^{\#} $ (if $m\in J$), 
respectively
$2{\mathcal L}(\widehat{(\cC _{\star J})} )/2\BW_{2m}^{\#} $ (if $m\not\in J$)
as given in Theorem \ref{sandlatdetails}.
\end{proof}

\begin{corollary}\label{SBW}
	Let $m\geq 3$.  
	\begin{itemize}
		\item[(a)]
	For $J_0:=M_+ \setminus \{ m,m-2 \} $ 
	the lattice $\Gamma _{J_0}$ has minimum $2^{m+1}$ 
	and discriminant group 
	$$\Gamma _{J_0}^{\#}/\Gamma _{J_0} \cong (\ZZ/2\ZZ )^{2^{2m-1}} \oplus 
	(\ZZ/4\ZZ) ^{{2m \choose m} + 2{2m \choose m-2}}.$$
	If $m=3$ then $J_0 = \emptyset $ so 
	$\Gamma _{J_0} = 2\BW_{2m}^{\#} $. 
		\item[(b)]
For $I_0:=M_-\setminus \{ m -1 \} $, the rescaled lattice
$\SBW_{2m} := \sqrt{2} \Lambda _{I_0}$ is an even lattice of 
minimum $2^{m+1}$ and 
discriminant group 
$$(\SBW_{2m})^{\#}/(\SBW_{2m}) \cong 
	(\ZZ/2\ZZ )^{2^{2m-1}} \oplus 
	(\ZZ/4\ZZ) ^{2{2m \choose m-1}}.$$
	\end{itemize}
\end{corollary}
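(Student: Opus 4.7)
The plan is to derive the corollary as a direct application of Theorems \ref{latuni} and \ref{GammaequalsL} to the specific choices $J = J_0$ and $I = I_0$, with only a bit of elementary bookkeeping on $M_+ \setminus J_0$ and $M_- \setminus I_0$.

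For part (a), since $J_0 := M_+ \setminus \{m, m-2\}$, by construction $\{m, m-2\} \cap J_0 = \emptyset$. The hypothesis $m \geq 3$ ensures $m-2 \geq 1$, and both $m$ and $m-2$ differ from $m$ by an even number so both lie in $M_+$; hence $M_+ \setminus J_0 = \{m, m-2\}$. Theorem \ref{GammaequalsL}(a) then immediately yields $\min(\Gamma_{J_0}) = 2^{m+1}$, and the discriminant group formula from Theorem \ref{latuni}(a), combined with $d_m = \binom{2m}{m}$ and $d_{m-2} = 2\binom{2m}{m-2}$ from Remark \ref{Uk}, gives the stated structure. For the boundary case $m = 3$ we have $M_+ = \{1, 3\} = \{m, m-2\}$, so $J_0 = \emptyset$; by the parametrization in Theorem \ref{latuni}(a) the lattice $\Gamma_\emptyset$ is characterized by $\Gamma_\emptyset / 2\BW_{2m}^{\#} = \{0\}$, forcing $\Gamma_\emptyset = 2\BW_{2m}^{\#}$.

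For part (b), since $m - (m-1) = 1$ is odd, $m-1 \in M_-$, and $I_0 := M_- \setminus \{m-1\}$ gives $M_- \setminus I_0 = \{m-1\}$; in particular $m-1 \notin I_0$. Theorem \ref{GammaequalsL}(b) then yields $\min(\Lambda_{I_0}) = \min(\BW_{2m}) = 2^m$, so scaling by $\sqrt{2}$ doubles the squared norms and produces $\min(\SBW_{2m}) = 2^{m+1}$. The evenness of $\SBW_{2m} = \sqrt{2} \Lambda_{I_0}$ and the form of its discriminant group are both part of Theorem \ref{latuni}(b) once we substitute $|M_- \setminus I_0| = 1$ with $d_{m-1} = 2\binom{2m}{m-1}$.

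The whole argument is bookkeeping on top of the preceding theorems; the only mildly delicate point is the parity asymmetry at $m = 3$, where $J_0$ collapses to $\emptyset$ (so $\Gamma_{J_0}$ degenerates to $2\BW_{2m}^{\#}$ and must be singled out) while $I_0 = \{0\}$ remains nonempty and no analogous degeneration occurs in part (b).
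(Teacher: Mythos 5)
Your proposal is correct and takes the same route the paper intends: Corollary \ref{SBW} is the direct specialization of Theorems \ref{latuni} and \ref{GammaequalsL} to $J=J_0$ and $I=I_0$, using $d_k=2\binom{2m}{k}$ for $k<m$ and $d_m=\binom{2m}{m}$ from Remark \ref{Uk}. One small point to tighten: for $m=3$ you invoke Theorem \ref{GammaequalsL}(a) to get $\min(\Gamma_{J_0})=2^{m+1}$, but that theorem's hypothesis requires $\emptyset\neq J$, which fails since $J_0=\emptyset$. You correctly identify $\Gamma_\emptyset=2\BW_{2m}^{\#}$ in that case, but should add the observation that $\min(2\BW_{2m}^{\#})=4\min(\BW_{2m}^{\#})=4\cdot 2^{m-1}=2^{m+1}$ (using the $2$-modularity of $\BW_{2m}$, i.e. $\min(\BW_{2m}^{\#})=\frac12\min(\BW_{2m})$) so that the stated minimum still holds when $m=3$; the discriminant-group claim for $m=3$ is already covered since Theorem \ref{latuni}(a) is stated for arbitrary $J\subseteq M_+$, including $J=\emptyset$.
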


	For $m\geq 3$ 
	the lattice $\SBW_{2m}$ 
	has the maximum density among the
	unitary invariant sandwiched lattices that we considered in this 
	paper. In particular these lattices are denser than the 
	Barnes-Wall lattices in the same dimension. 
	More precisely we compute the 2-adic logarithm 
	of the center density (as defined in \cite[Chapter 1, Formula (27)]{SPLAG}) 
	of $\SBW_{2m} $ as 
	$$ \log_2(\delta (\SBW_{2m})) = 
	(2m-3)2^{2m-2} - 2{2m \choose m-1} $$ 
	which we tabulate for the first few values of $m$ 
	$$\begin{array}{|r|rrrrrrrr|} 
		\hline 
			m & 3 & 4 &  5 & 6 & 7 & 8 & 9 & 10 \\
		\hline 
			\log_2(\delta (\SBW_{2m})) & 18 & 208 &
			1372 &
			7632 &
			39050 &
			190112 &
			895524 &
			4120528  \\
		\hline 
		\end{array} .$$
Though these lattices are denser than the 
Barnes-Wall lattices of the same dimension, 
they do  not improve on the asymptotic density of the 
Barnes-Wall lattices as given in \cite[Chapter 1, Formula (30)]{SPLAG}.

\section{Strongly perfect lattices} \label{usp}

The notion of strongly perfect lattices has been introduced by 
Boris Venkov (see \cite{Venkov} for a comprehensive introduction). 

\begin{definition}
	A lattice $L$ is {\em strongly perfect}, if 
	its minimal vectors form a spherical 4-design.
\end{definition}

One interest of strongly perfect lattices stems from the fact that 
they provide examples of locally densest lattices. 
Another point comes from the connection to Riemannian geometry: 
Recall that a lattice $L$ is called {\em universally strongly perfect},
if all non-empty layers $L_a:=\{ \ell \in L \mid (\ell, \ell ) = a \} $ 
form spherical 4-designs. 
It has been shown in \cite{Coulangeon} that 
 universally perfect lattices  achieve local minima of
Epstein's zeta function.

One method to show that a lattice is universally strongly perfect has
been used by Bachoc in \cite{Bachoc}, where she shows that all
layers of the Barnes-Wall lattices form spherical 6-designs. 

It is based on the following proposition, used in several places of the
relevant literature. 
\begin{proposition} (see e.g. \cite[Proposition 2.5]{LempkenTiep})  \label{invari}
	Let $G\leq O_n(\RR) $ be a finite subgroup of the compact 
	real orthogonal group. Assume that all $G$ invariant homogeneous
	polynomials of degree $\leq 4$ are also invariant under $O_n(\RR) $.
	Then all $G$-orbits in $\RR^n$ form spherical 4-designs. 
\end{proposition}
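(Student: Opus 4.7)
The plan is to use a Reynolds averaging argument together with the classical fact that the algebra of $O_n(\RR)$-invariant polynomials on $\RR^n$ is generated by the single invariant $\|x\|^2=x_1^2+\cdots +x_n^2$. Fix $0\neq v\in \RR^n$, let $X:=Gv$, set $r:=\|v\|$ and denote by $\mu_r$ the normalised $O_n(\RR)$-invariant surface measure on the sphere $S_r$ of radius $r$. The goal is to show that for every polynomial $p$ of degree $\leq 4$,
\[
\frac{1}{|X|}\sum_{x\in X}p(x)=\int_{S_r}p\,d\mu_r .
\]

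First I would rewrite the orbit sum as the value at $v$ of the Reynolds projector
\[
R(p)(x):=\frac{1}{|G|}\sum_{g\in G}p(gx).
\]
Accounting for the $|\mathrm{Stab}_G(v)|=|G|/|X|$ group elements carrying $v$ to each point of the orbit yields the identity $\tfrac{1}{|X|}\sum_{x\in X}p(x)=R(p)(v)$, and $R$ preserves the degree, so $R(p)$ is $G$-invariant of degree $\leq 4$. By hypothesis $R(p)$ is then also invariant under the full orthogonal group and, by the above invariant-theoretic fact, is therefore a polynomial in $\|x\|^2$ alone. In particular $R(p)$ is constant on $S_r$, with value $R(p)(v)$.

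For the other side of the asserted equality I would use that $\mu_r$ is $G$-invariant (as $G\leq O_n(\RR)$), which together with Fubini on the defining sum gives $\int_{S_r}p\,d\mu_r=\int_{S_r}R(p)\,d\mu_r$. Combining these two observations,
\[
\int_{S_r}p\,d\mu_r=\int_{S_r}R(p)\,d\mu_r=R(p)(v)=\frac{1}{|X|}\sum_{x\in X}p(x),
\]
which is exactly the defining property of a spherical $4$-design on $S_r$.

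The argument is soft and poses no real obstacle; the only inputs that must be kept honest are the first fundamental theorem for the orthogonal group that produces the generator $\|x\|^2$, and the trivial observation that $R$ is degree-preserving so the hypothesis actually applies in every degree $\leq 4$. The degenerate case $v=0$ gives the singleton orbit $\{0\}$ and is vacuous; all non-trivial $G$-orbits automatically lie on a sphere because $G\leq O_n(\RR)$.
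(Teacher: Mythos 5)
Your argument is correct, and it is the standard Reynolds-operator proof of this classical fact. The paper itself does not reprove the proposition (it is quoted from \cite[Proposition 2.5]{LempkenTiep}), so there is no in-paper proof to compare against; the averaging argument you give is the one the cited reference and the surrounding literature use. The only point worth tidying is the application of the hypothesis: the hypothesis is stated for \emph{homogeneous} $G$-invariants, while $R(p)$ need not be homogeneous, so one should first decompose $p=\sum_{d=0}^{4}p_d$ into homogeneous components and apply the hypothesis to each $R(p_d)$ separately before concluding $R(p)$ is a polynomial in $\|x\|^2$; your closing remark shows you are aware of this, but it deserves to be made explicit rather than left as an aside.
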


\begin{theorem}\label{BW'usp}
	All the lattices $\Gamma _{J}$ 
	and $\Lambda _I$ from Theorem \ref{latuni} 
	are universally strongly perfect.
\end{theorem}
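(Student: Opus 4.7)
The plan is to apply Proposition~\ref{invari} to the group $G = {\mathcal U}_m$ acting on $\RR^{4^m}$. By Theorem~\ref{latuni}, each $\Gamma_J$ and each $\Lambda_I$ is ${\mathcal U}_m$-invariant, so every non-empty layer of such a lattice is a finite union of ${\mathcal U}_m$-orbits on a sphere; once those orbits are known to be spherical $4$-designs, every layer inherits the $4$-design property and the lattice is universally strongly perfect. Hence it suffices to verify the hypothesis of Proposition~\ref{invari}, namely that every ${\mathcal U}_m$-invariant homogeneous polynomial of degree $d\le 4$ in the variables $(x_v)_{v\in{\mathcal V}}$ is a polynomial in the $O_{4^m}(\RR)$-invariant quadratic form $p_2 := \sum_{v\in{\mathcal V}} x_v^2$.

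As emphasised in the Introduction, ${\mathcal U}_m = {\mathcal C}_m(4^H_{\mathbf 1})$ is the genus-$m$ Clifford--Weil group attached, via the Nebe--Rains--Sloane construction, to the Type of Hermitian self-dual codes over $\FF_4$ that contain the all-ones vector $\mathbf 1$. Its ring of polynomial invariants is therefore spanned by the genus-$m$ complete weight enumerators of such codes, a code of length $n$ contributing invariants of degree $n$. The verification thus reduces to the finite classification of Hermitian self-dual codes $C\le \FF_4^n$ with $\mathbf 1 \in C$ for $n\le 4$ and the evaluation of their weight enumerators.

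Hermitian self-duality forces even length, so no such codes exist for $n=1$ or $n=3$, and the ${\mathcal U}_m$-invariants of odd degree $\le 4$ vanish. For $n=2$ the unique such code is the repetition code $\langle\mathbf 1\rangle=\{(a,a):a\in\FF_4\}$, and a direct expansion gives its genus-$m$ complete weight enumerator as $p_2$. For $n=4$ I would bring a generator matrix into the standard form $\begin{pmatrix}1 & 0 & a & b \\ 0 & 1 & c & d\end{pmatrix}$ and impose the three Hermitian self-duality equations $a^3+b^3 = c^3+d^3 = 1$ and $ac^2+bd^2 = 0$; since $x^3\in\{0,1\}$ in $\FF_4$, this forces exactly one of $\{a,b\}$ and one of $\{c,d\}$ to vanish, and the further requirement $\mathbf 1\in C$ pins down the remaining entries to~$1$. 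Hence every such code is permutation equivalent to $\langle\mathbf 1\rangle\oplus\langle\mathbf 1\rangle$, with complete weight enumerator $p_2\cdot p_2 = p_2^2$. Together these facts show that the ${\mathcal U}_m$-invariants of degree $\le 4$ lie in $\RR[p_2]$, and Proposition~\ref{invari} yields the theorem. The most delicate step is the length-$4$ case analysis; an equivalent alternative would be to read off the low-degree homogeneous dimensions directly from the Molien series of ${\mathcal C}_m(4^H_{\mathbf 1})$ recorded in the Nebe--Rains--Sloane framework.
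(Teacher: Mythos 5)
Your proposal is correct and follows essentially the same route as the paper: reduce via Proposition~\ref{invari} to checking that the low-degree ${\mathcal U}_m$-invariants lie in $\RR[p_2]$, identify ${\mathcal U}_m$ with the Clifford--Weil group ${\mathcal C}_m(4^H_{\mathbf 1})$ so that its degree-$d$ invariants are spanned by genus-$m$ complete weight enumerators of Hermitian self-dual $\FF_4$-codes of length $d$ containing $\mathbf 1$, and then observe that the only such codes in lengths $\le 4$ are $i_2$ and $i_2\perp i_2$, whose complete weight enumerators are $p_2$ and $p_2^2$. The paper simply invokes the known classification of these short codes, whereas you supply the elementary hand verification (and correctly note the vanishing of odd-degree invariants via the parity of the length), but the logical structure is identical.
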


\begin{proof}
	We show that the assumption of Proposition \ref{invari} holds 
	for ${\mathcal U}_m = 2^{1+4m}_+ . \GU_{2m}(\FF _4) \leq O_{2^{2m}}(\RR )$. 
Then the theorem follows as all layers of such invariant 
lattices are disjoint unions of 
${\mathcal U}_m$-orbits. 
To compute the invariant harmonic polynomials we use the fact that 
${\mathcal U}_m= {\mathcal C}_m(4^H_{\bf 1} )$ (see \cite[Proposition 7.3.1]{NebeRainsSloane}).
Therefore by \cite[Corollary 5.7.5]{NebeRainsSloane} the space of 
homogeneous invariants of ${\mathcal U}_m$ of degree $d$ is spanned 
by the genus $m$ complete weight enumerators of Hermitian 
self-dual codes $C= C^{\perp }\leq \FF_4^d$ of length $d$ 
containing the all ones vector. 
By the classification of these codes, there are up to 
coordinate permutation unique such codes
of lengths 2 and 4, the repetition code $i_2 = \langle (1,1) \rangle \leq \FF_4^2 $ 
and its orthogonal sum $i_2 \perp i_2 \leq \FF_4^4$.
The genus $m$ complete weight enumerator of $i_2$ is 
the $O_{2^{2m}}(\RR ) $ invariant quadratic form $q$ and 
the one of $i_2\perp i_2$ is $q^2$. 
So all invariants of ${\mathcal U}_m$ of degree $2$ and $4$ are also invariant under $O_{2^{2m}}(\RR )$.
As all layers of any ${\mathcal U}_m$ invariant lattice are disjoint unions of ${\mathcal U}_m$-orbits 
we conclude that all these layers form spherical 4-designs. So
all ${\mathcal U}_m$ invariant lattices are universally strongly perfect. 
\end{proof}

Note that this theorem also follows from
\cite[Theorem 1.3 (A2)]{Tiep}.

\section{Examples in small dimension}

\begin{figure}[!h]
  \centering
   \begin{minipage}{0.45\textwidth}
  \vspace*{0.8cm}
   \centering
     \def\svgscale{0.4}
    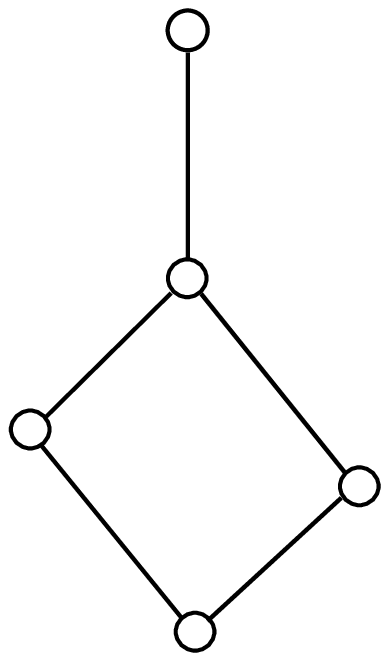
    \caption{$m=2$}
   \end{minipage}
     \begin{minipage}{0.45\textwidth}
  \centering
   \def\svgscale{0.4}
        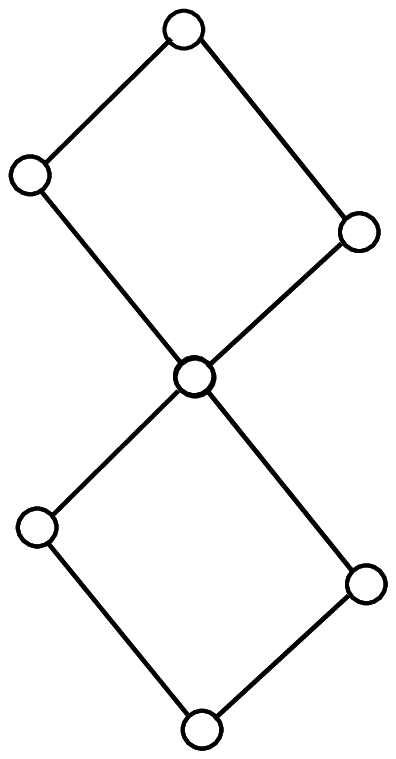
     \caption{$m=3$}
\end{minipage}
\end{figure}

In dimension 16 (so $m=2$) 
we find two new universally strongly perfect lattices:
$\Gamma _{\{ 2 \} }$ and its dual $\Gamma _{\{ 2 \}}^{\#} = \frac{1}{2} \Gamma _{\{ 0\} } $.
The discriminant groups are 
$$\Gamma _{\{ 2 \}}^{\#}/\Gamma _{\{ 2 \} } \cong 
\ZZ/2\ZZ ^8 \oplus \ZZ/4\ZZ ^2 \mbox{ and } 
\Gamma _{\{ 0 \}}^{\#}/\Gamma _{\{ 0 \} } \cong 
\ZZ/2\ZZ ^8 \oplus \ZZ/4\ZZ ^6 .$$ For the minimum we compute
$$\min (\Gamma _{\{ 2 \}} ) = \min (\BW _4) = 4, 
\min (\Gamma _{\{ 0 \} } ) = 6 $$  
so the Hermite function $\gamma $ with $\gamma (L) = \frac{\min(L)}{\det(L)^{1/\dim(L)}} $ rounded to 2 decimal places are 
	$$\gamma (\BW_4 ) \sim 2.83 , \ \gamma (\Gamma_{\{ 2 \} }) \sim 2.38  , \ 
	\gamma (\Gamma _{\{ 0 \} }) \sim 2.52 .$$
The kissing numbers are computed with Magma as
$$|\Min (\BW_4 ) | = 4320 , \ 
|\Min (\Gamma _{\{ 2 \}} ) | = 864 , \ 
|\Min (\Gamma _{\{ 0 \}} ) | = 1536  .$$

For dimension 64 (so $m=3$) we list the invariants of the lattices 
as computed with Magma in the following table:
$$\begin{array}{|r|r|r|r|r|}
	\hline 
	\mbox{ name } & \mbox{ smith } & \min & \mbox{ kissing } & \mbox{ Hermite }\\
	\hline
	\BW _6 & 1^{32} 2^{32} & 8 & 9,694,080  & 5.66  \\ 
	\Gamma _{\{3\}} &  1^{20} 2^{32} 4^{12} & 8 & 114,048 & 4.36 \\ 
	\Gamma _{\{1\}}  & 1^{12} 2^{32} 4^{20}  & 12 & 4,257,792 &  5.50\\
	\frac{1}{\sqrt{2}} \SBW_6 =\Lambda _{\{0\}} &  \frac{1}{2}^{2} 1^{32} 2^{30} & 8 &  9,694,080 & 5.91 \\
	\Lambda _{\{2 \}} &  \frac{1}{2}^{30} 1^{32} 2^2 & 4 & 2,395,008  & 5.42\\
	\hline 
\end{array} 
$$

\end{document}